\def\lto{{\longrightarrow}}
\def\xto{\xrightarrow}
\newcommand{\fg}{{\mathfrak g}}
\newcommand{\fh}{{\mathfrak h}}
\newcommand{\fm}{{\mathfrak m}}
\newcommand{\calm}{{\mathcal M}}
\newcommand{\calo}{{\mathcal O}}
\newcommand{\calos}{{\calo_S}}
\newcommand{\calox}{{\calo_X}}
\newcommand{\caloy}{{\calo_Y}}
\newcommand{\CC}{{\mathbb C}}
\newcommand{\HH}{{\mathbb H}}
\newcommand{\LL}{{\mathbb L}}
\newcommand{\ZZ}{{\mathbb Z}}
\newcommand{\vp}{\varphi}
\DeclareMathOperator{\adj}{adj}
\DeclareMathOperator{\ann}{ann}
\DeclareMathOperator{\codim}{codim}
\DeclareMathOperator{\coker}{coker}
\DeclareMathOperator{\depth}{depth}
\DeclareMathOperator{\Der}{Der}
\DeclareMathOperator{\End}{ End}
\DeclareMathOperator{\Fitt}{Fitt}
\DeclareMathOperator{\GL}{GL}
\DeclareMathOperator{\gl}{\mathfrak{gl}}
\DeclareMathOperator{\Hom}{Hom}
\DeclareMathOperator{\id}{id}
\DeclareMathOperator{\jac}{jac}
\DeclareMathOperator{\Jac}{Jac}
\DeclareMathOperator{\Orthog}{O}
\DeclareMathOperator{\projdim}{projdim}
\DeclareMathOperator{\Pf}{Pf}
\DeclareMathOperator{\PSL}{PSL}
\DeclareMathOperator{\rank}{rank}
\DeclareMathOperator{\Sing}{Sing}
\DeclareMathOperator{\Sk}{Sk}
\DeclareMathOperator{\SL}{SL}
\DeclareMathOperator{\sltwo}{\mathfrak{sl}_{2}}
\DeclareMathOperator{\sln}{\mathfrak{sl}_{n}}
\DeclareMathOperator{\SO}{SO}
\DeclareMathOperator{\Sp}{Sp}
\DeclareMathOperator{\supp}{ supp}
\DeclareMathOperator{\Sym}{{\mathbb S}ym}
\DeclareMathOperator{\Symm}{Symm}
\DeclareMathOperator{\Tor}{Tor}
\DeclareMathOperator{\tr}{tr}
\DeclareMathOperator{\trace}{trace}
\newcommand{{\sbullet}}{{\scriptstyle\bullet}}
\newcommand{\slashslash}{\ensuremath{\!{/\!/}}}
\theoremstyle{definition}
\newtheorem{defn}{Definition}[section]
\newtheorem{rem}[defn]{Remark}
\newtheorem{remark}[defn]{Remark}
\newtheorem{sit}[defn]{}
\newtheorem{example}[defn]{Example}
\newtheorem{exam}[defn]{Example}
\theoremstyle{plain}
\newtheorem{prop}[defn]{Proposition}
\newtheorem{proposition}[defn]{Proposition}
\newtheorem{theorem}[defn]{Theorem}
\newtheorem{lem}[defn]{Lemma}
\newtheorem{lemma}[defn]{Lemma}
\newtheorem{cor}[defn]{Corollary}
\newtheorem{corollary}[defn]{Corollary}
\def\bexa{\begin{exam}}
\def\eexa{\end{exam}}
\def\bpro{\begin{prop}}
\def\epro{\end{prop}}
\def\bcor{\begin{cor}}
\def\ecor{\end{cor}}
\def\bthm{\begin{theorem}}
\def\ethm{\end{theorem}}
\def\bdfn{\begin{eefn}}
\def\edfn{\end{eefn}}
\def\brem{\begin{rem}}
\def\erem{\end{rem}}
\def\brems{\begin{rems}}
\def\erems{\end{rems}}
\def\bsit{\begin{sit}}
\def\esit{\end{sit}}
\def\blem{\begin{lem}}
\def\elem{\end{lem}}
\def\bdi{\pdfsyncstop\begin{eiagram}}
\def\edi{\end{eiagram}\pdfsyncstart}
\def\ba{\begin{array}}
\def\ea{\end{array}}
\def\bnum{\begin{enumerate}}
\def\enum{\end{enumerate}}
\def\be{\begin{equation}}
\def\ee{\end{equation}}
\def\bproof{\begin{proof}}
\def\eproof{\end{proof}}
\begin{document}
\title{Lifting free divisors}

\author{Ragnar-Olaf Buchweitz}
\address{Dept.\ of Computer and Math\-ematical Sciences,
University  of Tor\-onto at Scarborough, 
1265 Military Trail, 
Toronto, ON M1C 1A4,
Canada}
\email{ragnar@utsc.utoronto.ca}

\author{Brian Pike}
\address{Dept.\ of Computer and Math\-ematical Sciences,
University  of Tor\-onto at Scarborough, 
1265 Military Trail, 
Toronto, ON M1C 1A4,
Canada}
\email{bapike@gmail.com}

\thanks{The first author was partly supported by NSERC grant 3-642-114-80}
\date{\today}

\subjclass[2010]{Primary 32S25; Secondary 17B66, 14L30}

\keywords{free divisors, logarithmic vector fields, discriminants,
coregular group actions, invariants, tangent cohomology,
Kodaira--Spencer map}

\begin{abstract} 
Let $\varphi:X\to S$ be a morphism between smooth complex analytic spaces, and
let $f=0$ define a free divisor on $S$.
We prove that if
the deformation space 
$T^1_{X/S}$ of $\varphi$ is a Cohen--Macaulay $\mathcal{O}_X$--module of codimension 2,
and
all of the logarithmic vector fields for $f=0$ lift via $\varphi$, 
then
$f\circ \varphi=0$ defines a free divisor on $X$;
this is generalized in several directions.

Among applications
we recover a result of Mond--van Straten, generalize a
construction of Buchweitz--Conca, and show that
a map 
$\varphi:\mathbb{C}^{n+1}\to \mathbb{C}^n$ with critical set of codimension $2$
has a $T^1_{X/S}$ with the desired properties.
Finally, if
$X$ is a representation of a reductive complex algebraic group $G$ and
$\varphi$ is the algebraic quotient $X\to S=X\!{/\!/} G$ with
$X\!{/\!/} G$ smooth, we
describe sufficient conditions for $T^1_{X/S}$ to be Cohen--Macaulay of
codimension $2$.
In one such case, 
a free divisor on $\mathbb{C}^{n+1}$ lifts under the
operation of ``castling'' to a free divisor on
$\mathbb{C}^{n(n+1)}$, partially generalizing work of Granger--Mond--Schulze on linear free divisors.
We give several other examples of such representations.
\end{abstract}
\maketitle

{\footnotesize\tableofcontents}

\section{Introduction}
Let $f:S\to (\CC,0)$ be the germ of a holomorphic function defining a
reduced hypersurface germ $D=f^{-1}(0)$ in a smooth complex analytic germ
$S=(\CC^m,0)$.
The $\calos$--module $\Der_S(-\log f)$ of
\emph{logarithmic vector fields}
consists of all germs of holomorphic vector fields on $S$ that are
tangent to the smooth points of $D$.
Then $D$ is called a \emph{free divisor} when
$\Der_S(-\log f)$ is a free $\calos$--module, necessarily of rank $m$,
or equivalently,
when $\Der_S(-\log f)$ requires only $m$ generators, the smallest
number possible. A free divisor is either a smooth hypersurface
or singular in codimension one. The ({\em Saito\/} or {\em discriminant\/}) matrix that describes the inclusion of the
logarithmic vector fields into all vector fields on the ambient space is square
and its determinant is an equation of the free divisor, thus, providing a compact representation
of an otherwise usually dense polynomial or power series.

Free divisors are often `discriminants', broadly interpreted, and then
describe the locus of some type of degenerate behavior.
For instance, free divisors classically arose as discriminants of
versal unfoldings of isolated hypersurface (\cite{Saito}) and isolated complete
intersection singularities (\cite{Looijenga}). As well, the locus in a Frobenius manifold
where the Euler vector field is not invertible is a free divisor (\cite{Hertling}).
More recently, other discriminants have been shown to be
free divisors (e.g.,
\cite{Damon-legacyfree,MvS,Damon-nonlinearsections,BEvB}).

Many hyperplane arrangements are classically known to be free divisors and it is
a long outstanding question whether freeness is a combinatorial property in this case
(\cite{OrlikTerao}).

When $\Der_S(-\log f)$ has a free basis of \emph{linear} vector fields,
then $D$ is a \emph{linear free divisor};
these may be thought of as the
discriminant of a \emph{prehomogeneous vector space},
a representation on $S$ of a linear algebraic group that has a Zariski open
orbit. 
\\

While the above list of examples is meant to highlight that free divisors
are everywhere, and, for example, the assignment from isolated
complete intersection singularities to their discriminants in the base
of a semi--universal deformation is essentially injective by \cite{Wirthmueller},
we still have very few methods to construct such divisors explicitly in a given dimension.

Here we give one approach to such construction.
Let $\vp:X=(\CC^n,0)\to S=(\CC^m,0)$ be a holomorphic map between smooth
complex spaces, and let $D=V(f)$ be a free divisor in $S$.
In this paper we ask:
\begin{center}
\emph{
When is $\vp^{-1}(D)\equiv \{f\vp=0\}\subseteq X$
again a free divisor?}
\end{center}
We give sufficient conditions for $\vp^{-1}(D)$ to be a free divisor, and
describe a number of situations in which these conditions hold.
This gives a flexible method to construct new free divisors, and gives some
insight into the behavior of logarithmic vector fields under this
pullback operation.
\\

The structure of the paper is the following.
Our sufficient conditions are stated in terms of
modules describing the deformations of $\vp$,
and the module of vector fields on $S$ that lift across $\varphi$ to
vector fields on $X$.
Hence,
in \S\ref{sec:background} we introduce 
some deformation theory, the Kodaira-Spencer map, and also free divisors.

\S\ref{sec:mainresult} contains our two main results.
Theorems \ref{altmthm} and \ref{mthm} each
give conditions for $\vp^{-1}(D)$ to be free;
Theorem \ref{mthm} is a consequence of Theorem \ref{altmthm} that
has more restrictive
hypotheses---that are easier to check---and a stronger conclusion.
All but one of our applications use Theorem \ref{mthm}.
Our first example generalizes a result of Mond and van Straten
\cite{MvS}.

Both Theorems require that all vector fields $\eta\in \Der_S(-\log f)$
lift across $\vp$.
In \S\ref{sec:lifteuler}, we relax this condition, at least for the
Euler vector field of a weighted-homogeneous $f$.
The motivation for this consequence of Theorem \ref{mthm}
is the case of $D=\{0\}\subset S=(\CC,0)$,
where the conditions for $\vp^{-1}(D)=V(\vp)$ to define
a free divisor are known and require no lifting of vector fields.

In \S\ref{sec:addcompdim} we describe 
a construction that,
given a free divisor $D$ in $X$ and an
appropriate
ideal $I\subset \calox$, constructs a free divisor on $X\times Y$
that contains $D\times Y$ and has additional nontrivial components.
This application of Theorem \ref{mthm}
generalizes a construction of Buchweitz--Conca \cite{BC},
and a construction for linear free divisors \cite{DP,Pike}.

The rest of the paper
describes situations where the
deformation condition on $\vp$ is satisfied.
Then the requirement for all $\eta\in\Der_S(-\log f)$
to lift is generally satisfied for certain free divisors in $S$.
For instance, in \S\ref{sec:cnp1tocn} we show that maps $\vp:\CC^{n+1}\to\CC^n$
with critical set of codimension $2$ satisfy the deformation
condition.

We begin \S\ref{sec:groupactions} by describing 
our original interest in this problem.
Granger--Mond--Schulze \cite{granger-mond-schulze}
showed that
the set of prehomogeneous vector spaces that define linear free
divisors is
invariant under `castling',
an operation on
prehomogeneous vector spaces.
Thus, the 
corresponding transformation on a
linear free divisor produces another linear free divisor.
In the simplest case,
this transformation is a lift across the map
$\vp:X=M_{n,n+1}\to \CC^{n+1}$,
where $M_{n,n+1}$ is the space of $n\times (n+1)$ matrices and 
each component of $\vp$ is a signed determinant.
In Theorem \ref{thm:maxmin}, we use Theorem \ref{mthm} to
show that pulling back an 
arbitrary free divisor via this $\vp$ produces another free divisor.
This $\vp$ is the algebraic quotient of $\SL(n,\CC)$
acting on $M_{n,n+1}$.

The rest of \S\ref{sec:groupactions} 
generalizes this result by 
studying algebraic quotients
$\vp:X\to S=X\slashslash G$
of a reductive linear algebraic group $G$ acting on $X$.
Since we require that $S$ is smooth, so the action of $G$ is
\emph{coregular},
the components of $\vp$ generate the subring of $G$-invariant
polynomials on $X$.
Proposition \ref{prop:coregulart1cm} gives sufficient conditions
for the deformation condition on $\vp$ to be satisfied,
and Lemma \ref{lemma:grouplift} suggests a method to
prove that certain vector fields are liftable.
As an aside, we point out that the invariants of lowest degree may
be easily computed.

Finally, 
\S\ref{sec:groupexamples} describes many---but not all---examples
of group actions
with quotients satisfying the deformation condition;
for each, we identify those $f$ satisfying the lifting
condition.  This is a very productive method for producing free
divisors.
\\

Acknowledgements:
Discussions with David Mond inspired us to look further at castling,
and Eleonore Faber gave helpful comments on the paper.

\section{Deformation theory and free divisors}
\label{sec:background}

\begin{sit}
If $\vp:X\to S$ is any morphism of complex analytic germs, we write $\vp^{\flat}:\calos\to\calox, 
f\mapsto f\vp$, for the corresponding morphism of local analytic algebras.
By common abuse of notation, we often write
$\vp$ for $\vp^{\flat}$. 
Furthermore, we throughout denote by 
$\fm_{*}$ the maximal ideal in $\calo_{*}$ of germs of functions that vanish at $0$, the distinguished 
point of the germ.
\end{sit}

\subsection*{Deformations}
We begin with some background on deformation theory and tangent cohomology.

\begin{sit}
If $\vp:X\to S$ is still any morphism of analytic germs and $\calm$ an $\calox$--module, we 
denote $T^{i}_{X/S}(\calm)= H^{i}(\Hom_{\calox}(\LL_{X/S},\calm))$, the 
$i^{th}$ tangent cohomology of $X$ over $S$ with values in $\calm$. Here $\LL_{X/S}$ is a cotangent complex
for $\vp$, well defined up to isomorphism in the derived category of coherent $\calox$--modules
(e.g., \cite[Appendix C]{GLS}).

As usual, we abbreviate $T^{i}_{X/S}=T^{i}_{X/S}(\calox)$, and write simply $T^{i}_{X}$ if $\vp$ is 
the constant map to a point.
\end{sit}

\begin{sit}
Note that $T^{0}_{X/S}(\calm)=\Der_{S}(\calox,\calm)$ is the $\calox$--module of $\vp^{-1}(\calos)$--linear
vector fields on $X$ with values in $\calm$, or, shorter, the $\calox$--module of {\em vertical vector fields\/}
along $\vp$ with values in $\calm$. If $\calm=\calox$, we simply speak of the module of vertical vector 
fields along $\vp$.
\end{sit}

\begin{sit}
If $\vp$ is a {\em smooth\/} morphism,
then $T^{i}_{X/S}(\calm)=0$ for all $i\neq 0$, and all $\calm$. As tangent 
cohomology localizes on $X$, the $\calox$--modules
$T^{i}_{X/S}(\calm)$, for $i > 0$, are thus 
supported on the {\em critical locus\/} of $\vp$, the 
closed subgerm $C(\vp)\subseteq  X$, where 
$\vp$ fails to be smooth (e.g., \cite{GLS}). 
\end{sit}

\begin{sit}
\label{sit:ZJ}
If $\vp:X\to S$ is any morphism of analytic germs, it induces the (dual) {\em Zariski--Jacobi 
sequence\/} in tangent cohomology, the long exact sequence of $\calox$--modules
\begin{align*}
\xymatrix{
0\ar[r]&T^{0}_{X/S}\ar[r]&T^{0}_{X}\ar[r]^-{\Jac(\vp)}&T^{0}_{S}(\calox)\ar[r]^{\delta}&T^{1}_{X/S}\ar[r]&
T^{1}_{X}\ar[r]
&\cdots\,,
}
\end{align*}
where $\Jac(\vp)$ is the $\calox$--dual to 
$d\vp:\vp^{*}\Omega^{1}_{S}\cong \calox\otimes_{\calos}\Omega^1_S
  \to \Omega^{1}_{X}$ that in turn sends
$1\otimes_{\calos}ds$ to $d(s\vp)$ for any function germ $s\in\calos$.

If $x_{1},...,x_{n}$ are local coordinates on $X$ and $s_{1},...,s_{m}$ are local coordinates on $S$, then
a vector field 
\begin{align}
Z&=\sum_{i=1}^{n}g_{i}\frac{\partial}{\partial x_{i}}\in\  T^{0}_{X}\ \subseteq\ 
\bigoplus_{i=1}^{n}\calox\frac{\partial}{\partial x_{i}}\,,
\label{al:vfformt0x} 
\intertext{with coefficients $g_{i}\in \calox$, maps to the vector field}
\Jac(\vp)(Z) &= 
\sum_{j=1}^{m}\sum_{i=1}^{n}g_{i}\frac{\partial (s_{j}{\circ}\vp)}{\partial x_{i}}\frac{\partial}{\partial s_{j}}
\in\  T^{0}_{S}(\calox)\ \subseteq\ \bigoplus_{j=1}^{m}\calox\frac{\partial}{\partial s_{j}}\,.
\label{al:vfformt0sox} 
\end{align}
\end{sit}

\begin{sit}
Of particular importance is the $\calos$--linear {\em Kodaira--Spencer map\/} defined by $\vp$. It is the 
composition
\begin{align*}
\delta_{KS}=\delta_{KS}^{\vp}=\delta\circ T^{0}_{S}(\vp^{\flat}):
T^{0}_{S}(\calos)\ \xto{T^{0}_{S}(\vp^{\flat})}\ T^{0}_{S}(\calox)\xto{\ \delta\ } T^{1}_{X/S}\,
\end{align*}
that sends a vector field $D=\sum_{j=1}^{m}f_{j}\frac{\partial}{\partial s_{j}}\in T^{0}_{S}$ to the
class 
\begin{align*}
\delta_{KS}(D)=\delta\Bigg(\sum_{j=1}^{m}f_{j}\vp\frac{\partial}{\partial
s_{j}}\Bigg)\in T^1_{X/S}.
\end{align*}
Thus we have a commutative diagram
\begin{align*}
\xymatrix{
  & & & T^0_S \ar[d]_{T^0_S(\vp^{\flat})} \ar[dr]^{\delta_{KS}} \\
0\ar[r]&T^{0}_{X/S}\ar[r]&T^{0}_{X}\ar[r]^-{\Jac(\vp)}&T^{0}_{S}(\calox)\ar[r]^{\delta}&T^{1}_{X/S}\ar[r]&
T^{1}_{X}\ar[r]
&\cdots\,.
}
\end{align*}
\end{sit}

\begin{sit}
The significance of the Kodaira--Spencer map is twofold: a vector field $D\in T^{0}_{S}$ is {\em liftable\/} 
to $X$, if, and only if, $\delta_{KS}(D)=0$. Indeed, the exactness of the Zariski--Jacobi tangent 
cohomology sequence shows that the image 
$T^{0}_{S}(\vp^\flat)(D)=\sum_{j=1}^{m}f_{j}\vp\frac{\partial}{\partial s_{j}}$ in $T^{0}_{S}(\calox)$ of the 
vector field $D$ is induced from a vector field $E$ on $X$, in that $T^{0}_{S}(\vp^\flat)(D)= \Jac(\vp)(E)$,
for some $E$ if, and only if, $\delta_{KS}(D)=0$. One therefore calls the kernel of the Kodaira--Spencer 
map also the $\calos$--submodule of {\em liftable vector fields\/} in $T^{0}_{S}$.

A deformation-theoretic interpretation is that such a lift trivializes the
infinitesimal first-order deformation of $X/S$ along $D$, whence we also say that $X/S$ is
(infinitesimally) {\em trivial along\/} $D$ as soon as $\delta_{KS}(D)=0$.
\end{sit}

\begin{sit}
On the other hand, if $\vp$ is a {\em flat} morphism, then the Kodaira--Spencer map is {\em surjective}, if, 
and only if, $\vp$ represents a {\em versal deformation\/} of the fibre $X_{0}=\vp^{-1}(0)\subseteq X$ of 
$\vp$ over the origin (\cite{Flenner}).
\end{sit}

\begin{sit}
If $X$ is smooth, then $T^1_X=0$ and
the inclusion
\eqref{al:vfformt0x} is an equality, while the inclusion
\eqref{al:vfformt0sox} becomes an equality if $S$ is smooth.

In particular, if both $X$ and $S$ are smooth, the dual Zariski--Jacobi sequence truncates to
a resolution of $T^1_{X/S}$, with $T^0_{X/S}$ as a second
syzygy module.

In the language of
the Thom--Mather theory of the singularities of differentiable maps,
$T^1_{X/S}$ is isomorphic as a vector space to the
\emph{extended normal space} of $\vp$ under
\emph{right equivalence}, while
the cokernel of $\delta_{KS}:T^0_S\to T^1_{X/S}$ is isomorphic
as a vector space
to the
extended normal space of $\vp$ under \emph{left-right equivalence}
(see \cite{greuel-le}).
\end{sit}

\subsection*{Free divisors}
After this short excursion into the general theory of the
\mbox{(co-)tangent} complex and its cohomology,
we recall the pertinent facts about free divisors.

\begin{sit}
\label{sit:definederlog}
Let $f\in\calos$ be the germ of a nonzero function on a smooth germ $S$ with zero locus the {\em  divisor}, or {\em hypersurface\/} 
germ $V(f)\equiv \{f=0\}\subseteq S$. Differentiating $f$ yields the commutative diagram 
in Figure \ref{fddiag} of 
$\calos$--modules with exact rows and exact columns, the rows exhibiting, one may say, defining, the 
{\em singular locus\/} $\Sigma$ of $V(f)$ as well as the $\calos$--module $\Der_{S}(-\log f)$ of 
{\em logarithmic vector fields\/} on $S$ along $V(f)$, as cokernel, respectively kernel, of the 
$\calos$--linear maps in the middle.
%
%
\begin{figure}
\begin{equation*}
\xymatrix{
&&0\ar[d]&&0\ar[d]\\
&&\calos\ar@{=}[rr]\ar[d]_{in_{2}}&&\calos\ar[d]^{\cdot f}\\
0\ar[r]&\Der_S(-\log f)\ar[r]^-{\tilde\sigma_{f}}\ar@{=}[d]
&T^{0}_{S}\oplus\calos\ar[rr]^-{(\Jac(f),f)}\ar[d]^{pr_{1}}
&&\calos\ar[r]\ar[d]&\calo_{\Sigma}\ar@{=}[d]\ar[r]&0&(\dagger)\\
0\ar[r]&\Der_S(-\log f)\ar[r]^-{\sigma_{f}}&T^{0}_{S}\ar[rr]^-{\jac(f)}\ar[d]&&\calo_S/(f)\ar[r]\ar[d]&\calo_{\Sigma} \ar[r]&0\\
&&0&&0
}
\end{equation*}
\caption{The commutative diagram exhibiting $\Sigma$ and $\Der_S(-\log
f)$, described in \ref{sit:definederlog}--\ref{sit:saitomatrix}.}
\label{fddiag}
\end{figure}

Here $\Jac(f)(D) = D(f)$ for any vector field or derivation $D\in T^{0}_{S}$, and $\jac(f)(D)$ is the class of
$D(f)$ modulo $f$.
\end{sit}

\begin{defn}
Recall that $f$ defines a free divisor in $S$, if $f$ is {\em reduced\/} and $\Der_{S}(-\log f)$ is a {\em free\/} 
$\calos$--module, necessarily of rank $m=\dim S$ (see \cite{Saito}).
\end{defn}

We recall the basic notions of the theory.

\begin{sit}
\label{sit:saitomatrix}
If $f\in\calos$ defines a free divisor, $\{e_{j}\}_{j=1}^{m}$ is a choice of an $\calos$--basis of 
$\Der_{S}(-\log f)$, and
$\{\partial/\partial s_{j}\}_{j=1}^m$
is the canonical basis of $T^{0}_{S}$ determined by local coordinates
$s_{j}$ on $S$ (as
in \ref{sit:ZJ}), then the matrix of the inclusion $\sigma_{f}$ in
Figure \ref{fddiag} with respect to these
bases is  
a {\em Saito\/} or {\em discriminant matrix\/} for $f$.

The matrix of $\tilde\sigma_{f}$, where we extend the basis
$\{\partial/\partial s_{j}\}_{j=1}^m$ of $T^{0}_{S}$ by 
the canonical basis $1\in \calos$ of that free $\calos$--module of rank $1$, yields then the 
{\em extended\/} Saito or discriminant matrix for $f$, in that 
\begin{align*}
\tilde\sigma_{f}(e_{j}) &= \left(\sum_{i=1}^{m}a_{ij}\frac{\partial}{\partial s_{i}}, -h_{j}\right)
\end{align*}
records that the vector field
$D_{j}=\sum_{i=1}^{m}a_{ij}\frac{\partial}{\partial
s_{i}}=\sigma_f(e_j)$
is logarithmic along $f$,
with
\begin{align*}
D_{j}(\log f):=\frac{D_{j}(f)}{f} = h_{j}\in\calos\,.
\end{align*}
The exactness at $T^0_S\oplus \calos$ of the row $(\dagger)$ in Figure \ref{fddiag}
follows from the observation that
\begin{align*}
(\Jac(f),f)(D,-h)=D(f)-f\cdot h
\end{align*}
vanishes if, and only if, $D$ is logarithmic with
$D(\log f)=h$.

Moreover, the minor $\Delta_{j}$ obtained by removing the column
of $\tilde\sigma_{f}$ corresponding to $\partial/\partial s_{j}$
and taking the determinant of the remaining square matrix equals $\partial f/\partial s_{j}$ up to multiplication by a unit, while the
determinant of the
matrix of $\sigma_{f}$ with respect to the chosen bases returns $f$ times a unit.

In these terms, the vector fields $D_{1},..., D_{m}$ form a basis of
the logarithmic vector fields as a submodule of 
$T^{0}_{S}$.
\end{sit}

The commutative diagram in Figure \ref{fddiag} yields as well Aleksandrov's characterization of free divisors.
\begin{proposition}[\cite{Aleksandrov}]
\label{prop:aleks}
A hypersurface germ $V(f)\subset S$ is a free divisor, if, and only
if, the singular\footnote{%
The empty set has any codimension.
} locus $\Sigma$ is 
Cohen--Macaulay of codimension $2$ in $S$.
\end{proposition}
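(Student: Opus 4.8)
The plan is to extract from the row marked $(\dagger)$ of Figure \ref{fddiag} the four-term exact sequence
\[
0\to \Der_S(-\log f)\xrightarrow{\tilde\sigma_f} T^0_S\oplus\calos\xrightarrow{(\Jac(f),f)}\calos\to\calo_\Sigma\to 0,
\]
whose middle two terms are free $\calos$--modules of ranks $m+1$ and $1$ and whose cokernel is $\calo_\Sigma=\calos/(f,\partial_1 f,\dots,\partial_m f)$. This exhibits $\Der_S(-\log f)$ as a second syzygy of $\calo_\Sigma$ and reduces the whole equivalence to a homological computation over the regular local ring $\calos$ of dimension $m=\dim S$.

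The key input I would invoke is the standard homological fact that, for an exact sequence $0\to K\to P_1\to P_0\to M\to 0$ with $P_0,P_1$ finitely generated free, the module $K$ is free (equivalently projective, $\calos$ being local) if and only if $\pdim_{\calos} M\le 2$. Applied with $K=\Der_S(-\log f)$ and $M=\calo_\Sigma$, this turns ``$\Der_S(-\log f)$ is free'' into ``$\pdim_{\calos}\calo_\Sigma\le 2$''. The Auslander--Buchsbaum formula $\pdim_{\calos}\calo_\Sigma+\depth\calo_\Sigma=\depth\calos=m$ then rewrites this last condition as $\depth\calo_\Sigma\ge m-2$, which is already half of the Cohen--Macaulay statement.

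It then remains to match the depth condition with ``Cohen--Macaulay of codimension $2$''. For the ``only if'' direction I would use that reducedness of $f$ forces $\codim_S\Sigma\ge 2$, since a reduced hypersurface is generically smooth and its singular locus is a proper closed subset of $V(f)$; thus $\dim\calo_\Sigma\le m-2$, and combined with $\depth\calo_\Sigma\le\dim\calo_\Sigma$ the chain $m-2\le\depth\calo_\Sigma\le\dim\calo_\Sigma\le m-2$ collapses, giving $\depth\calo_\Sigma=\dim\calo_\Sigma=m-2$. For the ``if'' direction, from $\Sigma$ Cohen--Macaulay of codimension $2$ I would first recover reducedness from Serre's criterion (the hypersurface $\calos/(f)$ is automatically $S_1$ as a complete intersection, and $\codim\Sigma=2$ supplies $R_0$), and then read off $\depth\calo_\Sigma=\dim\calo_\Sigma=m-2$, whence $\pdim_{\calos}\calo_\Sigma=2$ and $\Der_S(-\log f)$ is free by the lemma; the rank is automatically $m$ from the Euler characteristic of the sequence.

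I do not expect a deep obstacle here—the argument is a homological translation—so the care goes into the bookkeeping at the edges. One must handle the smooth case $\Sigma=\emptyset$ (where $\calo_\Sigma=0$ and the empty set is assigned codimension $2$ by convention) separately, and in the ``if'' direction one must genuinely \emph{derive} reducedness from the codimension-$2$ hypothesis rather than assume it, since a non-reduced $f$ would produce a $\Sigma$ of codimension $1$. The two load-bearing inputs are the exactness of the row $(\dagger)$, which is where the geometry of $f$ enters, and the Auslander--Buchsbaum formula converting projective dimension into depth.
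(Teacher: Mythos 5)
Your proposal is correct and follows essentially the same route as the paper: both read off from the row $(\dagger)$ of Figure \ref{fddiag} that $\Der_S(-\log f)$ is a second syzygy of $\calo_\Sigma$, so that freeness is equivalent to $\pdim_{\calos}\calo_\Sigma\le 2$, and both pair this with the equivalence ``$f$ reduced $\Leftrightarrow$ $\codim_S\Sigma\ge 2$''. You merely make explicit the steps the paper compresses (the Auslander--Buchsbaum bookkeeping and, for the ``if'' direction, deriving reducedness via Serre's criterion), which is a faithful expansion rather than a different argument.
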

\begin{proof}
Indeed, the codimension of $\Sigma$ in $S$ is at least $2$ if, and only if, $f$ is squarefree, that is, $V(f)$ is 
reduced. On the other hand, $\Der_{S}(-\log f)$ is free if, and only if, $\calo_{\Sigma}$ is of projective 
dimension, and thus, of codimension at most $2$.
\end{proof}

To prepare for our main result, we record how Figure \ref{fddiag}
behaves with respect to base change.
\begin{lemma}
\label{lem:pb}
Assume $V(f)\subset S$ is a free divisor and let $\vp:X\to S$ be a morphism from an analytic germ $X$ to $S$.
If $X$ is Cohen--Macaulay and the inverse image $\vp^{-1}(\Sigma)$ of the singular locus $\Sigma$ of $V(f)$
is still%
\footnote{Note that the codimension cannot go up under pullback.}
of codimension $2$, then the exact row $(\dagger)$ in
Figure \ref{fddiag} pulls back to an exact sequence
\begin{align*}
\xymatrix{
0\ar[r]&\vp^{*}\Der_S(-\log f)\ar[r]^-{\theta}
&T^{0}_{S}(\calox)\oplus\calox\ar[r]^-{\alpha}
&\calox\ar[r]&\calo_{\vp^{-1}(\Sigma)}\ar[r]&0\,,
}
\end{align*}
where
$\theta(1\otimes D)=(T^0_S(\vp^\flat)(\sigma_f(D)),
\vp^\flat(-\sigma_f(D)(\log f)))$
and
$\alpha = \vp^{\flat}(\Jac(f),f)=(\Jac(f)\vp,f\vp)$.

If furthermore $f\vp$ remains a non-zero-divisor in $\calox$, then 
the pull back of Figure \ref{fddiag} by $\vp$ gives a diagram with
exact rows and columns.
\end{lemma}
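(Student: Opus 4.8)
The plan is to apply the right-exact pullback functor $\varphi^{*}=\calox\otimes_{\calos}-$ to the whole of Figure \ref{fddiag} and then to verify exactness of the resulting rows and columns one at a time. Since tensoring preserves commutativity of squares, the only issue is exactness; and since pullback is only right-exact, the work consists precisely in locating where left-exactness can fail and checking that the hypotheses rescue it there. The top row $(\dagger)$ becomes exact by the first part of the Lemma, so I am left with the two nontrivial columns and the bottom row.

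First I would dispose of the columns. The middle-left column $0\to\calos\xrightarrow{in_{2}}T^{0}_{S}\oplus\calos\xrightarrow{pr_{1}}T^{0}_{S}\to 0$ is canonically split, so its pullback $0\to\calox\to T^{0}_{S}(\calox)\oplus\calox\to T^{0}_{S}(\calox)\to 0$ is again split, hence exact. The middle-right column $0\to\calos\xrightarrow{\cdot f}\calos\to\calos/(f)\to 0$ pulls back to $0\to\calox\xrightarrow{\cdot f\varphi}\calox\to\calox/(f\varphi)\to 0$; right-exactness is automatic, while left-exactness, i.e.\ injectivity of multiplication by $f\varphi$, is \emph{exactly} the new hypothesis that $f\varphi$ is a non-zero-divisor.

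It remains to deduce exactness of the bottom row. Here I would read the vertical maps of the pulled-back diagram as a morphism of four-term complexes from the (exact) top row to the bottom row; degreewise this morphism is the identity, $pr_{1}$, the quotient $\calox\to\calox/(f\varphi)$, and the identity, so it is surjective in each degree. Its kernel complex $K_{\bullet}$ is nonzero only in the two middle spots, namely $\ker(pr_{1})\cong\calox$ and $\ker(\calox\to\calox/(f\varphi))=(f\varphi)$, joined by the restriction of $\alpha$, which sends $g\mapsto f\varphi\cdot g$. Under the non-zero-divisor hypothesis this restriction is an isomorphism $\calox\xrightarrow{\sim}(f\varphi)$, so $K_{\bullet}$ is acyclic. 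The long exact homology sequence of $0\to K_{\bullet}\to(\text{top row})\to(\text{bottom row})\to 0$, together with the acyclicity of $K_{\bullet}$ and of the top row, then forces the bottom row to be acyclic, that is, exact; and the identification $\varphi^{*}\sigma_{f}=pr_{1}\circ\varphi^{*}\tilde\sigma_{f}$ shows this is the claimed bottom row.

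The one load-bearing point is the identification of $K_{\bullet}$ and the verification that its single differential is multiplication by $f\varphi$; everything else is either given (top-row exactness) or formal (the splitting, the long exact sequence). Thus the main obstacle is less a difficulty than the bookkeeping that isolates where the non-zero-divisor hypothesis is consumed, namely simultaneously in the right column and in the acyclicity of $K_{\bullet}$.
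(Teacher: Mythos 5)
There is a genuine gap: you never prove the first, and main, assertion of the Lemma. Your opening move --- ``the top row $(\dagger)$ becomes exact by the first part of the Lemma'' --- cites precisely the statement under proof: the first part of Lemma \ref{lem:pb} \emph{is} the claim that the pullback of $(\dagger)$ by $\vp$ stays exact, and that is where all the real work lies. Symptomatically, your argument never uses either of the two standing hypotheses, that $X$ is Cohen--Macaulay and that $\vp^{-1}(\Sigma)$ has codimension $2$; since $\calox\otimes_{\calos}(-)$ is only right exact, exactness of the pulled-back four-term sequence is exactly what can fail without them. The paper's argument runs as follows: the homology of the pulled-back complex $C$ is $H_i(C)=\Tor_i^{\calos}(\calox,\calo_{\Sigma})$, hence supported on $\vp^{-1}(\Sigma)$ and annihilated by its ideal $J=\calox\cdot\vp^{\flat}(I(\Sigma))$; since $X$ is Cohen--Macaulay, $\depth(J,\calox)=\codim(J)=2$, so the free modules $C_i$ of the complex satisfy $\depth(J,C_i)>i-1$ for $i\geq 1$; the acyclicity criterion of \cite[\S1.2, Corollaire 1]{bourbakicommalgX} then forces $H_i(C)=0$ for all $i\geq 1$. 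Some argument of this kind (identification of the homology as Tor, plus a depth/acyclicity lemma) is indispensable, and your proposal contains none of it.

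By contrast, your handling of the ``furthermore'' clause is correct and in fact more explicit than the paper's, which merely remarks that the second assertion follows: the split column pulls back split; the column $0\to\calox\xrightarrow{\cdot f\vp}\calox\to\calox/(f\vp)\to 0$ is exact precisely because $f\vp$ is a non-zero-divisor; and your kernel-complex argument --- $K_{\bullet}$ concentrated in the two middle spots as $\calox\xrightarrow{\cdot f\vp}(f\vp)$, an isomorphism under the non-zero-divisor hypothesis, so the long exact homology sequence transfers acyclicity from the top row to the bottom row --- is sound. But all of this takes the exactness of the pulled-back top row as input, so as written your proposal establishes only the easy half of the Lemma.
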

\begin{proof}
Let $I=I(\Sigma)\subset \calos$.

Apply $\calox\otimes_{\calos}\!\text{--}\,$ to the exact sequence
$(\dagger)$ in Figure \ref{fddiag} to get
the pullback 
$\calox$--complex $C$.
The last nonzero term of $C$ is
$\vp^*(\calo_{\Sigma})\cong \calox/J$ for $J=\calox\cdot\vp^\flat(I)$, and this
is the pullback of the scheme $\calo_{\Sigma}$, as claimed.
It is straightforward to check that under the identification
of the two middle free modules in $C$
with $T^0_S(\calox)\oplus \calox$ and $\calox$ respectively,
the complex $C$ is the sequence given in the statement.

For $i\geq 0$, we have 
$H_i(C)=\Tor_i^{\calos}(\calox,\calo_{\Sigma})$,
and so
each homology module is supported on $\vp^{-1}(\Sigma)$.
In particular,
$J$ annihilates each $H_i(C)$.

Since $X$ is Cohen--Macaulay,
$\depth(J,\calox)=\codim(J)=2$, and
similarly
$\depth(J,(\calox)^k)=2$ for $k\geq 1$.
Thus, if
$C_0=\calox,C_1,\ldots$ are the free modules in $C$,
then
$\depth(J,C_i)>i-1$ for all $i\geq 1$.
%
This fact, and the earlier observation that $J\cdot H_i(C)=0$ for
$i\geq 1$, are
enough to ensure that $H_i(C)=0$ for all $i\geq 1$
(see \cite[\S1.2, Corollaire 1]{bourbakicommalgX}).
Thus $C$ is exact.

The second assertion then follows.
\end{proof}


\section{The Main Results}
\label{sec:mainresult}
\begin{sit}
Fix as before a smooth germ $S$ and let $f\in\calos$ define a free divisor $V(f)\subset S$ with singular locus
$\Sigma\subset V(f)$.
By our definition, $f$ is reduced.
In fact, reducedness does not 
matter when computing the module of logarithmic vector fields for a
hypersurface.
\end{sit}

\begin{lemma}[%
{\cite[p. 313]{hausermuller}},
{\cite[Lemma 3.4]{GS}}]
\label{lemma:notreduced}
If 
$X$ is smooth and
$h_1,h_2\in \calox$ 
define the same zero loci as sets in $X$, then
$\Der_X(-\log h_1)=\Der_X(-\log h_2)$.
\end{lemma}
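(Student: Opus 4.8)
The plan is to give an intrinsic description of $\Der_X(-\log h)$ that manifestly depends only on the reduced zero locus $V(h)$ and not on the chosen defining equation $h$. Since $X$ is smooth, $\calox$ is a regular local ring, hence a unique factorization domain, and by the (R\"uckert) analytic Nullstellensatz the radical $\sqrt{(h)}$ equals the ideal of the set $V(h)$. Writing $h=u\prod_{i=1}^{r}p_i^{a_i}$ with $u$ a unit, the $p_i$ pairwise non-associate primes, and $a_i\geq 1$, the hypothesis that $h_1$ and $h_2$ cut out the same set means precisely that they share, up to units and reordering, the same prime factors $p_1,\dots,p_r$. Because $\Der_X(-\log h)=\{\eta\in T^0_X:\eta(h)\in(h)\}$ depends only on the principal ideal $(h)$, we may absorb the unit and assume $u=1$. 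It then suffices to prove the equality
\[
\Der_X(-\log h)=\{\eta\in T^0_X:\eta(p_i)\in(p_i)\text{ for all }i=1,\dots,r\},
\]
since the right-hand side refers only to the set $\{p_1,\dots,p_r\}$, which $h_1$ and $h_2$ have in common.

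For the inclusion $\supseteq$, suppose $\eta(p_i)=c_ip_i$ for each $i$. The product rule gives $\eta(h)=\sum_{i=1}^{r}a_i\,\eta(p_i)\,(h/p_i)$, where each $h/p_i\in\calox$, so
\[
\eta(h)=\sum_{i=1}^{r}a_i c_i p_i\,(h/p_i)=h\sum_{i=1}^{r}a_ic_i\in(h),
\]
whence $\eta\in\Der_X(-\log h)$.

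For the reverse inclusion, fix an index $i$ and localize at the height-one prime $(p_i)$. As $\calox$ is a UFD, $\calox_{(p_i)}$ is a discrete valuation ring with uniformizer $p_i$; write $v$ for its valuation. The remaining primes $p_j$ ($j\neq i$) become units there, so $v(h/p_j)=a_i$ for $j\neq i$ while $v(h/p_i)=a_i-1$, and every $\eta(p_j)\in\calox$ has $v(\eta(p_j))\geq 0$. Assuming $\eta(h)\in(h)$, we have $v(\eta(h))\geq a_i$. In the expansion $\eta(h)=\sum_j a_j\eta(p_j)(h/p_j)$ each summand with $j\neq i$ has valuation at least $a_i$, so the residual summand satisfies $v\bigl(a_i\eta(p_i)(h/p_i)\bigr)\geq a_i$ as well. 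Since we work over $\CC$, the integer $a_i$ is a unit, so $v(\eta(p_i))+(a_i-1)\geq a_i$, i.e. $v(\eta(p_i))\geq 1$. Unique factorization upgrades this valuation bound to genuine divisibility $p_i\mid\eta(p_i)$ in $\calox$, that is $\eta(p_i)\in(p_i)$. Running over all $i$ establishes the displayed characterization and hence the lemma.

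The only delicate point is the reverse inclusion in the presence of multiplicities $a_i>1$: one must isolate the contribution of $p_i$ from the rest of $\eta(h)$, and this is exactly what passing to the valuation ring $\calox_{(p_i)}$ accomplishes. The characteristic-zero hypothesis enters solely to ensure $a_i\neq 0$; I expect this separation argument to be the main obstacle, with everything else reducing to routine bookkeeping with unique factorization.
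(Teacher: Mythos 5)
Your proof is correct and follows essentially the same route as the paper: the paper's proof consists of the single claim that $\Der_X(-\log g)=\bigcap_i \Der_X(-\log g_i)$ for the distinct irreducible factors $g_i$, justified as ``an easy argument using the product rule and the fact that $\calox$ is a unique factorization domain,'' which is exactly the characterization you establish. Your localization at the height-one primes $(p_i)$ is simply a careful way of carrying out that easy argument (isolating the $i$th summand of the product rule via the $p_i$-adic valuation), so the two proofs coincide in substance.
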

\begin{proof}
Let $g\in\calox$ factor
into distinct irreducible components
as
$g=g_1^{k_1}\cdots g_\ell^{k_\ell}$.
By an easy argument using the product rule and the fact that
$\calox$ is a unique factorization domain,
$\Der_X(-\log g)=\cap_{i} \Der_X(-\log g_i)$.
The result follows.
\end{proof}

\begin{example}
For $g=g_1^{k_1}\cdots g_\ell^{k_\ell}$ as in the proof, the logarithmic vector fields satisfy
$\Der_X(-\log g)=\Der_X(-\log g_1\cdots g_\ell)$.
\end{example}

We now give our main result, a sufficient condition for the reduction of $f\vp$ to
define a free divisor in $X$. 

\begin{theorem}
\label{altmthm}
Let $\vp:X\to S$ be a morphism of smooth germs and let $f\in\calos$ define a free divisor 
$V(f)\subset S$ with singular locus $\Sigma\subset V(f)$. 
Assume that $\mathrm{Image}(\vp)\nsubseteq V(f)$, i.e., $f\vp$ is not zero.
Let $g$ be a \emph{reduction} of $f\vp$ in $\calox$, a reduced function defining
the same zero locus as $f\vp$.
If 
\begin{enumerate}[\rm(a)]
\item
\label{mthm.t0xsiscm} 
the module of vertical vector fields
$T^0_{X/S}$ is free,
\item 
\label{mthm.ks}
the Kodaira--Spencer map $\delta_{KS}:T^{0}_{S}\to T^{1}_{X/S}$ vanishes on $\Der_{S}(-\log f)$, that is,
$\delta_{KS}\circ \sigma_{f} = 0$, and
\item
\label{mthm.singloc}
the inverse image $\vp^{-1}(\Sigma)$ of the singular locus is still of codimension $2$ in $X$,
\end{enumerate}
then $g$ defines a {\em free divisor\/} in $X$ and its
$\calox$--module of logarithmic vector fields satisfies
\begin{align}
\label{al:altderxeqn}
\Der_X(-\log g)=
\Der_{X}(-\log f\vp)\cong T^{0}_{X/S}\oplus\vp^{*}\Der_{S}(-\log f)\,.
\end{align}
\end{theorem}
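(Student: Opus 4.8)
The plan is to exhibit an explicit short free resolution of $\calo_{\vp^{-1}(\Sigma)}$ and identify $\Der_X(-\log f\vp)$ as the kernel appearing in it. First I would observe that since $V(f)$ is a free divisor with singular locus $\Sigma$, Proposition~\ref{prop:aleks} tells us $\Sigma$ is Cohen--Macaulay of codimension $2$ in $S$; combined with hypothesis~\eqref{mthm.singloc} and the fact that $X$ is smooth (hence Cohen--Macaulay), Lemma~\ref{lem:pb} applies. This gives the exact sequence
\begin{align*}
\xymatrix{
0\ar[r]&\vp^{*}\Der_S(-\log f)\ar[r]^-{\vp^{*}(\tilde\sigma_{f})}
&T^{0}_{S}(\calox)\oplus\calox\ar[r]^-{\alpha}
&\calox\ar[r]&\calo_{\vp^{-1}(\Sigma)}\ar[r]&0\,,
}
\end{align*}
with $\alpha=(\Jac(f)\vp,f\vp)$. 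Because $f\vp\neq 0$ and $\calox$ is a domain, $f\vp$ is a non-zero-divisor, so the second assertion of Lemma~\ref{lem:pb} applies and the full pullback of Figure~\ref{fddiag} has exact rows and columns.

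The key is then to compute $\Der_X(-\log f\vp)$, which by definition sits in the analogue of Figure~\ref{fddiag} for the function $f\vp$ on $X$: it is the kernel of $\Jac(f\vp):T^0_X\to \calox/(f\vp)$, equivalently the kernel of $\tilde\sigma$-type map $(\Jac(f\vp),f\vp):T^0_X\oplus\calox\to\calox$ projected to $T^0_X$. The strategy is to relate the map $\Jac(f\vp)$ to the composite of $\Jac(\vp)$ and $\Jac(f)\vp$ via the chain rule, which is precisely what the Jacobi map $\Jac(\vp)$ in the Zariski--Jacobi sequence of \ref{sit:ZJ} encodes: for $Z\in T^0_X$ one has $Z(f\vp)=\Jac(\vp)(Z)(f)=(\Jac(f)\vp)(\Jac(\vp)(Z))$. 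Hypotheses~\eqref{mthm.t0xsiscm} and~\eqref{mthm.ks} enter through the Zariski--Jacobi sequence: since $T^0_{X/S}$ is free and $\delta_{KS}$ vanishes on $\Der_S(-\log f)$, I would use the commutative diagram relating $\delta_{KS}$ and $\Jac(\vp)$ to show that every logarithmic vector field on $S$ pulls back/lifts to $X$, and that these lifts, together with the vertical fields $T^0_{X/S}$, generate $\Der_X(-\log f\vp)$.

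Concretely, I expect to build the isomorphism~\eqref{al:altderxeqn} as follows. Applying $T^0_X(-)=\Hom_{\calox}(\Omega^1_X,-)$ to the situation, the vertical fields $T^0_{X/S}=\ker(\Jac(\vp))$ are logarithmic (they kill $f\vp$ outright since $Z(f\vp)=\Jac(\vp)(Z)(f)=0$), giving one summand. For the other summand, a logarithmic field $D_j=\tilde\sigma_f(e_j)$ on $S$ satisfies $\delta_{KS}(D_j)=0$ by~\eqref{mthm.ks}, so by the liftability criterion in \ref{sit:ZJ} there is $E_j\in T^0_X$ with $\Jac(\vp)(E_j)=T^0_S(\vp^\flat)(D_j)$; then $E_j(f\vp)=D_j(f)\vp=(h_jf)\vp$ is a multiple of $f\vp$, so $E_j$ is logarithmic for $f\vp$. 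Assembling the $E_j$ with a splitting of $\Jac(\vp)$ onto the free module $T^0_{X/S}$ yields the direct sum decomposition, and I would verify freeness either by counting: the resolution above shows $\calo_{\vp^{-1}(\Sigma)}$ has projective dimension $2$, so by Aleksandrov's criterion (Proposition~\ref{prop:aleks}) applied on $X$, $\Der_X(-\log g)$ is free, and $g$ is a free divisor. The main obstacle I anticipate is the bookkeeping that identifies the abstract lifts $E_j$ with $\vp^*\Der_S(-\log f)$ as a submodule of $T^0_X$ and shows the sum $T^0_{X/S}+\sum\calox E_j$ is direct and exhausts $\Der_X(-\log f\vp)$ — this requires carefully splicing the lifted Zariski--Jacobi sequence with the pulled-back row $(\dagger)$ from Lemma~\ref{lem:pb}, and checking that no extra logarithmic fields appear, which is where the codimension-$2$ hypothesis and the freeness of $T^0_{X/S}$ must both be used simultaneously. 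Finally, $\Der_X(-\log g)=\Der_X(-\log f\vp)$ follows from Lemma~\ref{lemma:notreduced} since $g$ and $f\vp$ cut out the same set.
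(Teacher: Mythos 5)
Your setup is the same as the paper's: you factor $(\Jac(f\vp),f\vp)$ through $\beta=\Jac(\vp)\oplus\id_{\calox}$ and $\alpha=\vp^\flat(\Jac(f),f)$ via the chain rule, invoke Lemma \ref{lem:pb} (using hypothesis \eqref{mthm.singloc}) to identify $\ker\alpha$ with $\vp^{*}\Der_S(-\log f)$, and check that vertical fields and Kodaira--Spencer lifts $E_j$ of a basis of $\Der_S(-\log f)$ are logarithmic for $f\vp$. But this only gives the containment $T^{0}_{X/S}+\sum_j\calox E_j\subseteq\Der_X(-\log f\vp)$, and the theorem needs equality; that is exactly the step you defer (``the main obstacle I anticipate'') rather than prove, so the decomposition \eqref{al:altderxeqn} is asserted, not established. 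The paper closes this gap with the six-term kernel--cokernel exact sequence of the composite $\gamma=\alpha\beta$: exactness at $\ker\alpha$ says an element of $\ker\alpha\cong\vp^{*}\Der_S(-\log f)$ comes from $\ker\gamma=\Der_X(-\log f\vp)$ if and only if its image under the connecting map $\omega:\ker\alpha\to\coker\beta=T^{1}_{X/S}$ vanishes; since $\omega(1\otimes D)=\delta_{KS}(D)$, hypothesis \eqref{mthm.ks} is precisely $\omega=0$, yielding the short exact sequence $0\to T^{0}_{X/S}\to\Der_X(-\log f\vp)\to\vp^{*}\Der_S(-\log f)\to 0$, which splits because the right-hand term is free. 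The same can be done by hand with your ingredients: given $\xi\in\Der_X(-\log f\vp)$ with multiplier $h$, push $(\xi,h)$ by $\beta$ into $\ker\alpha$, expand in the basis $1\otimes e_j$ supplied by Lemma \ref{lem:pb}, and subtract the corresponding combination of the $E_j$ to land in $\ker\Jac(\vp)=T^{0}_{X/S}$. Without some such argument the proof is incomplete.

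Second, your fallback route to freeness is incorrect as stated. Aleksandrov's criterion (Proposition \ref{prop:aleks}) requires the singular locus of $V(g)$ itself to be Cohen--Macaulay of codimension $2$; the scheme $\vp^{-1}(\Sigma)$, whose projective dimension you compute from Lemma \ref{lem:pb}, is not that singular locus. Indeed $\Sing V(f\vp)$ also contains critical points of $\vp$ lying over $V(f)$, and in the generality of Theorem \ref{altmthm} --- where only $T^{0}_{X/S}$ is assumed free, not $T^{1}_{X/S}$ Cohen--Macaulay of codimension $2$ --- there is no control over that locus; gaining such control is exactly what the stronger hypothesis of Theorem \ref{mthm} buys, via the second short exact sequence $0\to T^{1}_{X/S}\to\calox/I\to\calo_{\vp^{-1}(\Sigma)}\to 0$. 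Fortunately the detour is unnecessary: once the direct sum decomposition is in place, $\Der_X(-\log g)=\Der_X(-\log f\vp)$ by Lemma \ref{lemma:notreduced}, and this module is free because both summands are --- $T^{0}_{X/S}$ by hypothesis \eqref{mthm.t0xsiscm} and $\vp^{*}\Der_S(-\log f)$ as the pullback of a free module --- so the reduced germ $g$ defines a free divisor by definition.
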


If $\Sigma=\emptyset$, then 
by our convention on the codimension of the empty set,
\eqref{mthm.singloc} is satisfied.

\begin{proof}
The three $\calox$--linear maps:
\begin{itemize}
\item 
$\alpha =  \vp^{\flat}(\Jac(f),f): T^{0}_{S}(\calox)\oplus \calox\to \calox$ as above,
\item
$\beta = \Jac(\vp)\oplus \id_{\calox}: T^{0}_{X}\oplus \calox\to T^{0}_{S}(\calox)\oplus\calox$, and
\item
$\gamma=(\Jac(f\vp),f\vp):T^{0}_{X}\oplus\calox \to \calox$
\end{itemize}
satisfy $\gamma=\alpha\beta$ and give rise to the following diagram relating kernels and cokernels 
of these maps, where $I$ is the ideal generated by
$f\vp$ and its partial derivatives.
\begin{align*}
\xymatrix{
&&&0\ar[d]&&0\\
&&&\vp^{*}\Der_{S}(-\log
f)\ar[d]^-{\theta}\ar[r]^-{\omega}&T^{1}_{X/S}\ar[dd]\ar[ru]\\
&&&T^{0}_{S}(\calox)\oplus\calox\ar[d]^{\alpha}\ar[ru]_-{\delta+0}\\
0\ar[r]&\Der_{X}(-\log f\vp)\ar[r]^-{\tilde\sigma_{f\vp}}\ar[rruu]&
T^{0}_{X}\oplus \calox\ar[ru]^{\beta}\ar[r]^-{\gamma}\ar[r]&
\calox\ar[r]\ar[d]&\calox/I\ar[r]\ar[ld]&0\\
&T^{0}_{X/S}\ar[u]\ar[ru]&&\calo_{\vp^{-1}(\Sigma)}\ar[d]\ar[ld]\\
0\ar[ru]&0\ar[u]&0&0
}
\end{align*}

The horizontal exact sequence involving $\gamma$
is the one described in \ref{sit:definederlog} and \ref{sit:saitomatrix}
that defines 
$\Der_X(-\log f\vp)$
and 
the singular locus
of $V(f\vp)$ as a scheme.
The vertical sequence involving $\alpha$ is exact by 
\eqref{mthm.singloc} as explained in 
Lemma \ref{lem:pb} above.
As $X$ is smooth,  $T^{1}_{X}=0$, and the diagonal exact sequence including $\beta$ is the direct sum of
the identity on $\calox$ and (the initial segment of) the exact dual Zariski--Jacobi sequence for $\vp$ as
recalled in \ref{sit:ZJ} above.
The remaining arrows form the Ker--Coker exact sequence defined
by $\gamma=\alpha\beta$.

The construction of this exact sequence
shows that $\omega$, the $\calox$--linear map connecting
$\ker(\alpha)$ to $\coker(\beta)$, satisfies
$\omega=(\delta+0)\circ \theta$.
Hence if 
$D\in \Der_{S}(-\log f)\subseteq T^{0}_{S}$ and
$1\otimes D\in \calox\otimes_{\calos}  \Der_{S}(-\log f)$ is the
pulled-back vector field in $\vp^{*}\Der_{S}(-\log f)$,
then
$\omega(1\otimes D)=
((\delta+0)\circ \theta)(1\otimes D)=
\delta(T^0_S(\vp^\flat)(D))=
\delta_{KS}(D)$.
Our assumption 
\eqref{mthm.ks}
is hence equivalent to $\omega=0$.
Thus,
the Ker--Coker exact sequence defined by $\gamma=\alpha\beta$ splits
into two short exact sequences for the kernels, respectively cokernels,
\begin{align}
\label{eqn:firstseq}
\xymatrix{
0\ar[r]&T^{0}_{X/S}\ar[r]&\Der_{X}(-\log f\vp)\ar[r]&\vp^{*}\Der_{S}(-\log f)\ar[r]&0}
\end{align}
and
\begin{align}
\label{eqn:secondseq}
\xymatrix{
0\ar[r]&T^{1}_{X/S}\ar[r]&\calox/I\ar[r]&\calo_{\vp^{-1}(\Sigma)}\ar[r]&0}.
\end{align}
Since
$\Der_{S}(-\log f)$ is a free $\calos$--module
by assumption, and thus $\vp^{*}\Der_{S}(-\log f)$ is a free
$\calox$--module, it follows that \eqref{eqn:firstseq} splits,
giving
the decomposition of $\Der_X(-\log f\vp)$ in 
\eqref{al:altderxeqn}.
By \eqref{mthm.t0xsiscm}
and Lemma \ref{lemma:notreduced},
$\Der_X(-\log g)=\Der_X(-\log f\vp)$ is a free $\calox$--module
and hence $g$ defines a free divisor.
The sequence \eqref{eqn:secondseq} will be used in the sequel.
\end{proof}

As condition \eqref{mthm.t0xsiscm}
of Theorem \ref{altmthm}
can be difficult to prove directly,
it is often easier to verify the following stronger
hypotheses;
of all our examples, only Example \ref{ex:square} applies Theorem \ref{altmthm}. 

\begin{theorem}
\label{mthm}
Let $\vp:X\to S$ be a morphism of smooth germs and let $f\in\calos$ define a free divisor 
$V(f)\subset S$ with singular locus $\Sigma\subset V(f)$. 
If both 
\begin{enumerate}[\rm(a)]\setcounter{enumi}{1}
\item 
the Kodaira--Spencer map $\delta_{KS}:T^{0}_{S}\to T^{1}_{X/S}$ vanishes on $\Der_{S}(-\log f)$, that is,
$\delta_{KS}\circ \sigma_{f} = 0$, and
\setcounter{enumi}{3}
\item
\label{mthm.t1xs}
$T^1_{X/S}$ is Cohen--Macaulay of codimension $2$,
\end{enumerate}
then $f\vp$ is reduced and defines a free divisor,
and $\Der_X(-\log f\vp)$ has the decomposition as in 
\eqref{al:altderxeqn} of Theorem \ref{altmthm}.
\end{theorem}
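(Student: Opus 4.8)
The plan is to deduce Theorem~\ref{mthm} from Theorem~\ref{altmthm}: hypothesis (b) is common to both, so it suffices to show that hypothesis (d) forces both remaining hypotheses \eqref{mthm.t0xsiscm} and \eqref{mthm.singloc} of Theorem~\ref{altmthm}, together with the nondegeneracy $f\vp\neq 0$; afterwards one must upgrade the conclusion, since Theorem~\ref{altmthm} only produces a reduction $g$ of $f\vp$, whereas here we claim $f\vp$ itself is reduced. I would first dispose of $f\vp\neq 0$: if $f\vp=0$ then $\vp$ is not dominant, so the generic rank of $\Jac(\vp)$ is less than $m=\dim S$, whence $T^1_{X/S}=\coker(\Jac(\vp))$ is nonzero at the generic point of $X$ and is supported in codimension $0$, contradicting (d).

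For \eqref{mthm.t0xsiscm}, recall that since $X$ and $S$ are smooth the dual Zariski--Jacobi sequence of \ref{sit:ZJ} truncates, using $T^1_X=0$, to a free resolution
\[
0\to T^0_{X/S}\to T^0_X\xrightarrow{\Jac(\vp)} T^0_S(\calox)\xrightarrow{\delta} T^1_{X/S}\to 0 .
\]
By (d) and the Auslander--Buchsbaum formula over the regular local ring $\calox$ one has $\pdim T^1_{X/S}=2$, so the second syzygy $T^0_{X/S}$ is free, giving \eqref{mthm.t0xsiscm}. For \eqref{mthm.singloc}, note $\codim\vp^{-1}(\Sigma)\le 2$ always; suppose it had a component $W$ of codimension $1$ with generic point $\eta$. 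Because $T^1_{X/S}$ is Cohen--Macaulay of codimension $2$ its support is pure of codimension $2$, so $\eta\notin\supp T^1_{X/S}$ and $(T^1_{X/S})_\eta=0$; hence $\vp$ is smooth at $\eta$. A smooth morphism preserves codimensions of preimages, so near $\eta$ the set $\vp^{-1}(\Sigma)$ has codimension $\codim\Sigma=2$, contradicting $\codim W=1$. Thus $\codim\vp^{-1}(\Sigma)=2$, which is \eqref{mthm.singloc}.

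With \eqref{mthm.t0xsiscm}, (b), \eqref{mthm.singloc} and $f\vp\neq 0$ verified, Theorem~\ref{altmthm} applies and yields both the decomposition \eqref{al:altderxeqn} and the short exact sequence
\[
0\to T^1_{X/S}\to\calox/I\to\calo_{\vp^{-1}(\Sigma)}\to 0 ,
\]
where $\calox/I$ is the singular scheme of $V(f\vp)$. To finish I would show $f\vp$ is reduced, i.e.\ $\codim V(I)\ge 2$. In this sequence $T^1_{X/S}$ is Cohen--Macaulay of codimension $2$ by (d), while $\calo_{\vp^{-1}(\Sigma)}$ is Cohen--Macaulay of codimension $2$ because Lemma~\ref{lem:pb} provides a length-$2$ free resolution of it. A standard depth count then forces the middle term $\calox/I$ to be Cohen--Macaulay of codimension $2$ as well, so $\codim V(I)\ge 2$ and, by the reducedness criterion in the proof of Proposition~\ref{prop:aleks}, $f\vp$ is reduced. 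Hence $g$ and $f\vp$ coincide up to a unit, and the stated conclusion follows.

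The main obstacle is establishing \eqref{mthm.singloc}: a priori the singular locus $\Sigma$ of $V(f)$ and the critical locus of $\vp$, that is $\supp T^1_{X/S}$, are unrelated, so the codimension of $\vp^{-1}(\Sigma)$ is not obviously governed by $T^1_{X/S}$. The localization-at-codimension-one-points argument is exactly what links purity of $\supp T^1_{X/S}$ to smoothness of $\vp$ there, and thence to the codimension of the preimage. A secondary subtlety, easy to overlook, is that Theorem~\ref{altmthm} only concludes about a reduction $g$, so the reducedness of $f\vp$ genuinely needs the separate Cohen--Macaulayness argument on the displayed short exact sequence.
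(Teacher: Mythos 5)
Your proposal is correct and follows essentially the same route as the paper's own proof: deduce the theorem from Theorem~\ref{altmthm} by using the dual Zariski--Jacobi sequence to get freeness of $T^0_{X/S}$, using the codimension-$2$ support of $T^1_{X/S}$ to obtain $f\vp\neq 0$ and condition \eqref{mthm.singloc} (your localization at codimension-one points is just a rephrasing of the paper's decomposition of $\vp^{-1}(\Sigma)$ over the critical locus and its complement), and finally using the short exact sequence $0\to T^1_{X/S}\to\calox/I\to\calo_{\vp^{-1}(\Sigma)}\to 0$ from the proof of Theorem~\ref{altmthm}, with both outer terms Cohen--Macaulay of codimension $2$, to conclude that $\calox/I$ is Cohen--Macaulay of codimension $2$ and hence that $f\vp$ is reduced. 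No gaps.
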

\begin{proof}
We check the conditions of Theorem \ref{altmthm}.
Condition \eqref{mthm.ks} is assumed.

By \eqref{mthm.t1xs} and the Auslander--Buchsbaum formula,
$\projdim(T^1_{X/S})=2$.
This implies
\eqref{mthm.t0xsiscm}, that $T^0_{X/S}$ is free,
as it is a second
syzygy module of $T^1_{X/S}$ via the dual Zariski--Jacobi sequence
for $\vp$.

Since $T^1_{X/S}$ is supported on the critical locus $C(\vp)$ of the map,
$\vp$ is smooth off a set of codimension $2$.
In particular, this implies that $f\vp$ is nonzero:
if $f\vp=0$, so $\mathrm{Image}(\vp)\subseteq V(f)$, then $\vp$ is
nowhere smooth.

For \eqref{mthm.singloc},
first note that the codimension of $\vp^{-1}(\Sigma)$ is $\leq 2$,
as the codimension cannot go up under pullback. 
Let $\vp'$ and $\vp''$ be the restriction of $\vp$ to $C(\vp)$ and its
complement in $X$.  Then
$\vp^{-1}(\Sigma)=(\vp')^{-1}(\Sigma)\cup (\vp'')^{-1}(\Sigma)$, both
of which have codimension $\geq 2$ in $X$:
the first is contained in $C(\vp)$,
and the second because $\Sigma$ has 
codimension $2$ in $S$ and $\vp''$ is smooth.
Thus we have \eqref{mthm.singloc}.

By Theorem \ref{altmthm} and its proof, 
$\Der_X(-\log f\vp)$ is free,
with the decomposition as in \eqref{al:altderxeqn}
and
the exact sequence
from \eqref{eqn:secondseq},
$$
\xymatrix{
0 \ar[r] & T^1_{X/S} \ar[r] & \calox/I \ar[r] &
\calo_{\vp^{-1}(\Sigma)} \ar[r] & 0
},$$
where $I$ is generated by $f\vp$ and its partial derivatives,
so that $\calox/I=\calo_{\Sing(V(f\vp))}$.
The outer terms $T^1_{X/S}$ and $\calo_{\vp^{-1}(\Sigma)}$ are
Cohen--Macaulay $\calox$--modules of codimension $2$ by
assumption \eqref{mthm.t1xs} and \eqref{mthm.singloc}, whence
$\calox/I$ is also a Cohen--Macaulay $\calox$--module of codimension
$2$.
Since 
$\codim(\calox/I)=2$, 
$f\vp$ is necessarily reduced and hence defines a free divisor.
\end{proof}

\begin{remark}
If Theorem \ref{mthm} applies to some $\vp$ and $f$ with
$S\cong \CC^2$,
then the Theorem easily produces many examples of free divisors in $X$.
Namely, 
whenever
$fg\in\calos$ is reduced and nonzero, then
$(fg)\vp$ defines a free divisor on $X$,
because 
any reduced plane curve in $S$ is a free divisor, and
condition \eqref{mthm.ks}
follows from
$\Der_S(-\log fg)\subseteq \Der_S(-\log f)$.
\end{remark}

As a first application we obtain a result originally observed by Mond and van Straten \cite[Remark 1.5]{MvS}.
\begin{theorem}
\label{fibres}
Let $C$ be the germ of an isolated complete intersection curve singularity. If $\vp:X\to S$ is any 
versal deformation of $C$, then the union of the singular fibres of
$\vp$, that is, the pullback along $\vp$ 
of the discriminant $\Delta\subset S$ in the base, is a free divisor.

More generally, if $f=0$ defines a free divisor in $S$ that contains the discriminant as a component, 
then its pre-image $f\circ \vp=0$ defines a free divisor in $X$.
\end{theorem}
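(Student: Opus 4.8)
The plan is to deduce Theorem \ref{fibres} as a direct application of Theorem \ref{mthm}. The key observation is that both statements follow once we verify the two hypotheses of Theorem \ref{mthm} for the map $\vp$: namely condition \eqref{mthm.t1xs}, that $T^1_{X/S}$ is Cohen--Macaulay of codimension $2$, and condition \eqref{mthm.ks}, that $\delta_{KS}$ vanishes on $\Der_S(-\log f)$.

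First I would establish condition \eqref{mthm.t1xs}. For an isolated complete intersection singularity, a versal deformation $\vp:X\to S$ is flat with isolated singular fibres, and classically $T^1_{X/S}$ is the relative $T^1$ that measures the obstruction to smoothness fibrewise. Because $C$ is an \emph{isolated complete intersection curve singularity}, its critical locus $C(\vp)$—equivalently the support of $T^1_{X/S}$—should have codimension $2$ in $X$: the fibres have dimension $1$, the total space $X$ has dimension $\dim S + 1$, and the non-smooth points occur in codimension $2$. Moreover, for an ICIS the relative $T^1$ is Cohen--Macaulay because it is (up to a shift/twist) the cokernel presented by the relative Jacobian, and the complete intersection structure forces the expected depth. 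I would cite the standard deformation theory of ICIS (e.g.\ \cite{GLS} or \cite{Looijenga}) to assert that $T^1_{X/S}$ is Cohen--Macaulay of codimension $2$; this is where the ICIS curve hypothesis is essential, since it is exactly the setting in which the relative $T^1$ is known to be maximal Cohen--Macaulay of the right codimension.

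Next I would verify condition \eqref{mthm.ks}. Here the discriminant $\Delta\subset S$ is precisely the locus over which $\vp$ is not smooth, i.e.\ the image of the support of $T^1_{X/S}$. A vector field $D\in\Der_S(-\log\Delta)$ is tangent to $\Delta$, and the key point—which is the deformation-theoretic content of versality—is that logarithmic vector fields along the discriminant of a versal deformation are exactly the \emph{liftable} ones, so $\delta_{KS}(D)=0$ for every $D\in\Der_S(-\log\Delta)$. For the first assertion, where $f$ defines $\Delta$ itself, this gives \eqref{mthm.ks} directly. For the more general assertion, where $V(f)$ is a free divisor containing $\Delta$ as a component, I would write $f = f_\Delta\cdot f'$ with $V(f_\Delta)=\Delta$, and use that $\Der_S(-\log f)\subseteq\Der_S(-\log f_\Delta)=\Der_S(-\log\Delta)$ (liftability of a subset follows from liftability of the larger set along the component $\Delta$, together with Lemma \ref{lemma:notreduced} to pass between $f_\Delta$ and a reduced equation of $\Delta$). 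Since every vector field logarithmic along $f$ is in particular logarithmic along the discriminant component, it lifts, giving \eqref{mthm.ks}.

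With both hypotheses in hand, Theorem \ref{mthm} applies and yields that $f\vp$ is reduced and defines a free divisor in $X$, which is exactly the conclusion. The main obstacle I anticipate is the verification of condition \eqref{mthm.t1xs}: proving that $T^1_{X/S}$ is Cohen--Macaulay of codimension $2$ is not a formal consequence of versality but relies genuinely on the structure of ICIS curves, and one must be careful that the \emph{curve} (one-dimensional fibre) hypothesis is what produces codimension exactly $2$ rather than something larger. The liftability statement \eqref{mthm.ks}, by contrast, is essentially built into the definition of versality via the Kodaira--Spencer map being the connecting homomorphism, so I expect it to be the easier of the two to justify rigorously.
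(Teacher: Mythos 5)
Your proposal is correct and takes essentially the same route as the paper: both arguments verify hypotheses \eqref{mthm.ks} and \eqref{mthm.t1xs} of Theorem \ref{mthm} by invoking the classical theory of versal deformations of isolated complete intersection curve singularities (the paper cites \cite[6.12, 6.13]{Looijenga} for smoothness of $X$ and for $T^1_{X/S}$ being Cohen--Macaulay of codimension $2$, and \cite{BEvB} for the fact that the kernel of $\delta_{KS}$ is exactly $\Der_S(-\log \Delta)$), and then treat the general case via the inclusion $\Der_S(-\log f)\subseteq \Der_S(-\log \Delta)$ when $\Delta$ is a component of $V(f)$. The one point to tighten is your remark that liftability of the logarithmic fields is ``essentially built into the definition of versality'': the inclusion $\Der_S(-\log\Delta)\subseteq \ker(\delta_{KS})$ is a genuine theorem requiring a citation (as the paper gives), not a formal consequence of the Kodaira--Spencer formalism.
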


\begin{proof}
It is well known (see \cite[6.13, 6.12]{Looijenga}) that $\Delta$ is a free divisor in a smooth germ $S$, that $X$ is smooth
as well, and that $T^{1}_{X/S}$ is 
a Cohen--Macaulay $\calox$--module of codimension two. Finally, in this case the kernel of the
Kodaira--Spencer map $\delta_{KS}:T^{0}_{S}\to T^{1}_{X/S}$ consists
precisely of the logarithmic vector fields along $\Delta$ (see
\cite[6.14]{Looijenga}) and so all the assumptions of
Theorem \ref{mthm} are satisfied
for $\Delta$ itself and then also for any free divisor in $S$ that contains $\Delta$ as a component.
\end{proof}

\begin{example}
A versal deformation of the plane curve
defined by $x_1^3+x_2^2$
is the map
$\vp:X=\CC^3\to S=\CC^2$
defined by
$\vp(x_1,x_2,s_1)=(s_1,x_1^3+x_2^2+s_1 x_1)$.
With coordinates $(s_1,s_2)$ on $S$,
the discriminant of $\vp$ is the free divisor
defined by
$\Delta=4s_1^3+27s_2^2$.
The module of liftable vector fields is
$\Der_S(-\log \Delta)$.
%
%
%
%
By Theorem \ref{fibres},
$$\Delta\vp=27x_1^6+54x_1^3 x_2^2+54 x_1^4 s_1+27 x_2^4+54 x_1 x_2^2 s_1+27 x_1^2 s_1^2+4 s_1^3$$
defines a free divisor on $X$, and the same is true for
the lift of any reduced plane curve containing $\Delta$ as a component,
for example $(\Delta\cdot s_2)\vp$.
Note that $\Delta\vp$ is equivalent to the classical swallowtail.
%
%
\end{example}

\begin{remark}
Note that Theorem \ref{fibres} can only hold for versal deformations of isolated complete intersection singularities on
{\em curves}. Indeed, for a versal deformation of any isolated complete intersection singularity
the corresponding module $T^{1}_{X/S}$ is Cohen--Macaulay, but of codimension equal to the
dimension of the singularity plus one (\cite[6.12]{Looijenga}).
\end{remark}

%

\section{Lifting Euler vector fields}
\label{sec:lifteuler}
Theorem \ref{mthm} requires that
all elements of $\Der_S(-\log f)$ lift.
This hypothesis may be relaxed,
at least 
for the Euler vector field of a weighted-homogeneous free divisor.
We first examine how general Theorem \ref{mthm} is in a
well-understood situation.

\begin{example}
\label{ex:fdalready}
Suppose that $\vp:X=\CC^n\to S=\CC$
(and hence $f\circ \vp$ for $f=s_1$) already defines a free divisor.
What is the content of Theorem \ref{mthm} in this case?

Here, $T^1_{X/S}\cong \coker \Jac(\vp)\cong \calox/J_\vp$, where
$J_\vp$ is the Jacobian ideal generated by the partial derivatives of $\vp$.
If $\vp\in J_\vp$,
equivalently, there exists an ``Euler-like'' vector field $\eta$ such that
$\eta(\vp)=\vp$,
then $T^1_{X/S}$ is Cohen--Macaulay of codimension $2$ by
Proposition \ref{prop:aleks} as $\vp$ defines a free divisor.
Moreover,
the vector field $s_1\frac{\partial}{\partial s_1}$
that generates $\Der_S(-\log s_1)$ lifts
if and only if $\vp\in J_\vp$.
Hence, the hypotheses of Theorem \ref{mthm} are satisfied exactly when 
$\vp\in J_\vp$, in which
case the conclusion says that
$\Der_X(-\log \vp)$ is the direct sum of $\calox\cdot \eta$ and the
(vertical) vector fields that 
annihilate $\vp$. 
%
\end{example}

A free divisor without an Euler-like vector field does not have this
direct sum decomposition.
Hence,
as this Example suggests,
we may weaken the lifting condition of Theorem \ref{mthm},
modify the algebraic condition,
and obtain a conclusion that lacks 
the direct sum decomposition as in
\eqref{al:altderxeqn} of Theorem \ref{altmthm}.

\begin{corollary}
\label{cor:mthm}
Let $\vp:X\to S$ be a morphism of smooth germs
with module $L=\ker(\delta_{KS})\subseteq T^0_S$ of liftable vector fields.
Let $f\in \calos$ define
a free divisor with singular locus $\Sigma\subset V(f)$.
Let $(w_1,\ldots,w_m)$ be a set of nonnegative integral weights for
the coordinates $(s_1,\ldots,s_m)$ on $S$.
Let $E=\sum_{i=1}^m w_i s_i\frac{\partial}{\partial s_i}\in T^0_S$ be the
corresponding Euler vector field,
so that $T^0_S(\vp^\flat)(E)=\sum_{i=1}^m w_i (s_i\circ\vp)\frac{\partial}{\partial
s_i}\in T^0_S(\calox)$.
If $f$ is weighted homogeneous of degree $d$ with respect to these weights, if
\begin{equation}
\label{eqn:cormthmn}
N=T^0_S(\calox)/(\mathrm{Image}(\Jac(\vp))+\calox\cdot
T^0_S(\vp^\flat)(E))
\end{equation}
is a Cohen--Macaulay $\calox$--module of codimension $2$,
and if
$\Der_S(-\log f)\subseteq L+\calos\cdot E$,
then $f\circ\vp$ defines a free divisor.
\end{corollary}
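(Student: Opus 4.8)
The plan is to deduce this corollary from Theorem \ref{altmthm} by relaxing the lifting hypothesis on $\Der_S(-\log f)$ using the Euler vector field. The key observation is that although not every logarithmic vector field need lift, condition \eqref{mthm.ks} can be recovered because $f$ is weighted homogeneous: by Euler's relation, $E(f)=d\cdot f$, so $E$ itself is (a scalar multiple away from) logarithmic along $f$ in the sense that $E(\log f)=d$ is a unit times $f/f$. More precisely, the hypothesis $\Der_S(-\log f)\subseteq L+\calos\cdot E$ says every logarithmic vector field is a sum of a liftable field and an $\calos$-multiple of $E$. First I would verify that the Kodaira--Spencer map vanishes on $\Der_S(-\log f)$ \emph{modulo the contribution of $E$}: for $\eta\in\Der_S(-\log f)$, write $\eta = \ell + a E$ with $\ell\in L$ and $a\in\calos$, so $\delta_{KS}(\eta)=\delta_{KS}(\ell)+\delta_{KS}(aE)=\delta_{KS}(aE)$, since $\delta_{KS}(\ell)=0$ by definition of $L$.

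The main structural move is to replace the target module $T^1_{X/S}$ in the argument by the quotient $N$ defined in \eqref{eqn:cormthmn}. Observe that $N$ is precisely the cokernel of the map $T^0_X\oplus\calox\to T^0_S(\calox)$ sending $(Z,c)\mapsto \Jac(\vp)(Z)+c\cdot T^0_S(\vp^\flat)(E)$; equivalently, $N=T^1_{X/S}/\calox\cdot\delta_{KS}(E)$, the quotient of $T^1_{X/S}$ by the image of the class of $E$. The point of quotienting by $E$ is exactly that, in the diagram of Theorem \ref{altmthm}, the connecting map $\omega$ composed with the projection $T^1_{X/S}\onto N$ now vanishes on all of $\vp^*\Der_S(-\log f)$: for $\eta=\ell+aE$, the image $\omega(1\otimes\eta)=\delta_{KS}(\eta)=\delta_{KS}(aE)=\vp^\flat(a)\cdot\delta_{KS}(E)$ lies in $\calox\cdot\delta_{KS}(E)$, hence maps to $0$ in $N$. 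So the plan is to run the Ker--Coker analysis of Theorem \ref{altmthm} with $N$ in place of $T^1_{X/S}$: the relevant complex has $N$ as the top cokernel and $\calo_{\vp^{-1}(\Sigma)}$ as before, and the vanishing of the induced connecting map splits off a short exact sequence $0\to N\to \calox/I\to\calo_{\vp^{-1}(\Sigma)}\to 0$.

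Next I would check the two inputs that Theorem \ref{altmthm}'s proof needs, now phrased through $N$. For freeness of $T^0_{X/S}$ (condition \eqref{mthm.t0xsiscm}): because $N$ is Cohen--Macaulay of codimension $2$ by hypothesis, the augmented complex computing $N$ is a free resolution of length two, and $T^0_{X/S}$, being the kernel of the first map in the dual Zariski--Jacobi sequence, appears as a high syzygy and is therefore free --- the argument is identical to the passage ``\eqref{mthm.t1xs} implies \eqref{mthm.t0xsiscm}'' in the proof of Theorem \ref{mthm}, reading $N$ for $T^1_{X/S}$. For the codimension of $\vp^{-1}(\Sigma)$ (condition \eqref{mthm.singloc}): since $N$ is supported where $\vp$ is non-smooth \emph{after allowing $E$ to lift}, its codimension-$2$ support forces $\vp$ to be smooth off a codimension-$2$ set, and the same decomposition $\vp^{-1}(\Sigma)=(\vp')^{-1}(\Sigma)\cup(\vp'')^{-1}(\Sigma)$ used in Theorem \ref{mthm} gives codimension $\geq 2$; combined with the automatic upper bound this yields exactly $2$. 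I expect the main obstacle to be bookkeeping the non-direct-sum conclusion: unlike Theorem \ref{altmthm}, here $\vp^*\Der_S(-\log f)$ need not split off $\Der_X(-\log f\vp)$, since the liftable lift of $\eta$ involves $E$ rather than $\eta$ itself. So the final step is to argue freeness of $\Der_X(-\log f\vp)$ directly from the Cohen--Macaulayness of $\calox/I=\calo_{\Sing(V(f\vp))}$ --- which follows from the split sequence above exactly as in Theorem \ref{mthm} --- and then invoke Proposition \ref{prop:aleks} together with Lemma \ref{lemma:notreduced} to conclude that $f\circ\vp$ defines a free divisor, \emph{without} claiming the decomposition \eqref{al:altderxeqn}.
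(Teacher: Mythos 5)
Your strategy is genuinely different from the paper's and is, in outline, viable: you modify the Ker--Coker diagram of Theorem \ref{altmthm} by adjoining the column $T^0_S(\vp^\flat)(E)$ to $\Jac(\vp)$, so that $N$ replaces $T^1_{X/S}$ as $\coker(\beta)$, note that the Euler relation $E(f)=d\cdot f$ keeps the composite equal to $(\Jac(f\vp),f\vp)$ up to a harmless free summand, observe (correctly) that $\omega$ followed by the projection $T^1_{X/S}\onto N$ kills $\vp^{*}\Der_S(-\log f)$ because $\delta_{KS}(\ell+aE)=\vp^\flat(a)\,\delta_{KS}(E)$, and then conclude from $0\to N\to\calox/I\to\calo_{\vp^{-1}(\Sigma)}\to 0$ and Proposition \ref{prop:aleks}. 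The paper instead introduces the auxiliary map $\theta:X\times\CC\to S$, $\theta(x,t)=(e^{w_1t}\vp_1(x),\ldots,e^{w_mt}\vp_m(x))$, shows $T^1_{(X\times\CC)/S}\cong N\otimes_{\calox}\calo_{X\times\CC}$, lifts $E$ (as $\partial/\partial t$) and homogeneous elements of $L$ through $\theta$, applies Theorem \ref{mthm} to $\theta$, and descends using $\theta^\flat(f)=e^{dt}\cdot\vp^\flat(f)$. Your in-place argument would even avoid the paper's homogeneity bookkeeping for elements of $L$ --- if it were complete.

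It is not complete: there is a genuine gap where you verify condition \eqref{mthm.singloc}, which you need before Lemma \ref{lem:pb} can identify $\ker(\alpha)$ with $\vp^{*}\Der_S(-\log f)$ and give Cohen--Macaulayness of $\calo_{\vp^{-1}(\Sigma)}$. You claim that the codimension-$2$ support of $N$ ``forces $\vp$ to be smooth off a codimension-$2$ set.'' That is false, and the paper's own example right after the corollary refutes it: for $\vp:\CC^3\to\CC^3$, $\vp=(x_1x_3+x_2^2,\,x_2,\,x_3)$, the critical locus is the hypersurface $\{x_3=0\}$, yet $N\cong\calox/(x_3,x_2^2)$ is Cohen--Macaulay of codimension $2$. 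This is exactly the situation the corollary is built for (e.g.\ equidimensional maps, whose nonempty critical locus is always a hypersurface, so $T^1_{X/S}$ can never be CM of codimension $2$ while $N$ can); hence the decomposition $\vp^{-1}(\Sigma)=(\vp')^{-1}(\Sigma)\cup(\vp'')^{-1}(\Sigma)$ borrowed from Theorem \ref{mthm} does not cover the codimension-$1$ part of $C(\vp)$ lying outside $\supp N$. What you must prove instead is: at a point $x\notin\supp N$ with $\vp(x)\in\Sigma$, either $d\vp_x$ is surjective, or $\mathrm{Image}(d\vp_x)+\CC\cdot E_{\vp(x)}=T_{\vp(x)}S$; in the second case one needs that $E$ is tangent to $\Sigma$ (true because $f$ is weighted homogeneous, so $\Sigma$ is invariant under the associated torus flow) to conclude that $\vp^{-1}(\Sigma)$ still has codimension $2$ near $x$ --- for instance by composing $\vp$ with the local projection along the flow of $E$, or by noting that the paper's $\theta$ is a submersion at $(x,0)$ and $\theta^{-1}(\Sigma)=\vp^{-1}(\Sigma)\times\CC$. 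This missing step is precisely the work the auxiliary map $\theta$ does in the paper. A smaller inaccuracy: condition \eqref{mthm.t0xsiscm} does not follow by ``reading $N$ for $T^1_{X/S}$,'' since the second syzygy of $N$ in your modified sequence is not $T^0_{X/S}$ but the larger module $\{(Z,a):\Jac(\vp)(Z)+a\,T^0_S(\vp^\flat)(E)=0\}$; this is harmless, though, because your final argument via Proposition \ref{prop:aleks} never uses freeness of $T^0_{X/S}$.
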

%
%
%
%
\begin{proof}
Let $t$ be a coordinate on $\CC$, and let $\vp=(\vp_1,\ldots,\vp_m)$.
Define  
$\theta:Y=X\times \CC\to S$ by
$\theta(x,t)=(e^{w_1 t}\cdot \vp_1(x),\ldots,e^{w_m t}\cdot \vp_m(x))$.
Since
\begin{align*}
\theta^\flat(f)(x,t)
&=f(e^{w_1 t}\cdot \vp_1(x),\ldots,e^{w_m t}\cdot \vp_m(x)) \\
&=e^{d t}\cdot \vp^\flat(f)(x),
\end{align*}
and $e^{d t}$ is a unit in $\calo_Y$, if Theorem \ref{mthm} applies to
$\theta$ and $f$,
then the lift of $f$ via
$\theta$ will give a free divisor $V(f\circ \vp)\times \CC$ in $Y$.
It follows that $f\circ \vp$ defines a free divisor in $X$.
It remains only to check the hypotheses of the Theorem.

A matrix representation of $\Jac(\theta)$ is
\begin{equation}
\label{eqn:jactheta}
\begin{pmatrix}
e^{w_1 t}\frac{\partial \vp_1}{\partial x_1} &\cdots
  & e^{w_1 t}\frac{\partial \vp_1}{\partial x_n}& w_1 e^{w_1 t}\vp_1 \\
\vdots & \ddots & \vdots & \vdots \\
e^{w_m t}\frac{\partial \vp_m}{\partial x_1} &\cdots
  & e^{w_m t}\frac{\partial \vp_m}{\partial x_n}& w_m e^{w_m t}\vp_m
\end{pmatrix},
\end{equation}
with values in $T^0_S(\caloy)$.
The isomorphism $\psi:T^0_S(\caloy)\to T^0_S(\caloy)$ with $\psi(\frac{\partial}{\partial s_i})=e^{-w_i t}\frac{\partial}{\partial s_i}$
shows that
deleting the exponential coefficients in \eqref{eqn:jactheta} gives an
isomorphic cokernel.
Thus, 
$T^1_{Y/S}\cong
\coker \Jac(\theta)$ is isomorphic
to $N\otimes_{\calox} \caloy$, 
and hence a
Cohen--Macaulay $\caloy$--module of codimension $2$.
This establishes condition \eqref{mthm.t1xs} of Theorem \ref{mthm}.

Now 
let $\eta=\sum_{i=1}^m a_i\frac{\partial}{\partial s_i}\in T^0_S$ be homogeneous
of degree $\lambda$, in that $\lambda=\deg(a_i)-w_i$ for $i=1,\ldots,m$.
Suppose that $\eta$ lifts under $\vp$ to
some $\xi=\sum_{j=1}^n b_j\frac{\partial}{\partial x_j}\in T^0_X$,
so that
$a_i\circ \vp=\sum_{j=1}^n b_j \frac{\partial \vp_i}{\partial
x_j}$ for $i=1,\ldots,m$.
Let $\xi'\in T^0_Y$ have the same defining equation.
Then
\begin{align*}
\Jac(\theta)\left( e^{\lambda t}\cdot \xi'\right)
&=
\sum_{i=1}^m e^{(\lambda+w_i)t}
  \left( \sum_{j=1}^n b_j \frac{\partial \vp_i}{\partial x_j} \right)
  \frac{\partial}{\partial s_i} \\
&=
\sum_{i=1}^m e^{\deg(a_i)t}\cdot
  (a_i\circ \vp)  
  \frac{\partial}{\partial s_i} \\
&=
\sum_{i=1}^m
a_i\circ \left(
  e^{w_1 t}\cdot \vp_1,
  \ldots,
  e^{w_m t}\cdot \vp_m\right) \frac{\partial}{\partial s_i}
\\
&=
T^0_S(\theta^\flat)(\eta).
\end{align*}
Thus, homogeneous elements of $L$ lift via $\theta$.
The Euler vector field
$E$ also lifts, as
$\Jac(\theta)(\frac{\partial}{\partial t})
=T^0_S(\theta^\flat)(E)$.
It follows that the module generated by homogeneous elements of
$L+\calos \cdot E$ lifts via $\theta$. 
Since $f$ is weighted homogeneous, $\Der_S(-\log f)$ has a homogeneous
generating set
and hence elements of $\Der_S(-\log f)$ lift via $\theta$,
verifying condition \eqref{mthm.ks} of Theorem \ref{mthm}.
\end{proof}

\begin{example}
When $S=\CC$ and $f=s_1$, as in Example \ref{ex:fdalready},
then
$\Der_S(-\log f)=\calos\cdot E$
and $N\cong \calox/(J_\vp,\vp)$.
In this case, 
Corollary \ref{cor:mthm} reduces to
the ``if'' direction of 
Proposition \ref{prop:aleks}.
\end{example} 

\begin{sit}
This corollary 
may create free divisors without an Euler--like vector field,
and 
may be applied to maps between spaces of the same
dimension.
\end{sit}

%
%
\begin{example}
%
%
%
Let $\vp:X=\CC^3\to S=\CC^2$ be defined by
$\vp(x_1,x_2,x_3)=(x_1^2+x_2^3,x_2^2+x_1x_3)$,
and let $f=s_1s_2(s_1+s_2)$.
Let $L$ be the module of vector fields liftable through $\vp$.
Although $T^1_{X/S}$ is Cohen--Macaulay of codimension $2$,
$\Der_S(-\log f)\nsubseteq L$.
For weights $w_1=w_2=1$,
we have
$\Der_S(-\log f)\subseteq L+\calos\cdot E$,
and
the module of \eqref{eqn:cormthmn} is also Cohen--Macaulay of
codimension $2$. 
By Corollary \ref{cor:mthm},
\begin{align*}
f\circ \vp = (x_1^2+x_2^3)(x_2^2+x_1x_3)(x_1^2+x_2^3 + x_2^2+x_1x_3)
\end{align*}
 defines a free divisor;
it has no Euler-like vector field.
\end{example}

\begin{example}
Let $\vp:X=\CC^3\to S=\CC^3$ be defined by
$\vp(x_1,x_2,x_3)=(x_1x_3+x_2^2,x_2,x_3)$.
For $w_1=w_2=w_3=1$ the module of \eqref{eqn:cormthmn}
is Cohen--Macaulay of codimension $2$, although $T^1_{X/S}$
is not.
As $L+\calos \cdot E$ contains $\Der_S(-\log f)$ for, e.g.,
$f=s_1s_2s_3$ or $f=s_1s_3(s_1s_3-s_2^2)$,
by Corollary \ref{cor:mthm}
each such $f\circ \vp$ defines a free divisor in $X$.
\end{example}

\begin{remark}
If $f$ is multi-weighted homogeneous, that is, weighted homogeneous of
degree $d_k$ with respect to weights $(w_{1k},\ldots,w_{mk})$ for
$k=1,\ldots,p$
(or, $f=0$ is invariant under the action of an algebraic $p$-torus),
then  
a version of Corollary \ref{cor:mthm} holds, with 
$E$ replaced by the $p$ Euler vector fields. 
To adapt the proof, let
$\theta:X\times \CC^p\to S$ be defined by
$$\theta(x,t)=\left(
  e^{\sum_{k=1}^p w_{1k} t_k} \cdot \vp_1(x),
\cdots,
  e^{\sum_{k=1}^p w_{mk} t_k} \cdot \vp_m(x)
 \right),$$
for $\vp=(\vp_1,\ldots,\vp_m)$,
and then consider multi-weighted homogeneous
vector fields.

For instance, if $f=s_1\cdots s_m$ is the normal crossings divisor
in $S=\CC^m$
with $m$ weightings of the form $(0,\cdots,1,\cdots,0)$,
then the Euler vector fields generate $\Der_S(-\log f)$, hence
the liftability condition is satisfied for any $\vp$.
The analog of the module $N$ of \eqref{eqn:cormthmn}
is the cokernel $N'$ of
$$
A=\begin{pmatrix}
\frac{\partial \vp_1}{\partial x_1} &\cdots & \frac{\partial \vp_1}{\partial x_n}& \vp_1 & 0 & \cdots & 0 \\
\frac{\partial \vp_2}{\partial x_1} &\cdots & \frac{\partial
\vp_2}{\partial x_n}& 0 & \vp_2 & \cdots & 0 \\
\vdots & \ddots & \vdots & \vdots & \vdots & \ddots & \vdots \\
\frac{\partial \vp_m}{\partial x_1} &\cdots & \frac{\partial
\vp_m}{\partial x_n}& 0 & 0 & \cdots & \vp_m
\end{pmatrix}.
$$
When each $\vp_i$ is nonzero,
note that
$$\ker(A)
\cong \cap_{i} \Der_X(-\log \vp_i)
=\Der_X(-\log \vp_1\cdots \vp_m)
=\Der_X(-\log \vp^\flat(f)).$$
Then $N'$ is Cohen--Macaulay of codimension 2 if and only if
$\projdim(N')=2$ and $\dim(N')=\dim(X)-2$; although the
former is enough to ensure $\ker(A)$ is free,
the latter ensures $\vp_1\cdots \vp_m$
is reduced and nonzero.
\end{remark}

\begin{remark}
A result similar to Corollary \ref{cor:mthm}
may be obtained by 
applying Theorem
\ref{altmthm} instead of Theorem \ref{mthm}.
\end{remark}

\section{Adding components and dimensions}
\label{sec:addcompdim}
We now examine a way to add components to a free divisor on
$\CC^m$ to produce a free divisor
on $\CC^m\times \CC^n$.
Use coordinates $(x_1,\ldots,x_m)$ and $(y_1,\ldots,y_n)$ on $\CC^m$ and $\CC^n$
respectively.

For an $\calox$--ideal $I$ on a smooth germ $X$, define
the $\calox$--module of logarithmic vector fields by
$$\Der_X(-\log I)=\{\eta\in \Der_X: \eta(I)\subseteq I\}.$$
This agrees with our earlier definition for hypersurfaces.

\begin{proposition}
\label{prop:ffstar}
Let $I=(g_1,\ldots,g_n)$ be a $\calo_{\CC^m}$--ideal such that
$\calo_{\CC^m}/I$ is Cohen--Macaulay of codimension 2.
If $h\in\calo_{\CC^m}$ defines a free divisor on $\CC^m$
with 
\begin{equation}
\label{eqn:containderlog}
\Der_{\CC^m}(-\log h)\subseteq \Der_{\CC^m}(-\log
I),
\end{equation}
then $h\cdot \left(\sum_{i=1}^n g_iy_i\right)$ defines a free divisor on
$X=\CC^m\times \CC^n$.
\end{proposition}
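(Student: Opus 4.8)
The plan is to realise $h\cdot(\sum_i g_iy_i)$ as a pullback $f\vp$ and then invoke Theorem \ref{mthm}. Set $S=\CC^m\times\CC$ with coordinates $(s_1,\ldots,s_m,u)$ and define $\vp\colon X=\CC^m\times\CC^n\to S$ by
$$\vp(x,y)=\bigl(x_1,\ldots,x_m,\ \sum\nolimits_{i=1}^n g_i(x)\,y_i\bigr),$$
so that $\vp^\flat(s_j)=x_j$ and $\vp^\flat(u)=\sum_i g_iy_i$. Take $f=h\cdot u\in\calos$. A direct computation of the logarithmic condition for the product $hu$ (the coefficient of $\partial/\partial u$ must be divisible by $u$, and the remaining part must be logarithmic for $h$) shows that $\Der_S(-\log(hu))$ is free of rank $m+1$, generated over $\calos$ by a basis $\delta_1,\ldots,\delta_m$ of $\Der_{\CC^m}(-\log h)$ together with $u\,\partial/\partial u$. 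Hence $f$ defines a free divisor on $S$, and $f\vp=h\cdot\sum_i g_iy_i$ is precisely the function in the statement. It remains to verify hypotheses \eqref{mthm.ks} and \eqref{mthm.t1xs} of Theorem \ref{mthm}.

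For \eqref{mthm.t1xs}, since $X$ and $S$ are smooth we have $T^1_{X/S}\cong\coker\Jac(\vp)$. Reading off the Jacobian, the image of $\partial/\partial x_k$ is $\partial/\partial s_k+(\sum_i (\partial g_i/\partial x_k)y_i)\,\partial/\partial u$ and the image of $\partial/\partial y_l$ is $g_l\,\partial/\partial u$; thus every $\partial/\partial s_k$ becomes a multiple of $\partial/\partial u$ in the cokernel, and $T^1_{X/S}\cong\calox/(g_1,\ldots,g_n)\calox$. Since $\calo_{\CC^m}/I$ is Cohen--Macaulay of codimension $2$ and $\calox$ is obtained from $\calo_{\CC^m}$ by adjoining the free variables $y_1,\ldots,y_n$, the extended module $\calox/I\calox$ is again Cohen--Macaulay of codimension $2$, establishing \eqref{mthm.t1xs}.

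For \eqref{mthm.ks} I would check that each generator of $\Der_S(-\log f)$ lifts across $\vp$, which by $\calos$--linearity of $\delta_{KS}$ gives $\delta_{KS}\circ\sigma_f=0$. The field $u\,\partial/\partial u$ lifts to the $y$-Euler field, since $\Jac(\vp)(\sum_l y_l\partial/\partial y_l)=(\sum_i g_iy_i)\,\partial/\partial u=T^0_S(\vp^\flat)(u\partial/\partial u)$. For a generator $\eta=\sum_j a_j\partial/\partial s_j$ of $\Der_{\CC^m}(-\log h)$ I look for a lift $\sum_k a_k\,\partial/\partial x_k+\sum_l d_l\,\partial/\partial y_l$; the $\partial/\partial s_j$-components then match automatically, while the $\partial/\partial u$-component vanishes provided
$$\sum\nolimits_l d_l g_l=-\sum\nolimits_i \eta(g_i)\,y_i.$$
This is exactly where hypothesis \eqref{eqn:containderlog} is used: $\eta\in\Der_{\CC^m}(-\log h)\subseteq\Der_{\CC^m}(-\log I)$ forces $\eta(g_i)\in I$, say $\eta(g_i)=\sum_l b_{il}g_l$, and then $d_l=-\sum_i b_{il}y_i$ solves the displayed equation. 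Hence $\eta$ lifts.

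With \eqref{mthm.ks} and \eqref{mthm.t1xs} verified, Theorem \ref{mthm} yields that $f\vp=h\cdot\sum_i g_iy_i$ is reduced and defines a free divisor on $X$, as claimed. I expect the main obstacle to be the lifting step for the logarithmic fields of $h$: the whole argument turns on converting the module containment \eqref{eqn:containderlog} into the concrete membership $\eta(g_i)\in I$, which supplies the $y$-coefficients $d_l$ of the lift. By contrast, the identification $T^1_{X/S}\cong\calox/I\calox$ and the Cohen--Macaulay bookkeeping are routine.
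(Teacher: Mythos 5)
Your proposal is correct and follows essentially the same route as the paper's proof: the same auxiliary quotient $\vp\colon\CC^m\times\CC^n\to\CC^m\times\CC$, the same choice $f=h\cdot u$ (the product--union free divisor), the identification $T^1_{X/S}\cong\calox/I\calox$ with Cohen--Macaulayness preserved under the flat extension, and the identical lifting computation in which \eqref{eqn:containderlog} supplies the coefficients $d_l=-\sum_i b_{il}y_i$ (the paper's $-\sum_{j,k}\gamma_{j,k}y_j\,\partial/\partial y_k$). No gaps; this is the paper's argument.
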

\begin{proof}
Let $S=\CC^m\times \CC$ have coordinates $(z_1,\ldots,z_m,t)$
and view $g_i$ and $h$ as elements of $\calos$.
Define $\varphi:X\to S$ by $\varphi(x,y)=(x,\sum_{i=1}^n g_i(x)\cdot y_i)$.
Let $f(z,t)=h(z)\cdot t$ define the free divisor in $S$ which is the
``product-union'' of $V(h)\subset \CC^m$ and $\{0\}\subset \CC$.
The statement will then follow from Theorem \ref{mthm} by lifting
$f$ via $\varphi$.

To check condition \eqref{mthm.t1xs} of the Theorem, observe that with
respect to the coordinates given, the
matrix form of the Jacobian is 
$$
\Jac(\varphi)=
\begin{pmatrix}
{\textrm{\Large I}}_{m,m} &\phantom{M} & &{\textrm{\Large 0}}_{m,n} & \\
*   &  & g_1 & \cdots & g_n
\end{pmatrix},$$
where the subscripts on $I$ and $0$ denote the sizes of identity
and zero blocks respectively.
In particular,
$T^1_{X/S}\cong \coker \Jac(\varphi)$ is isomorphic to
$\calox/(I\otimes_{\calo_{\CC^m}} \calox)
\cong (\calo_{\CC^m}/I)\otimes_{\calo_{\CC^m}} \calox$.
Since $\calo_{\CC^m}/I$ is a Cohen--Macaulay $\calo_{\CC^m}$--module of
codimension 2, by flatness of $\calox$ over $\calo_{\CC^m}$ it follows
that $T^1_{X/S}$ is a Cohen--Macaulay $\calox$--module of the same codimension.

For \eqref{mthm.ks}, $\Der(-\log f)$ is generated by elements of
$\Der(-\log h)$ extended to $S$ with $0$ as the coefficient of
$\frac{\partial}{\partial t}$, together with $t\frac{\partial}{\partial t}$.
The latter lifts:
$$
\Jac(\varphi)\left(\sum_{i=1}^n y_i \frac{\partial}{\partial
y_i}\right)
=\left(\sum_{i=1}^n g_i y_i\right)\frac{\partial}{\partial t}
=T^0_S(\vp^\flat)\left(t\frac{\partial}{\partial t}\right).
$$
Now, 
if $\eta=\sum_{i=1}^m a_i \frac{\partial}{\partial z_i}\in\Der_{\CC^m}$ is
logarithmic for $I$, then
there exist $\gamma_{j,k}\in\calo_{\CC^m}$ such that
$\eta(g_j)=\sum_{k=1}^n \gamma_{j,k}\cdot g_k$ for all $j$. 
Then $\eta$ extended to $S$ lifts as well:
\begin{align*}
\Jac(\varphi)&\left(
\sum_{i=1}^m a_i \frac{\partial}{\partial x_i}
-\sum_{j,k=1}^n \gamma_{j,k}y_j\frac{\partial}{\partial y_k}
\right)
\\
&= 
\sum_{i=1}^m a_i \left(\frac{\partial}{\partial z_i}
  +\left(\sum_{j=1}^n \frac{\partial g_j}{\partial z_i} y_j \right)
  \frac{\partial}{\partial t} \right)
-
\left(\sum_{j,k=1}^n \gamma_{j,k} g_k y_j\right)\frac{\partial}{\partial t}
\\
&= 
\sum_{i=1}^m a_i \frac{\partial}{\partial z_i}
+
\left(\sum_{i=1}^m \sum_{j=1}^n a_i\frac{\partial g_j}{\partial z_i} y_j
\right)
\frac{\partial}{\partial t}
-
\left(\sum_{j=1}^n \eta(g_j) y_j \right)\frac{\partial}{\partial t}
\\
&=
\sum_{i=1}^m a_i \frac{\partial}{\partial z_i}
+
\left(
\sum_{j=1}^n \eta(g_j) y_j \right)
\frac{\partial}{\partial t}
-
\left(\sum_{j=1}^n \eta(g_j) y_j \right)\frac{\partial}{\partial t}
\\
&
=
T^0_S(\vp^\flat)(\eta).
\end{align*}
In view of assumption \eqref{eqn:containderlog}, thus all
generators of $\Der(-\log f)$ lift.
\end{proof}

\begin{remark}
By the form of $\Jac(\vp)$ in the proof,
the (free module of) vertical vector fields of $\vp$ are
generated by the $\calo_{\CC^m}$--syzygies of $\{g_1,\ldots,g_n\}$.
\end{remark}

\begin{remark}
There is no need for $(g_1,\ldots,g_n)$ to be a minimal
generating set.
\end{remark}

\begin{remark}
\label{rem:smoothffstar}
The conclusion of Proposition \ref{prop:ffstar} also holds if
$I=(1)$.
Then some $g_i$ is a unit in the local ring, and 
so a local change of coordinates of $X$ takes
$h\cdot \left(\sum_{i=1}^n g_i y_i\right)$ to 
$h\cdot y_1$, which defines a ``product-union'' of free divisors.
\end{remark}

\begin{sit}
To find an $h$ and $I$ that satisfy assumption \eqref{eqn:containderlog},
a natural approach is to use the ideal
$(J_h,h)$ defining the singular locus 
$\Sigma$
of $V(h)$.
In particular, we have the following generalization of 
the ``$ff^*$''
construction of Buchweitz--Conca (\cite[Theorem 8.1]{BC}), where we have
removed the hypothesis that $h$ be weighted homogeneous.
\end{sit}

\begin{corollary}
\label{cor:ffstar}
If $h\in\calo_{\CC^m}$ defines a
free divisor on $\CC^m$ and
$g_1,\ldots,g_n$ generate the $\calo_{\CC^m}$--ideal $I=(J_h,h)$, then
$h\cdot \left(\sum_{i=1}^n g_i y_i \right)$
defines a free divisor on $\CC^m\times \CC^n$.
In particular,
$h\cdot \left(hy_{m+1} +\sum_{i=1}^m \frac{\partial h}{\partial x_i} y_i \right)$
always defines a free divisor on $\CC^m\times \CC^{m+1}$,
and if $h\in J_h$ then 
$h\cdot \left(\sum_{i=1}^m \frac{\partial h}{\partial x_i} y_i \right)$
defines a free divisor on $\CC^m\times \CC^m$.
\end{corollary}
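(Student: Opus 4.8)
The plan is to derive Corollary~\ref{cor:ffstar} directly from Proposition~\ref{prop:ffstar} by checking that the ideal $I=(J_h,h)$ meets the proposition's two hypotheses: that $\calo_{\CC^m}/I$ is Cohen--Macaulay of codimension~$2$, and that the containment \eqref{eqn:containderlog} holds. For the first, observe that $(J_h,h)$ is exactly the ideal defining the singular locus $\Sigma$ of $V(h)$ as a scheme, which appears as $\calo_\Sigma$ in the rightmost column of Figure~\ref{fddiag}. Since $h$ defines a free divisor, Aleksandrov's characterization (Proposition~\ref{prop:aleks}) tells us precisely that $\Sigma$ is Cohen--Macaulay of codimension~$2$ in $\CC^m$, which is the needed condition on $\calo_{\CC^m}/I$.

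For the containment \eqref{eqn:containderlog}, I would argue that every logarithmic vector field $\eta\in\Der_{\CC^m}(-\log h)$ automatically preserves the ideal $I=(J_h,h)$. This is the heart of the matter and the step I expect to require the most care. The point is that $\eta(h)=h\cdot(\eta(\log h))$ lies in $(h)\subseteq I$ by the very definition of being logarithmic, so $\eta$ preserves the principal part. For the Jacobian part, one differentiates the logarithmic relation $\eta(h)=ch$ (for some $c\in\calo_{\CC^m}$) with respect to each coordinate: applying $\partial/\partial x_j$ to $\sum_i a_i \frac{\partial h}{\partial x_i}=ch$ and using that $\eta$ acts as a derivation, one finds that $\eta\bigl(\frac{\partial h}{\partial x_j}\bigr)$ is a combination of the $\frac{\partial h}{\partial x_i}$ and of $h$ itself, hence lies in $I$. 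This shows $\eta(I)\subseteq I$, giving $\Der_{\CC^m}(-\log h)\subseteq\Der_{\CC^m}(-\log I)$. With both hypotheses verified, Proposition~\ref{prop:ffstar} immediately yields that $h\cdot\bigl(\sum_{i=1}^n g_i y_i\bigr)$ defines a free divisor on $\CC^m\times\CC^n$.

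The two ``in particular'' statements are then specializations. The ideal $I=(J_h,h)$ is generated by the $m$ partials $\frac{\partial h}{\partial x_1},\ldots,\frac{\partial h}{\partial x_m}$ together with $h$, so taking $n=m+1$ with generators $g_i=\frac{\partial h}{\partial x_i}$ for $i\le m$ and $g_{m+1}=h$ produces $h\cdot\bigl(h\,y_{m+1}+\sum_{i=1}^m \frac{\partial h}{\partial x_i}y_i\bigr)$ as a free divisor on $\CC^m\times\CC^{m+1}$; since the remark following the proposition notes that the generating set need not be minimal, no issue arises from redundancy. For the final case, if $h\in J_h$ then $h$ is already in the ideal generated by its own partials, so $I=(J_h,h)=J_h=(\frac{\partial h}{\partial x_1},\ldots,\frac{\partial h}{\partial x_m})$ is generated by the $m$ partials alone, and applying the proposition with $n=m$ and $g_i=\frac{\partial h}{\partial x_i}$ yields that $h\cdot\bigl(\sum_{i=1}^m \frac{\partial h}{\partial x_i}y_i\bigr)$ defines a free divisor on $\CC^m\times\CC^m$.

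The only genuine obstacle is the logarithmic-preserves-Jacobian-ideal step; everything else is bookkeeping. I would want to make sure the differentiation argument is clean, perhaps phrasing it invariantly: the Lie derivative along $\eta$ commutes appropriately with the exterior derivative, so that $\eta$ preserving $V(h)$ forces it to preserve the singular subscheme $\Sigma=V(J_h,h)$, which is intrinsic to $V(h)$. Stating it this way avoids coordinate computations and makes the containment $\Der_{\CC^m}(-\log h)\subseteq\Der_{\CC^m}(-\log I)$ transparent.
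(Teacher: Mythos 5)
Your proposal follows the paper's own proof almost exactly: reduce to the first assertion, use Aleksandrov's characterization (Proposition~\ref{prop:aleks}) to get that $\calo_{\CC^m}/I$ is Cohen--Macaulay of codimension $2$, verify the containment \eqref{eqn:containderlog} by the chain-rule computation (the paper states this step in one line, ``as is easily seen from the chain rule''; your differentiation of $\eta(h)=ch$ is precisely that computation, carried out correctly), and then invoke Proposition~\ref{prop:ffstar}. Your treatment of the two ``in particular'' specializations is also exactly what the paper intends by ``the rest follows from it,'' and your appeal to the remark that the generating set need not be minimal is the right justification.

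There is, however, one case you have not covered, and the paper does: when $V(h)$ is \emph{smooth}. Then $\Sigma=\emptyset$ and $I=(J_h,h)=(1)$, so $\calo_{\CC^m}/I=0$. Your claim that Proposition~\ref{prop:aleks} gives ``precisely'' that $\Sigma$ is Cohen--Macaulay of codimension $2$ breaks down here: the zero module has empty support, and the hypothesis of Proposition~\ref{prop:ffstar} is not literally satisfied (this is why the paper states Remark~\ref{rem:smoothffstar} separately, with the word ``also''). The paper's proof explicitly splits off this case: if $V(h)$ is smooth, some $g_i$ is a unit, and a local change of coordinates on $\CC^m\times\CC^n$ takes $h\cdot\bigl(\sum_{i=1}^n g_i y_i\bigr)$ to $h\cdot y_1$, a product-union of free divisors, which is free; only in the non-smooth case does it run the argument you give. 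To make your proof complete you should either insert this case distinction or justify a convention under which the zero module counts as Cohen--Macaulay of codimension $2$ and check that the proofs of Proposition~\ref{prop:ffstar} and Theorem~\ref{mthm} go through with $T^1_{X/S}=0$; the paper chooses the first route, and it is the cleaner one.
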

\begin{proof}
It is enough to prove the first assertion, as the rest follows from it.
Let $\Sigma$ be the singular locus of $V(h)$, defined by $I$.
If $V(h)$ is smooth, then $I=(1)$ and we may apply Remark
\ref{rem:smoothffstar}.
Otherwise, $\calo_{\CC^m}/I$ is Cohen--Macaulay of codimension $2$ by
Proposition \ref{prop:aleks}.
Any vector field that is logarithmic to $V(h)$ is also logarithmic to
$I$, as is easily seen from the product rule.
Now apply Proposition \ref{prop:ffstar}.
\end{proof}

However, this is not the only way to find a satisfactory $h$ and $I$.

\begin{example}
Let $M=M_{n,n}$ be the space of $n\times n$ complex matrices with coordinates $\{x_{ij}\}$,
let $N=M_{n-1,n}$,
and let
$\pi:M\to N$ be the projection that deletes the last row.
Differentiate 
$\rho:\GL({n-1},\CC)\times\GL({n},\CC)\to\GL(N)$
defined by $\rho(A,B)(X)=AXB^{-1}$
to obtain a finite-dimensional Lie algebra 
$\fg$ of linear vector fields on $N$.
Let $D\subset \Der_N$ be the $\mathcal O_{N}$--submodule generated
by $\fg$.
Let $f$ define a free divisor on $N$ for which
$\Der_N(-\log f)\subseteq D$;
for instance, $f$ could be a linear free divisor on $N$ obtained by
restricting $\rho$ to an appropriate subgroup.

Now $\rho$ leaves invariant 
$N_0=\{X: \rank(X)<n-1\}\subset N$, and hence
all elements of $\fg$, $D$, and $\Der_N(\log f)$ are tangent to the
variety $N_0$.
Note that
$N_0$ is Cohen--Macaulay of codimension $2$ and defined by
$I=((-1)^{n+1} g_1,\ldots,(-1)^{n+n} g_n)$, where 
$g_i:N\to\CC$ deletes column $i$ and takes the determinant.
Since
$\sum_{i=1}^n (-1)^{n+i} g_i x_{ni}=\det$ on $M$,
by Proposition \ref{prop:ffstar}, 
$(f\circ \pi)\cdot \det$ defines a free divisor on $M$.
By the lifts in the proof and the observation that
the vertical vector fields are generated by linear vector fields
(e.g., by Hilbert--Burch),
we see that if $f$ defines a linear free divisor on $N$ then 
$(f\circ \pi)\cdot \det$ defines a linear free divisor on $M$.
(This linear free divisor case partially recovers
\cite[Prop.~5.3.7]{Pike}.)

As a concrete example, 
for the linear free divisor on $M_{2,3}$ defined by
$$f=x_{11}x_{12}
\begin{vmatrix} x_{11} & x_{12} \\ x_{21} & x_{22} \end{vmatrix}
\begin{vmatrix} x_{12} & x_{13} \\ x_{22} & x_{23} \end{vmatrix},$$
$(f\circ \pi)\cdot \det$ defines a linear free divisor
$D$ on $M_{3,3}$,
part of the ``modified LU'' series
of \cite[Theorem 7.1]{DP} or \cite[\S5.1]{Pike}.
In fact, $D$ may be constructed from $\{x_{11}=0\}\subset M_{1,1}$
by repeatedly applying Proposition \ref{prop:ffstar}, as, e.g.,
$\Der_{M_{2,2}}(-\log
(x_{11}x_{12}(x_{11}x_{22}-x_{12}x_{21})))\subseteq
\Der_{M_{2,2}}(-\log (x_{12},x_{22}))$.
\end{example}

\section{The case of maps
\texorpdfstring{$\vp:\CC^{n+1}\to\CC^{n}$}{phi:C{\textasciicircum}(n+1)->C{\textasciicircum}n}}
\label{sec:cnp1tocn}
We now show that for
germs $\vp:X=\CC^{n+1}\to S=\CC^{n}$ with critical set of
codimension $2$,
the $\mathcal O_{X}$--module
$T^1_{X/S}$ is Cohen--Macaulay of codimension
$2$.
In fact, this is the idea behind Theorem \ref{fibres}, concerning the versal
deformations of isolated complete intersection curve singularities.

\begin{proposition}
\label{prop:nnmo}
Let $\varphi:X=\CC^{n+1}\to S=\CC^{n}$ be
holomorphic with critical set $C(\varphi)\subseteq \CC^{n+1}$.
If $C(\varphi)$ is nonempty and has codimension 2, then $T^1_{X/S}$ is a
Cohen--Macaulay
$\calox$--module of codimension 2.
The vertical vector fields form the free $\calox$--module of rank $1$
generated by
$\eta=\sum_{i=1}^{n+1} (-1)^i d_i \frac{\partial}{\partial x_i}$,
where $d_i$ is the determinant of $\Jac(\varphi)$ with column $i$
deleted.
\end{proposition}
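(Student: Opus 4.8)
The plan is to read off $T^{1}_{X/S}$ and $T^{0}_{X/S}$ from the truncated Zariski--Jacobi sequence and then to recognize the codimension hypothesis on $C(\vp)$ as exactly the condition making the first Buchsbaum--Rim complex of the Jacobian a free resolution. Since $X=\CC^{n+1}$ and $S=\CC^{n}$ are both smooth, the dual Zariski--Jacobi sequence of \ref{sit:ZJ} truncates to
\[
0\to T^{0}_{X/S}\to \calox^{n+1}\xrightarrow{\Jac(\vp)}\calox^{n}\to T^{1}_{X/S}\to 0,
\]
where $\Jac(\vp):\calox^{n+1}\to\calox^{n}$ is the $n\times(n+1)$ Jacobian matrix $(\partial\vp_{j}/\partial x_{i})$ in the standard coordinate bases. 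Thus $T^{1}_{X/S}\cong\coker\Jac(\vp)$ and $T^{0}_{X/S}\cong\Ker\Jac(\vp)$. The map $\vp$ fails to be smooth exactly where $\Jac(\vp)$ drops rank, so $C(\vp)=V(I)$ for the ideal $I=(d_{1},\dots,d_{n+1})$ of maximal minors of $\Jac(\vp)$; by hypothesis $\codim I=2$, which is precisely the generic (and maximal) codimension $(n+1)-n+1$ of the maximal minors of an $n\times(n+1)$ matrix.

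The central step is to show that $\eta=\sum_{i=1}^{n+1}(-1)^{i}d_{i}\,\partial/\partial x_{i}$ generates the kernel. First, $\Jac(\vp)(\eta)=0$: the $j$-th component $\sum_{i}(-1)^{i}(\partial\vp_{j}/\partial x_{i})\,d_{i}$ is, up to sign, the Laplace expansion of the determinant of the $(n+1)\times(n+1)$ matrix obtained by repeating the $j$-th row of $\Jac(\vp)$, which vanishes. Hence
\[
0\to\calox\xrightarrow{\eta}\calox^{n+1}\xrightarrow{\Jac(\vp)}\calox^{n}\to T^{1}_{X/S}\to 0
\]
is a complex, namely the first Buchsbaum--Rim complex of $\Jac(\vp)$. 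Because $\codim I$ attains the expected value $2$, this complex is acyclic; I would cite the Buchsbaum--Rim acyclicity theorem (or the Buchsbaum--Eisenbud criterion), noting that equivalently it is $\Hom_{\calox}(-,\calox)$ applied to the Hilbert--Burch resolution of the perfect module $\calox/I$, so that $T^{1}_{X/S}\cong\Ext^{2}_{\calox}(\calox/I,\calox)$. Establishing this acyclicity is the only real obstacle, but it is the classical content of those theorems once the codimension is known.

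Finally I would conclude. From acyclicity $\eta$ is injective (clear anyway, as $\calox$ is a domain and $\eta\neq0$), so $T^{0}_{X/S}=\Ker\Jac(\vp)=\calox\cdot\eta\cong\calox$ is free of rank $1$ on the stated generator, giving the second assertion. The displayed resolution has length $2$, so $\pdim T^{1}_{X/S}=2$ and Auslander--Buchsbaum yields $\depth T^{1}_{X/S}=(n+1)-2=n-1$. Since $\supp T^{1}_{X/S}=V(\Fitt_{0}T^{1}_{X/S})=V(I)=C(\vp)$ has dimension $n-1$, we get $\depth=\dim$, i.e.\ $T^{1}_{X/S}$ is Cohen--Macaulay of codimension $2$, as claimed.
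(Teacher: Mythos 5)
Your proof is correct and follows essentially the same route as the paper: both identify $T^1_{X/S}$ with $\coker \Jac(\vp)$ via the truncated Zariski--Jacobi sequence and rest on exactness of the Buchsbaum--Rim complex of $\Jac(\vp)$ under the codimension-$2$ hypothesis, from which the freeness of $T^0_{X/S}=\calox\cdot\eta$ and the Cohen--Macaulay conclusion follow. The only difference is one of packaging: the paper delegates the acyclicity and Cohen--Macaulayness to \cite[Proposition 6.12]{Looijenga}, whereas you reprove them directly from the Buchsbaum--Rim/Hilbert--Burch acyclicity criterion together with Auslander--Buchsbaum and the Fitting-ideal description of the support of $T^1_{X/S}$.
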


\begin{proof}
\cite[Proposition 6.12]{Looijenga}
uses the Buchsbaum--Rim complex to prove
that for $g:\CC^p\to\CC^r$, $p\geq r$, if $C(g)$ has
the expected
dimension $r-1$,
then $\coker(\Jac(g))$ is a Cohen--Macaulay $\calo_{\CC^p}$--module of
dimension $r-1$. 

Thus, in the case at hand,
$T^1_{X/S}\cong \coker(\Jac(\vp))$ is Cohen--Macaulay of
codimension $2$, and the
Buchsbaum--Rim complex for $\bigwedge^1 \Jac(\varphi)=\Jac(\varphi)$ is 
exact and of the form
\begin{equation}
\label{xy:cnnmo}
\xymatrix{
0\ar[r]%
& \calox \ar[r]^-{\cdot\epsilon\eta}
& \left(\calox\right)^{n+1} \ar[r]^-{\Jac(\varphi)}
& \left(\calox\right)^{n} \ar[r]
& T^1_{X/S} \ar[r] & 0
},
\end{equation}
where $\epsilon=(-1)^{\binom{n+2}{2}}$.
Hence
$T^0_{X/S}$ is the free module generated by $\eta$.
\end{proof}

\begin{example}
Let $\varphi:\CC^3\to\CC^2$ be defined by
$\varphi(x_1,x_2,x_3)=(x_1^2+x_2^3,x_2^2+x_1x_3)$.
The critical locus $V(x_1,x_2x_3)$ has codimension 2,
and the discriminant is 
the plane curve defined by $\Delta=s_1^2-s_2^3$.
A Macaulay2 \cite{M2} computation shows that the 
liftable vector fields are exactly
$\Der_S(-\log \Delta)$.
By 
Proposition \ref{prop:nnmo}
and Theorem \ref{mthm}, 
we conclude that $\varphi^{-1}(\Delta)$ is a free divisor
defined by
$$\Delta\varphi=
x_1(-3x_2^4x_3-3x_1x_2^2x_3^2-x_1^2x_3^3+2x_1x_2^3+x_1^3).$$
A generating set of $\Der_X(-\log \Delta\vp)$ consists of
lifts of a generating set of $\Der_S(-\log \Delta)$, and
the vertical vector field
$
-3x_1x_2^2\frac{\partial}{\partial x_1}
+2x_1^2\frac{\partial}{\partial x_2}
-(4x_1x_2-3x_2^2x_3)\frac{\partial}{\partial x_3}
$.
\end{example}

\begin{example}
Let $\vp:\CC^4\to \CC^3$ be defined by
$\vp(x_1,x_2,x_3,x_4)=(x_1x_3,x_2^2-x_3^3,x_2x_4)$.
The critical locus $C(\vp)$ has codimension $2$,
and so by Proposition \ref{prop:nnmo} the module
$T^1_{X/S}$ is Cohen--Macaulay of
codimension $2$. 
Although the module of all liftable vector fields is not free,
thus not associated to a free divisor,
each $s_i\frac{\partial}{\partial s_i}$, $i=1,2,3$, is liftable.
Hence, any free divisor in $\CC^3$ containing the normal crossings
divisor $s_1s_2s_3=0$ will lift via $\vp$ to a free divisor in
$\CC^4$.
\end{example}

\section{Coregular and Cofree Group Actions}
\label{sec:groupactions}

For a reductive linear algebraic group $G$ acting on $X$, we now
consider the algebraic quotient
$\vp:X\to S=X\slashslash G$.

\subsection*{Castling}
Our initial example is related to 
the classical castling of prehomogeneous vector spaces.
\begin{sit}
Let $G=\SL(n,\CC)$ act on the affine space $V=M_{n,n+1}$ of $n\times (n+1)$ matrices over $\CC$ by left
multiplication.
Use coordinates $\{x_{ij}:1\leq i\leq n, 1\leq j\leq n+1\}$ for $V$,
and let $\Delta_i$ be $(-1)^i$ times the
$n\times n$ minor obtained by deleting the $i$th column of the generic matrix 
$(x_{ij})$.
The quotient space $V\slashslash G$ is then again smooth and the corresponding invariant ring
$R=\CC[V]^{G}$ is the polynomial ring on the $n\times n$ minors 
$\{\Delta_{i}\}_{i=1,...,n+1}$ (e.g., \cite[\S9.3,9.4]{PV}).
In particular, $\dim R=n+1$, and the quotient map
$\vp:V\to V\slashslash G$ is smooth outside the null cone $\vp^{-1}(0)$ that in turn is the determinantal variety defined
by the vanishing of the maximal minors of the generic matrix, thus, Cohen--Macaulay of codimension $2$.
\end{sit}
\begin{theorem}
\label{thm:maxmin}
Let $f\in \calos$ define a free divisor in $S=\CC^{n+1}$ that is not suspended, equivalently \cite{GS}, 
$\Der_{S}(-\log f)\subseteq \fm_{S}T^{0}_{S}$. Then $f(\Delta_{1},...,\Delta_{n+1})$ defines a free divisor on 
$\CC^{n(n+1)}$.
\end{theorem}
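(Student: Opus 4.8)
The plan is to apply Theorem \ref{mthm} to the quotient morphism $\vp=(\Delta_1,\dots,\Delta_{n+1})\colon X=\CC^{n(n+1)}\to S=\CC^{n+1}$, a map of smooth germs, so that it remains only to verify conditions \eqref{mthm.ks} and \eqref{mthm.t1xs}. Because $X$ and $S$ are smooth, the dual Zariski--Jacobi sequence gives $T^1_{X/S}\cong\coker\Jac(\vp)$ and $T^0_{X/S}\cong\ker\Jac(\vp)$, so both conditions are encoded in the single matrix of partials $\Jac(\vp)=\big(\partial\Delta_j/\partial x_{kl}\big)$.

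For \eqref{mthm.t1xs} I would first prove that the vertical fields $T^0_{X/S}=\ker\Jac(\vp)$ form a \emph{free} module. The infinitesimal left $\SL(n,\CC)$--action assigns to $\xi\in\sln$ the linear field $V_\xi\colon M\mapsto \xi M$; each $V_\xi$ is vertical since the minors $\Delta_j$ are invariant. As $\SL(n,\CC)$ acts with trivial stabiliser on full--rank matrices, the $V_\xi$ are generically linearly independent and span a free submodule $F\subseteq T^0_{X/S}$ of rank $\dim\sln=n^2-1$. Now $F$ is reflexive (being free) and $T^0_{X/S}$ is reflexive (being a second syzygy over the regular local ring $\calox$), and they coincide off the critical locus; since the critical locus is the null cone $\vp^{-1}(0)=\{\rank M<n\}$, of codimension $2$ as recalled above, the two reflexive modules agree in codimension one and hence are equal. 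Freeness of $T^0_{X/S}$ turns the Zariski--Jacobi sequence into a length--two free resolution
\[
0\to T^0_{X/S}\to \calox^{\,n(n+1)}\xrightarrow{\ \Jac(\vp)\ }\calox^{\,n+1}\to T^1_{X/S}\to 0,
\]
so that $\pdim_{\calox}T^1_{X/S}\le 2$; since $T^1_{X/S}$ is supported exactly on the codimension--$2$ null cone, the Auslander--Buchsbaum formula forces $\depth T^1_{X/S}\ge\dim X-2=\dim T^1_{X/S}$, i.e.\ $T^1_{X/S}$ is Cohen--Macaulay of codimension $2$. I expect the reflexivity/saturation step --- upgrading ``the $V_\xi$ span generically'' to ``the $V_\xi$ generate \emph{all} vertical fields'' --- to be the main obstacle.

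For \eqref{mthm.ks} I would exploit the right $\GL(n+1,\CC)$--action $M\mapsto MB$ on $M_{n,n+1}$. It commutes with the left $\SL(n,\CC)$--action, hence descends along $\vp$, and on the minors it acts through $\wedge^n$ of the standard representation, i.e.\ through a faithful $(n+1)$--dimensional representation of $\GL(n+1,\CC)$ on $S=\CC^{n+1}$ whose differential is all of $\gl_{n+1}$. Thus, for each constant $Q\in\gl_{n+1}$ the equivariant field $M\mapsto MQ$ on $X$ lifts the corresponding linear field on $S$, and as $Q$ varies these exhaust every linear vector field $s_k\,\partial/\partial s_j$. Since the $s_k\,\partial/\partial s_j$ generate $\fm_{S}T^0_S$ over $\calos$, we obtain $\fm_{S}T^0_S\subseteq L:=\ker(\delta_{KS})$. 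By the standing hypothesis that $f$ is not suspended, $\Der_S(-\log f)\subseteq\fm_{S}T^0_S\subseteq L$, which is precisely the vanishing $\delta_{KS}\circ\sigma_f=0$ required in \eqref{mthm.ks}. With both conditions verified, Theorem \ref{mthm} then shows that $f(\Delta_1,\dots,\Delta_{n+1})=f\circ\vp$ is reduced and defines a free divisor on $\CC^{n(n+1)}$.
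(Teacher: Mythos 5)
Your proof is correct and takes essentially the same route as the paper's: you verify condition \eqref{mthm.t1xs} exactly as the paper does, by identifying $T^0_{X/S}$ with $\sln\otimes\calox$ through the comparison of reflexive modules that agree off the codimension-two null cone and then reading off Cohen--Macaulayness of $T^1_{X/S}$ from the resulting length-two free resolution, and you verify condition \eqref{mthm.ks} by the paper's own alternative argument (Lemma \ref{lemma:grouplift} and Example \ref{sit:explainslvfs}): the right $\GL(n+1,\CC)$-action, which acts on the minors through $\wedge^{n}$ of the standard representation, lifts a generating set of $\fm_{S}T^{0}_{S}$, while the paper's displayed computation \eqref{eqn:castlinglifts} is just the explicit form of these lifts. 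The only inaccuracy is the word ``faithful''---$\wedge^{n}$ has the scalar $n$-th roots of unity in its kernel---but this is immaterial, since all your argument uses is that its differential surjects onto $\gl_{n+1}$, which is true.
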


\begin{proof}
Let $X=V\cong \CC^{n(n+1)}, S=V\slashslash G\cong \CC^{n+1}$ and
let $\vp:X\to S$ be the natural morphism,
smooth off the codimension $2$ null cone $\vp^{-1}(0)$.

That the Kodaira--Spencer map restricted to the logarithmic vector fields
along $f$ vanishes is due to our assumption that $\Der_{S}(-\log f)\subseteq \fm_{S}T^{0}_{S}$ and the fact that 
we can exhibit lifts of a generating set of $\fm_{S}T^{0}_{S}$.  Indeed, a computation 
shows that for $1\leq p,q\leq n+1$ with $p\neq q$ and any $1\leq r\leq n$,
\begin{equation}
\begin{split}
\label{eqn:castlinglifts}
\Jac(\varphi)\left(
 -\sum_{i=1}^n x_{iq} \frac{\partial}{\partial x_{ip}}
\right)
&=\Delta_p \frac{\partial}{\partial s_q}
=T^0_S(\varphi^{\flat})\left(s_p \frac{\partial}{\partial s_q}\right)
\\
\Jac(\varphi)\left(
 \sum_{j=1}^{n+1} x_{rj} \frac{\partial}{\partial x_{rj}}
 -
 \sum_{i=1}^n x_{iq} \frac{\partial}{\partial x_{iq}}
\right)
&=\Delta_q \frac{\partial}{\partial s_q}
=T^0_S(\varphi^{\flat})\left(s_q\frac{\partial}{\partial s_q}\right).
\end{split}
\end{equation}
(In each case, $\Jac(\varphi)$ applied to the sum over $i$ gives a sum
where the coefficient of $\frac{\partial}{\partial s_k}$ is of the form
$\sum_{i=1}^n x_{iq} \frac{\partial \Delta_k}{\partial x_{ip}}$,
which simplifies
to $\pm \Delta_p$, $\pm \Delta_k$, or $0$,
depending on $p,q,k$.
Applying $\Jac(\vp)$ to the sum over $j$ gives $\sum_{k=1}^{n+1}
\Delta_k \frac{\partial}{\partial s_k}$, as each minor is linear in
row $r$.
Or, see \ref{sit:explainslvfs}.) 
%
This shows that 
condition \eqref{mthm.ks} of Theorem \ref{mthm} is satisfied.

It suffices to establish condition \eqref{mthm.t1xs}. This will follow
from the dual Zariski--Jacobi sequence, once we show that the $\calox$--module $T^{0}_{X/S}$ 
of vertical vector fields along the map $\vp$ is free. However, the Lie algebra $\sln$ acts through
derivations on $\calox$, defining a $\calox$--linear map $\sln\otimes \calox\to T^{0}_{X/S}$.
This map is an isomorphism outside the null cone, as the smooth fibres there are regular orbits for the 
$\SL(n,\CC)$--action. Now both source and target of the exhibited map are reflexive $\calox$--modules
and the map is an isomorphism outside the null cone of codimension $2$, whence it must be an isomorphism 
everywhere.
\end{proof}

\begin{remark}
Two types of vector fields on $M_{n,n+1}$ generate
$\Der(-\log f\vp)$.
The first are lifts of a generating set of $\Der(-\log f)$,
which may be found using \eqref{eqn:castlinglifts}.
The second are the linear vector fields
arising from the $\SL(n,\CC)$ action on $M_{n,n+1}$; these generate
the module
$T^0_{X/S}$ of vertical vector fields.
Note that this is a minimal generating set, and that
if the generators of $\Der(-\log f)$ are linear vector fields then
$\Der(-\log f\vp)$ is also generated by linear vector fields.
\end{remark}

\begin{example}
The normal crossings divisor in $S=\CC^{n+1}$ is the linear free divisor
defined by $f=s_1\cdots s_{n+1}=0$.
By Theorem \ref{thm:maxmin}, this pulls back to the linear free divisor
$f\varphi=\Delta_1\cdots \Delta_{n+1}=0$,
previously seen in \cite[7.4]{buchweitz-mond}.
A generating set of $\Der(-\log f\varphi)$ consists of the $n^2-1$ vector
fields arising from the $\SL(n,\CC)$ action on $M_{n,n+1}$, and 
lifts (as in \eqref{eqn:castlinglifts}) of the $n+1$ generators
$\left\{s_i\frac{\partial}{\partial s_i}\right\}_{i=1}^{n+1}$ of
$\Der_S(-\log f)$.
\end{example}

%
\begin{example}
Let $f=0$ be a reduced defining equation of a free surface in $\CC^3$
which is not suspended.
Such free surfaces exist in abundance, see, for example,
\cite{Damon-equisingular,Sekiguchi}.
Pulling back $f$ via $\varphi:M_{2,3}\to M_{2,3}\slashslash
\SL(2,\CC)\cong \CC^3$ produces the
free divisor
$$
f\left(
 -(x_{12}x_{23}-x_{13}x_{22}),
 (x_{11}x_{23}-x_{13}x_{21}),
 -(x_{11}x_{22}-x_{12}x_{21}) \right)=0$$
in $M_{2,3}$.
For instance, $f=s_1(s_1s_3-s_2^2)$ pulls back to the linear free divisor
\begin{multline*}
(x_{12}x_{23}-x_{13}x_{22})\cdot(-x_{12}x_{23}x_{11}x_{22}+x_{12}^2x_{23}x_{21}+x_{13}x_{22}^2x_{11}-x_{13}x_{22}x_{12}x_{21}
\\+x_{11}^2x_{23}^2-2x_{11}x_{23}x_{13}x_{21}+x_{13}^2x_{21}^2)=0.
\end{multline*}
\end{example}

\begin{sit}
The classical castling construction relates a representation $\rho$ of a group $G$ on $M_{n,m}$, $m<n$, to a representation $\rho'$ of some
$G'$ on $M_{n,n-m}$, and vice versa.
Then $\rho$ has a Zariski open orbit if and only if $\rho'$ has a
Zariski open orbit, and the
hypersurface component of the complement of each is defined by a homogeneous
polynomial ($H$, respectively, $H'$) in the respective generic
maximal minors
(\cite[\S2.3]{granger-mond-schulze}).
There is a bijection between the maximal minors of $M_{n,m}$ and
$M_{n,n-m}$ defined by replacing a $m\times m$ minor $\Delta_I$ on
$M_{n,m}$ with the $(n-m)\times (n-m)$ minor $\Delta'_I$ on
$M_{n,n-m}$ formed by using the complementary set of rows and an
appropriate sign.
As polynomials in the minors, via this correspondence $H$ and $H'$ are
the same up to multiplication by a unit.

Castling sends linear free divisors to linear free divisors
by \cite[Prop.\ 2.10(4)]{granger-mond-schulze}.
For arbitrary free divisors, our Theorem \ref{thm:maxmin} addresses the
$n=m+1$ situation (in one direction), and it is reasonable to ask whether it holds more generally
for arbitrary $(n,m)$.
One difficulty is that there is generally no morphism
between $M_{n,m}$ and $M_{n,n-m}$
which sends $\Delta_I$ to $\Delta'_I$, or vice-versa, and hence
it is unclear how to lift vector fields, or even what this means.
In the classical situation, an underlying representation $\theta$ of a group $H$
on a $n$-dimensional space is used in the construction of both $\rho$ and $\rho'$,
and so gives a correspondence between the vector fields generated by
the action of $\theta$ on
the two spaces.
\end{sit}

The general situation remains mysterious:

\begin{example}
For $(n,m)=(5,2)$, let $\Delta_{ij}$ denote the minor on $M_{5,2}$
obtained by using only rows $i$ and $j$.
A calculation using the software Macaulay2 or Singular shows that
$\Delta_{14}\Delta_{15}(\Delta_{14}\Delta_{25}-\Delta_{15}\Delta_{24})
(\Delta_{34}\Delta_{45}-\Delta_{35}^2)=0$
defines a (non-linear) free divisor on $M_{5,2}$.
Another computation shows that the corresponding divisor on $M_{5,3}$ is not free.
It is unclear what additional hypotheses are necessary to generalize
Theorem \ref{thm:maxmin}.
%
\end{example}

\subsection*{Group actions}
We now generalize the ideas behind Theorem \ref{thm:maxmin} to
the case when $\vp:X\to S$ is given by the quotient of $X$ under a group action.
We work now in the algebraic category of schemes of finite type over
$\CC$.
Recall the following definitions.

\begin{defn}
If $G$ is any reductive complex algebraic group, then a finite dimensional linear representation $V$ is 
\begin{enumerate}[\rm(a)]
\item {\em coregular\/} if the quotient space $V\slashslash G$ is {\em smooth\/};
\item {\em cofree}, if further the natural 
projection $\vp:V\to V\slashslash G$ is {\em flat}, equivalently (see
\cite[\S 8.1]{PV}),  
$\vp:V\to V\slashslash G$ is coregular and {\em equidimensional\/} in that all fibres have the same dimension;
\item {\em coreduced\/}, if the {\em null cone\/} $\vp^{-1}(0)$ is reduced.
\end{enumerate}
\end{defn}

In algebraic terms, with $\CC[V]$ the ring of polynomial functions, coregularity means that 
the ring of invariants $R=\CC[V]^{G}$ is again a polynomial ring, while cofreeness means that further 
$\CC[V]$ is free as an $R$--module (e.g., \cite[\S8.1]{PV}\footnote{%
The reference there for the algebraic result needed to justify this
interpretation of cofreeness is incorrect,
and should be Bourbaki's \emph{Groupes et Alg\`ebres de Lie}, Chap.~V,
\S5, Lemma 1.
}).
If $R=\CC[f_{1},..., f_{d}]$ is the polynomial ring 
over the indicated invariant functions $f_{j}\in \CC[V]$, then in the cofree case these functions form a 
regular sequence in $\CC[V]$.

\begin{remark}
A famous conjecture by Popov suggests that equidimensionality of (the fibres of) the projection 
$\vp:V\to V\slashslash G$ already implies coregularity and then
automatically cofreeness for $G$ connected semi-simple.
\end{remark}

There are many examples of cofree representations, and even more that are coregular.
We just mention Kempf's basic result that a representation is automatically cofree whenever 
$\dim V\slashslash G \leqslant 2$; see \cite[Thm.\ 8.6]{PV} or \cite{Kempf}. For further lists of such representations 
see \cite{Schwarz,Littelmann,Wehlau}.

\begin{remark}
In the case of Theorem \ref{thm:maxmin} above, the action of
$\SL(n,\CC)$ on $M_{n,n+1}$ is 
coregular, but not cofree.
\end{remark}

\begin{sit}
To apply our main theorems to the quotient
$X\to S$
of a coregular representation, $T^0_{X/S}$
must be free.
There is a straightforward sufficient criterion for
the stronger condition
that $T^1_{X/S}$ is Cohen--Macaulay of codimension $2$.
\end{sit}

\begin{proposition}
\label{prop:coregulart1cm}
Let $X=V$ be a coregular representation of the
reductive complex algebraic
group $G$ with Lie algebra $\fg$ and quotient $S=V\slashslash G$.
If the generic stabilizer of $G$ on $X$ is of dimension $0$ and
the natural morphism
$\vp:X\to S$ is smooth outside a set of codimension $2$ in $X$, then
\begin{enumerate}[\rm(i)]
\item
\label{conc:altgoxiso}
The natural
$\calox$--homomorphism $\fg\otimes\calox\to T^{0}_{X/S}$ is an
isomorphism; and
\item
\label{conc:altiscmcodim2}
$T^1_{X/S}$ is a Cohen--Macaulay $\calox$--module of codimension $2$.
\end{enumerate}
\end{proposition}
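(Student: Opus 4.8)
The plan is to mirror the proof of Theorem \ref{thm:maxmin}, establishing \eqref{conc:altgoxiso} by a reflexivity argument and then deducing \eqref{conc:altiscmcodim2} formally from it via the dual Zariski--Jacobi sequence of \ref{sit:ZJ}. First I would set up the map of \eqref{conc:altgoxiso}: the Lie algebra $\fg$ acts on $\calox$ through its fundamental vector fields, and since the components of $\vp$ generate the ring of $G$-invariants, each such derivation annihilates $\vp^{\flat}(\calos)$ and is thus vertical. This produces the natural $\calox$-linear map $\fg\otimes\calox\to T^0_{X/S}$. To prepare the reflexivity argument, I note that $\fg\otimes\calox$ is free, while $T^0_{X/S}=\ker(\Jac(\vp)\colon T^0_X\to T^0_S(\calox))$ is, by \ref{sit:ZJ} in the smooth--smooth case, a second syzygy module over the regular ring $\calox$, hence reflexive.

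The heart of the argument --- and the step I expect to be the main obstacle --- is to show that this map is an isomorphism on the open locus $U\subseteq X$ where $\vp$ is smooth. Over $U$ the module $T^0_{X/S}$ is locally free of rank equal to the relative dimension $\dim X-\dim S$, which agrees with the rank of $\fg\otimes\calo_U$. Because the generic stabilizer is $0$-dimensional, the fibres of $\vp$ over $U$ are regular orbits, so the fundamental vector fields span the relative tangent sheaf there and remain linearly independent; this identifies the map as an isomorphism over $U$. This is precisely the geometric input ``the smooth fibres are regular orbits'' used in Theorem \ref{thm:maxmin}, and it is where the two standing hypotheses --- generic stabilizer of dimension $0$ and smoothness of $\vp$ off codimension $2$ --- must be combined to rule out a codimension-one locus of positive-dimensional stabilizers.

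With the map an isomorphism over $U$ and its complement $X\setminus U=C(\vp)$ of codimension $\geq 2$ by hypothesis, the final move of \eqref{conc:altgoxiso} is the reflexivity principle already invoked in Theorem \ref{thm:maxmin}: a homomorphism of reflexive $\calox$-modules that is an isomorphism away from a set of codimension $\geq 2$ is an isomorphism everywhere, since each side equals the intersection of its localizations at the height-one primes. Hence $\fg\otimes\calox\xrightarrow{\ \sim\ }T^0_{X/S}$, which proves \eqref{conc:altgoxiso} and shows in particular that $T^0_{X/S}$ is free, necessarily of rank $\dim X-\dim S$.

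For \eqref{conc:altiscmcodim2} I would then feed this freeness into the dual Zariski--Jacobi sequence. As $X$ is smooth, $T^1_X=0$, so \ref{sit:ZJ} truncates to the exact sequence $0\to T^0_{X/S}\to T^0_X\xrightarrow{\Jac(\vp)}T^0_S(\calox)\to T^1_{X/S}\to 0$, i.e.\ a free resolution $0\to\calox^{\,n-m}\to\calox^{\,n}\to\calox^{\,m}\to T^1_{X/S}\to 0$ with $n=\dim X$ and $m=\dim S$. Thus $\projdim_{\calox}T^1_{X/S}\leq 2$, and the Auslander--Buchsbaum formula gives $\depth T^1_{X/S}\geq n-2$. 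On the other hand $T^1_{X/S}$ is supported on $C(\vp)$, which has codimension $\geq 2$, so $\dim T^1_{X/S}\leq n-2$. Combining, $n-2\leq\depth T^1_{X/S}\leq\dim T^1_{X/S}\leq n-2$, whence $T^1_{X/S}$ is Cohen--Macaulay of codimension $2$. In this last paragraph everything is formal once \eqref{conc:altgoxiso} is known; the genuine content lies entirely in the orbit-geometric step of the second paragraph.
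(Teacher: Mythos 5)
Your proposal is essentially the paper's own proof: the same map $\fg\otimes\calox\to T^{0}_{X/S}$, injective because the generic stabilizer is finite, shown to be an isomorphism of reflexive modules off the codimension-two critical locus (resting on the same geometric assertion, also left unproved in the paper, that the fibres over the smooth locus are regular orbits), followed by the identical Zariski--Jacobi/Auslander--Buchsbaum/support-dimension argument for the Cohen--Macaulay statement. No substantive difference in route or in the key lemmas used.
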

\begin{proof}
$\vp$ is smooth outside of a set of codimension $2$ in $X$
and $T^1_{X/S}$ is supported on the critical locus of $\vp$,
so $\codim(\supp T^1_{X/S})\geq 2$,
or $\dim(T^1_{X/S})\leq \dim(X)-2$.

Since the generic stabilizer of 
$G$ on $X$ is of dimension zero, thus, a finite group,
the $\calox$--homomorphism 
$\rho:\fg\otimes \calox\to T^{0}_{X/S}$ is an inclusion.
On the set in $X$ where $\varphi$ is smooth,
$\rho$ is also locally surjective.
Since $T^0_{X/S}$ is a second syzygy module by the
dual Zariski--Jacobi sequence,
it is reflexive (e.g., \cite[Prop.\ 1.1]{hartshorne-stablereflexive}).
%
As $\fg\otimes \calox$ is free,
$\rho$ is a homomorphism between 
reflexive modules which is an isomorphism off a set of
codimension $\geq 2$, and hence $\rho$ is an isomorphism.
This proves \eqref{conc:altgoxiso}.

By \eqref{conc:altgoxiso} and the dual Zariski--Jacobi sequence,
$\projdim_{\calox} T^1_{X/S}\leq 2$.
By the Auslander--Buchsbaum formula and the usual
relation between depth and dimension,
$$
\dim(X)-2\leq \depth(T^1_{X/S})\leq \dim(T^1_{X/S}).$$
As 
already $\dim(T^1_{X/S})\leq \dim(X)-2$,
$T^1_{X/S}$ is Cohen--Macaulay of codimension $2$.
\end{proof}

There are coregular representations for
which $T^0_{X/S}$ of the quotient is free, but
$T^1_{X/S}$ is not Cohen--Macaulay of codimension $2$
(e.g., Example \ref{ex:square}).
Our next result gives some insight into these cases, and also
gives a necessary numerical condition for Proposition
\ref{prop:coregulart1cm} to apply;
we have used it to choose our examples.

\begin{proposition}
\label{prop:numericalcriterion}
Let $X=V$ be a coregular representation of the
reductive complex algebraic
group $G$ with Lie algebra $\fg$ and quotient $S=V\slashslash G$.
Let $N=\dim(X)$, $d=\dim(S)$, and let
$\delta_1,\ldots,\delta_d\geqslant 1$
be the degrees of the generating invariants.
If the natural 
$\calox$--homomorphism $\fg\otimes\calox\to T^{0}_{X/S}$ is an
isomorphism,
then either
\begin{enumerate}
\item
$N=\sum_{\nu=1}^d \delta_\nu$ and $\dim(T^1_{X/S})=N-2$; or
\item
$N\neq \sum_{\nu=1}^d \delta_\nu$ and $\dim(T^1_{X/S})=N-1$.
\end{enumerate}
\end{proposition}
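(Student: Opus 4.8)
The plan is to turn the dual Zariski--Jacobi sequence into a graded free resolution of $T^1_{X/S}$ and then read off $\dim T^1_{X/S}$ from the order of the pole of its Hilbert series at $t=1$. Since $X\cong\CC^N$ and $S\cong\CC^d$ are both smooth, the dual Zariski--Jacobi sequence of \ref{sit:ZJ} truncates, using $T^1_X=0$, to the four-term exact sequence
\[
0\to T^0_{X/S}\to T^0_X\xrightarrow{\Jac(\vp)} T^0_S(\calox)\to T^1_{X/S}\to 0 .
\]
Grade $\calox$ by $\deg x_j=1$, so each $\partial/\partial x_j$ has degree $-1$, and give $s_\nu$ the degree $\delta_\nu$ of the corresponding invariant, so each $\partial/\partial s_\nu$ has degree $-\delta_\nu$; with these conventions $\Jac(\vp)$ is homogeneous of degree $0$. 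Thus $T^0_X$ is free with its $N$ generators in degree $-1$, and $T^0_S(\calox)$ is free with generators in degrees $-\delta_1,\dots,-\delta_d$. The fundamental vector fields spanning $\fg$ are linear, hence homogeneous of degree $0$, so the assumed isomorphism $\fg\otimes\calox\cong T^0_{X/S}$ is graded and presents $T^0_{X/S}$ as free with all generators in degree $0$. Comparing ranks in the displayed sequence, and using that $\Jac(\vp)$ has generic rank $d$ because $\vp$ is dominant, forces $\dim\fg=N-d$.

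Next I would record the crude bounds. The displayed sequence is a free resolution of $T^1_{X/S}$ of length $2$, so $\projdim_{\calox} T^1_{X/S}\le 2$, and the Auslander--Buchsbaum formula gives $\depth T^1_{X/S}\ge N-2$, whence $\dim T^1_{X/S}\ge N-2$. On the other hand $\vp$ is a dominant morphism of smooth varieties over $\CC$, so by generic smoothness $T^1_{X/S}$ is supported on a proper closed subset and is therefore torsion, giving $\dim T^1_{X/S}\le N-1$. Hence $\dim T^1_{X/S}\in\{N-2,\,N-1\}$, and only the dichotomy remains to be settled.

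The deciding step is a Hilbert--series computation from the graded resolution: the Hilbert series of $T^1_{X/S}$ equals $K(t)/(1-t)^N$ with
\[
K(t)=\sum_{\nu=1}^{d} t^{-\delta_\nu}-N\,t^{-1}+(N-d),
\]
so that $\dim T^1_{X/S}=N-\mathrm{ord}_{t=1}K(t)$, the order of vanishing of $K$ at $t=1$ (a unit factor $t^{m}$ clearing denominators does not affect this order). A direct evaluation gives $K(1)=d-N+(N-d)=0$, consistent with torsion, and $K'(1)=N-\sum_{\nu=1}^{d}\delta_\nu$. Therefore, if $N\neq\sum_\nu\delta_\nu$ then $K$ vanishes to order exactly $1$ at $t=1$ and $\dim T^1_{X/S}=N-1$; if instead $N=\sum_\nu\delta_\nu$ then $K$ vanishes to order at least $2$, so $\dim T^1_{X/S}\le N-2$, which combined with the lower bound above yields $\dim T^1_{X/S}=N-2$. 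This is precisely the asserted dichotomy.

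I expect the only genuine obstacle to be the correct bookkeeping of the graded twists: one must verify that the fundamental vector fields really lie in degree $0$ and that $\partial/\partial s_\nu$ carries degree $-\delta_\nu$, so that $\Jac(\vp)$ is degree-preserving and the numerator $K(t)$ is as stated. Once the grading is pinned down, passing from $K(1)=0$ and $K'(1)=N-\sum_\nu\delta_\nu$ to the order of vanishing, and hence to $\dim T^1_{X/S}$, is routine.
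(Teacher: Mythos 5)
Your proof is correct and follows essentially the same route as the paper: the graded free resolution of $T^1_{X/S}$ coming from the dual Zariski--Jacobi sequence, the Auslander--Buchsbaum lower bound $\dim T^1_{X/S}\ge N-2$, and the Hilbert--series computation showing the pole at $t=1$ has order $N-1$ exactly when $N\neq\sum_\nu\delta_\nu$. The only differences are cosmetic (your degree conventions versus the paper's twists $\calox(1)$, $\calox(\delta_\nu)$, and your redundant torsion bound $\dim\le N-1$, which already follows from $K(1)=0$).
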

\begin{proof}
If $T^0_{X/S}$ is free and generated by the group action, then
the dual Zariski--Jacobi sequence provides
a graded free resolution of the graded $\calox$--module $T^{1}_{X/S}$
of the form
\begin{equation}
\label{eqn:grres}
\xymatrix{
0
\ar[r] &
\calo_{X}^{\oplus (N-d)}
\ar[r] &
\oplus_{\nu=1}^{N}\calox(1)
\ar[r] &
\oplus_{\nu=1}^{d}\calox(\delta_{\nu})
\ar[r] &
T^{1}_{X/S}
\ar[r] &
0\,.
}
\end{equation}
First, \eqref{eqn:grres} implies 
$\projdim_{\calox}(T^1_{X/S})\leq 2$,
and then the Auslander--Buchsbaum formula
shows
$\dim(T^1_{X/S})\geq \dim(X)-2$.
Also by \eqref{eqn:grres},
and using the identity
$t^{-n}-1=(1-t)(t^{-1}+\cdots+t^{-n})$,
the Hilbert--Poincar\'e series of $T^{1}_{X/S}$ satisfies
\begin{align*}
\HH_{T^{1}_{X/S}}(t)
&= \frac{1}{(1-t)^{N}}\left( \sum_{\nu=1}^{d}t^{-\delta_{\nu}}-N t^{-1}+N-d\right)\\
&= \frac{1}{(1-t)^{N}}\left( \sum_{\nu=1}^{d}(t^{-\delta_{\nu}}-1)-N(t^{-1}-1)\right)\\
&= \frac{\left(\left(\sum_{\nu=1}^{d}\delta_{\nu}\right)-N\right) +(1-t)p(t,t^{-1})}{(1-t)^{N-1}}
\end{align*}
for some Laurent polynomial $p(t,t^{-1})\in\ZZ[t,t^{-1}]$.
In particular, $\HH_{T^1_{X/S}}$ has a pole at $t=1$ of order $N-1$ 
if and only if
$N\neq \sum_{\nu=1}^d \delta_\nu$.
Finally, the order of this pole equals
$\dim(T^1_{X/S})$.
\end{proof}

\begin{remark}
If we inspect the table
of cofree irreducible representations of simple groups
in \cite[Summary Table]{PV}, we check readily that
when the generic stabilizer is finite,
the equation $\dim(X)=\sum_{\nu=1}^d \delta_\nu$
is satisfied.
However, the tables 
of cofree irreducible representations of semisimple groups in
\cite{Littelmann}
show that this is not automatic; 
for instance (in the notation there),
the representation $\omega_5+\omega'_1$ of
$B_5+A_1$ has $\dim(X)=64$ and $(\delta_i)=(2,4,6,8,8,12)$, which
falls $64-40=24$ short.
%
\end{remark}

\begin{sit}
To prove that vector fields lift across $\vp:X\to X\slashslash G$,
the following technique is useful.
\end{sit}

\begin{lemma}
\label{lemma:grouplift}
Let $X=V$ be a coregular representation of the algebraic group $G$
with quotient $S=V\slashslash G$.  Let $\rho_X$ and $\rho_S$ be representations
of an algebraic group $H$ on $X$, respectively, $S$.
If $\vp:X\to S$ is equivariant with respect to the action of $H$,
then all vector fields on $S$
obtained by differentiating $\rho_S$ lift across $\vp$.
\end{lemma}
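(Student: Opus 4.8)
The plan is to produce, for each infinitesimal generator of the $H$-action on $S$, an explicit lift to $X$ given by the corresponding generator of the $H$-action on $X$, with $H$-equivariance of $\vp$ supplying exactly the compatibility required. Write $\fh$ for the Lie algebra of $H$. For $\xi\in\fh$, let $\xi_X\in T^0_X$ and $\xi_S\in T^0_S$ be the fundamental vector fields of $\rho_X$ and $\rho_S$, obtained by differentiating $t\mapsto\exp(t\xi)$ at $t=0$; these $\xi_S$ are precisely the vector fields ``obtained by differentiating $\rho_S$'' in the statement. Since the liftable vector fields $\ker(\delta_{KS})$ form an $\calos$--submodule of $T^0_S$, it suffices to show that each such $\xi_S$ lifts.

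The key step is the infinitesimal form of equivariance. Equivariance says $\vp\circ\rho_X(h)=\rho_S(h)\circ\vp$ for all $h\in H$; passing to coordinate rings and differentiating at the identity in the direction $\xi$ yields the identity of $\CC$--linear maps $\calos\to\calox$
\[
\xi_X\circ\vp^\flat=\vp^\flat\circ\xi_S.
\]
In the complex-analytic picture this is just $d\vp_x(\xi_X(x))=\xi_S(\vp(x))$, i.e.\ $\xi_X$ and $\xi_S$ are $\vp$--related, and follows from the chain rule; in the algebraic category one phrases the same computation through the comodule structures on the coordinate rings.

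Next I would translate this into the framework of \ref{sit:ZJ}. Evaluating both sides on a coordinate function $s_j$ on $S$ gives
\[
\xi_X\big(\vp^\flat(s_j)\big)=\vp^\flat\big(\xi_S(s_j)\big),
\]
whose left-hand side is the $\partial/\partial s_j$--coefficient of $\Jac(\vp)(\xi_X)$ and whose right-hand side is that of $T^0_S(\vp^\flat)(\xi_S)$. Hence $\Jac(\vp)(\xi_X)=T^0_S(\vp^\flat)(\xi_S)$ in $T^0_S(\calox)$. By exactness of the dual Zariski--Jacobi sequence, $T^0_S(\vp^\flat)(\xi_S)$ lies in the image of $\Jac(\vp)$, so $\delta_{KS}(\xi_S)=\delta\big(T^0_S(\vp^\flat)(\xi_S)\big)=0$; that is, $\xi_S$ is liftable with lift $\xi_X$. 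As $\xi\in\fh$ was arbitrary, every vector field obtained by differentiating $\rho_S$ lifts.

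The two displayed computations are routine. The one point deserving care is the infinitesimal form of equivariance, and the genuine obstacle is making its derivation rigorous in the algebraic category of schemes of finite type over $\CC$: there one should define the infinitesimal action $\fh\to T^0_{(-)}$ via the coaction on the coordinate ring and check that $H$--equivariance of $\vp^\flat$ (as a morphism of $H$--algebras) differentiates to the displayed commutation with $\vp^\flat$. In the analytic setting this is immediate, so the essential content is simply the naturality of passing from a group action to its infinitesimal generators; note also that $G$ itself plays no role here beyond defining $S=V\slashslash G$.
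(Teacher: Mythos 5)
Your proof is correct and follows essentially the same route as the paper: differentiate the $H$-equivariance of $\vp$ to conclude that $d\rho_X(\xi)$ and $d\rho_S(\xi)$ are $\vp$-related, which is exactly the statement that each $d\rho_S(\xi)$ lifts. The paper's proof is just a terser version of this; your translation into the Kodaira--Spencer formalism ($\Jac(\vp)(\xi_X)=T^0_S(\vp^\flat)(\xi_S)$, hence $\delta_{KS}(\xi_S)=0$) makes explicit what the paper leaves implicit.
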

\begin{proof}
Differentiating gives representations $d\rho_X$ and $d\rho_S$ of
$\fh$, the Lie algebra of $H$, as Lie algebras of vector fields on
$X$, respectively, $S$.
Since $\vp$ is equivariant, for each $Y\in\fh$, $d\rho_X(Y)$ is
$\vp$-related to $d\rho_S(Y)$, and hence $d\rho_S(Y)$ lifts to
$d\rho_X(Y)$.
\end{proof}

\begin{example}
\label{sit:explainslvfs}
This argument may be used in
the castling situation of Theorem \ref{thm:maxmin}.
There, $\GL({n+1},\CC)$ has representations $\rho_X$ and $\rho_S$ on
$X=M_{n,n+1}$ and $S=M_{1,n+1}$ defined by
$$\rho_X(A)(B)=BA^T
\text{ and }
\rho_S(A)(C)=C\adj(A)=C\det(A)A^{-1},
$$
where $\adj(A)$ is the adjugate of $A$.
(If $M_{n,n+1}\simeq V\otimes W$, with $\dim(V)=n$, $\dim(W)=n+1$,
and $M_{1,n+1}\simeq \CC\otimes W^*$, then
$\rho_X$ is a representation on $W$ and $\rho_S$ is the contragredient
representation of $\rho_X$.)
A calculation shows that
$\vp$ is equivariant with respect to $\rho_X$ and $\rho_S$.
Since $d\rho_S$ produces 
a generating set of $\Der_S(-\log \{0\})$,
any $\eta\in\Der_S(-\log \{0\})$ will lift.
(Note that the lifts in \eqref{eqn:castlinglifts} have been
simplified.)
%
%
\end{example}

\begin{sit}
We now investigate a method for determining the generating invariants
of lowest degree.
First we observe that if $\Jac(\vp)$ was known, then
it would be easy to determine a generating set of invariants.
\end{sit}


\begin{proposition}
Let $X=V$ be a coregular representation of $G$, and let
$\vp:X\to S=V\slashslash G$ with $N=\dim(X)$ and $d=\dim(S)$.
If $E=\sum_{i=1}^N x_i \frac{\partial}{\partial x_i}\in T^0_X$ is the Euler
vector field and we write
$$\Jac(\vp)(E)=\sum_{j=1}^d \tilde{f}_j \frac{\partial}{\partial s_j}\in T^0_S(\calox),$$
then the coefficient functions $\tilde{f}_{j}$ form a generating set of the invariants in 
$\CC[V]^{G}\subseteq \CC[V]$.
\end{proposition}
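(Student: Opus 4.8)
The plan is to compute the coefficient functions $\tilde f_j$ explicitly and then recognize them, via Euler's relation, as nonzero scalar multiples of the generating invariants. First I would apply the formula for $\Jac(\vp)$ recorded in \ref{sit:ZJ}. Writing $f_j=s_j\circ\vp=\vp^\flat(s_j)$ for the $j$-th component of $\vp$ and substituting $Z=E=\sum_{i=1}^N x_i\frac{\partial}{\partial x_i}$ into \eqref{al:vfformt0sox} (so that the coefficient $g_i$ equals $x_i$), I obtain
\begin{align*}
\Jac(\vp)(E)=\sum_{j=1}^d\left(\sum_{i=1}^N x_i\frac{\partial f_j}{\partial x_i}\right)\frac{\partial}{\partial s_j},
\end{align*}
and hence $\tilde f_j=E(f_j)=\sum_{i=1}^N x_i\,\partial f_j/\partial x_i$.

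Next I would invoke coregularity: the functions $f_1,\dots,f_d$ are a choice of \emph{homogeneous} polynomial generators of the graded ring $\CC[V]^G$, where the grading is the one for which the Euler field $E$ is the degree operator, i.e.\ the grading induced by the scaling action on the linear space $V$. Each $f_j$ is then homogeneous of some positive degree $\delta_j\geqslant 1$ (these being exactly the degrees $\delta_\nu$ named in the preceding proposition). Euler's identity for homogeneous polynomials gives $E(f_j)=\delta_j f_j$, so $\tilde f_j=\delta_j f_j$. Since every $\delta_j$ is a positive integer, $\tilde f_j$ is a nonzero scalar multiple of $f_j$; in particular $\tilde f_j\in\CC[V]^G$ and $\CC[\tilde f_1,\dots,\tilde f_d]=\CC[f_1,\dots,f_d]=\CC[V]^G$, which is the assertion.

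The only point that is not purely formal is the homogeneity of the chosen generators, and this is where the hypotheses must be used rather than waved through. If the coordinates $s_j$ on $S$ were permitted to be arbitrary (non-weighted-homogeneous) algebra generators of $\CC[V]^G$, then $E(f_j)$ would be $\sum_k k\,f_{j,k}$ summed over the homogeneous components of $f_j$, and in the polynomial category this need not generate $\CC[V]^G$. So the crux of the argument is simply to fix at the outset that the generating invariants are taken homogeneous with respect to the grading for which $E$ is the Euler operator; once that is in place, Euler's relation does all the work and the conclusion is immediate.
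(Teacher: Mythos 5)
Your proposal is correct and follows essentially the same route as the paper's own proof: compute $\Jac(\vp)(E)$ via the coordinate formula, apply Euler's relation to the homogeneous generators $f_j$ to get $\tilde f_j=\deg(f_j)\,f_j$, and conclude using $\deg(f_j)>0$ in characteristic zero. Your explicit remark that the generators must be chosen homogeneous (which the paper assumes tacitly, as is standard for a linear $G$-action) is a reasonable clarification but does not change the argument.
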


\begin{proof}
Observe that $\vp^{\flat}:\calos=\CC[s_{1},\ldots,s_{d}]\to \CC[f_{1},\ldots,f_{d}]=\CC[V]^{G}\subseteq \CC[V]$ is the 
canonical inclusion, that is, $\vp^{\flat}(s_{j})=f_{j}$.  Since each $f_j$ is homogeneous, we have
\begin{align}
\label{eqn:aneulervflift}
\Jac(\vp)(E)
&=
\sum_{j=1}^d \left(\sum_{i=1}^N x_i \frac{\partial (s_j\circ \vp)}{\partial x_i} \right)\frac{\partial}{\partial s_j}
=
\sum_{j=1}^d \deg(f_j) f_j\frac{\partial}{\partial s_j}.
\end{align}
Now each $\deg(f_{j})>0$, and we are in characteristic zero, so that the functions 
$\tilde{f}_{j}= \deg(f_{j})f_{j}$ also form a generating set of invariants.
\end{proof}

\begin{remark}
In the proof, \eqref{eqn:aneulervflift} shows that
$\sum_{j=1}^d \deg(f_j)s_j \frac{\partial}{\partial s_j}\in T^0_S$
lifts across $\vp$, although we do not use this fact.
\end{remark}

\begin{sit}
We now describe a way to compute the $\calox$--ideal generated by the
invariants, even without knowledge of the invariants.
 From this ideal we may recover the 
invariants of lowest degree. 
\end{sit}

\begin{proposition}
Let $X=V$ be a coregular representation of $G$ with finite generic stabilizer,
and let $\vp:X\to S=X\slashslash G$, with $N=\dim(X)$.
Let $f_1,\ldots,f_d$ be generating invariants, and
let $J=(f_1,\ldots,f_d)\calox$.
Let $K\subseteq (\calox)^N$ be the $\calox$--module 
of $b=(b_i)$ such that $\sum_{i=1}^N b_i a_i=0$ for any linear vector field
$\sum_{i=1}^N a_i \frac{\partial}{\partial x_i}$ on $X$ arising from the
action of the Lie algebra $\fg$ of $G$.
Let $I$ be the $\calox$--ideal consisting of $\sum_{i=1}^N b_i
x_i$, where $(b_i)\in K$.
If $T^1_{X/S}$ is a Cohen--Macaulay $\calox$--module of codimension
$2$, then $J=I$.
\end{proposition}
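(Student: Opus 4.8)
The plan is to identify the combinatorially defined module $K$ with the image of the transpose of $\Jac(\vp)$, and then to pair with the Euler vector field. First I would note that the hypothesis already places us in the setting of Proposition~\ref{prop:coregulart1cm}: since $T^1_{X/S}$ is supported exactly on the critical locus of $\vp$ and $X,S$ are smooth, Cohen--Macaulayness of codimension $2$ forces $\vp$ to be smooth off a set of codimension $2$, while the generic stabilizer is finite by hypothesis. Hence the natural map $\fg\otimes\calox\to T^0_{X/S}$ is an isomorphism, so $T^0_{X/S}$ is free and coincides, as a submodule of $T^0_X=(\calox)^N$, with the $\calox$--span of the linear vector fields coming from $\fg$. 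Writing $\iota$ for the inclusion $T^0_{X/S}\to T^0_X$, the dual Zariski--Jacobi sequence truncates (as $T^1_X=0$) to a length--two free resolution $0\to T^0_{X/S}\xto{\iota} T^0_X\xto{\Jac(\vp)} T^0_S(\calox)\to T^1_{X/S}\to 0$.

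Next I would apply $\Hom_\calox(-,\calox)$ to this resolution and identify the two relevant modules. Since $K$ consists of those $b$ orthogonal to every linear vector field from $\fg$, that is, orthogonal to all of $T^0_{X/S}$, under the identification of $(\calox)^N$ with its own dual via the standard basis we have $K=\Ker(\iota^\vee)$. On the other hand $\Imm(\Jac(\vp)^\vee)$ is the $\calox$--span of the columns of the transpose of $\Jac(\vp)$, namely of the gradient vectors $(\partial f_j/\partial x_i)_{i=1}^N$ for $j=1,\dots,d$. Because the dualized sequence is a complex, $\Imm(\Jac(\vp)^\vee)\subseteq\Ker(\iota^\vee)$ holds unconditionally; after pairing this will yield the easy inclusion $J\subseteq I$.

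The crux is the reverse inclusion, and this is exactly where the Cohen--Macaulay hypothesis is used. As $T^1_{X/S}$ is Cohen--Macaulay of codimension $2$, we have $\Hom_\calox(T^1_{X/S},\calox)=0$ and $\Ext^1_\calox(T^1_{X/S},\calox)=0$; the vanishing of $\Ext^1$ says precisely that the dualized complex is exact at the middle term, i.e.\ $\Ker(\iota^\vee)=\Imm(\Jac(\vp)^\vee)$. Thus $K$ is generated by the gradients of $f_1,\dots,f_d$.

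Finally I would pair with the Euler vector field $E=\sum_i x_i\,\partial/\partial x_i$. For the generator $b=(\partial f_j/\partial x_i)_i$ of $K$, the element $\sum_i b_i x_i$ equals $\deg(f_j)\,f_j$ by Euler's identity, since each $f_j$ is homogeneous of positive degree. As $K$ is generated by these gradients and $b\mapsto\sum_i b_i x_i$ is $\calox$--linear, the ideal $I$ is generated by $\deg(f_1)f_1,\dots,\deg(f_d)f_d$; in characteristic zero each $\deg(f_j)$ is a unit, so these generate $J=(f_1,\dots,f_d)\calox$, giving $J=I$. I expect the main difficulty to be the careful bookkeeping of the duality step---matching $\Ker(\iota^\vee)$ with the combinatorial $K$ and $\Imm(\Jac(\vp)^\vee)$ with the span of the gradients---and in particular recognizing that the Cohen--Macaulay, codimension--$2$ condition is exactly the $\Ext^1$--vanishing that powers the nontrivial inclusion.
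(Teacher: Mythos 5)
Your proof is correct and follows essentially the same route as the paper: invoke Proposition \ref{prop:coregulart1cm} to identify $T^0_{X/S}$ with $\fg\otimes\calox$, dualize the resulting free resolution of $T^1_{X/S}$ coming from the Zariski--Jacobi sequence, and pair with the Euler vector field so that Euler's identity (plus characteristic zero) converts the gradient generators into the invariants. The only difference is presentational: you phrase the key exactness of the dualized complex as the vanishing of $\Ext^1_{\calox}(T^1_{X/S},\calox)$, whereas the paper obtains the same fact by splitting the resolution into short exact sequences and dualizing; your formulation makes more explicit exactly where the Cohen--Macaulay codimension-$2$ hypothesis is used.
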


\begin{proof}
For a homogeneous invariant $g\in\calox$, $(\frac{\partial g}{\partial x_i})\in K$,
and hence $g\in I$.  It follows that $J\subseteq I$.

Since $\dim(T^1_{X/S})$ is the dimension of the critical locus,
$\vp$ is smooth off a set of codimension $2$.
By Proposition \ref{prop:coregulart1cm}, $T^0_{X/S}$ is generated by
the action $\theta:\fg\to\gl(V)\cong V\otimes V^{\vee}$ of $\fg$.
As $\calox=\CC[V]$,
the first map $\rho$ in the dual Zariski--Jacobi sequence
\begin{align}
\label{al:prop415les}
\xymatrix{
0\ar[r]&\fg\otimes \calox \ar[r]^-{\rho}& T^0_X \ar[r]^-{\Jac(\vp)}&
T^0_S(\calox)\ar[r]&
T^{1}_{X/S}\ar[r]&0
}
\end{align}
is given by the composition
$$
\xymatrix{
\fg\otimes \CC[V]
\ar[r]^-{\theta\otimes 1} &
V\otimes V^\vee\otimes \CC[V]
\ar[r] & 
V\otimes \CC[V](1)
\cong T^0_X.
}
$$
Split \eqref{al:prop415les} into short exact sequences and
take $\calox$--duals
to get the exact sequence
\begin{align}
\label{xy:ses}
\xymatrix{
0\ar[r]
& N^* \ar[r]^{\psi}
& \Omega^1_X \ar[r]^-{\rho^*}
& \fg^*\otimes \calox,
}
\end{align}
where
$N=\mathrm{Image}(\Jac(\vp))$,
$\psi$ is the dual of $\Jac(\vp)$,
and $\Omega^1_X\cong (T^0_X)^*=(\Omega^1_X)^{**}$
as the smoothness of $X$ implies the reflexivity of $\Omega^1_X$.
By this identification, 
the Euler derivation $E\in T^0_X$ gives a map $\tilde{E}:\Omega^1_X\to \calox$
defined by $\tilde{E}\left(\sum a_i dx_i\right)=\sum a_i x_i$.

Observe that
under the obvious identification of $(\calox)^N$ with $\Omega^1_X$,
$K\cong \ker(\rho^*)$, and $I=\tilde{E}(\ker(\rho^*))$.
Let $b\in \ker(\rho^*)$.
By the exactness of \eqref{xy:ses}, there exists an
$n\in N^*$ such that $b=\psi(n)$.
Then by the form of $\psi$ and the homogeneity of $f_1,\ldots,f_d$, we have
$$
\begin{pmatrix} \tilde{E}(b) \end{pmatrix}
=
\begin{pmatrix}
x_1 & \cdots & x_n
\end{pmatrix}
\begin{pmatrix}
b_1 \\ \vdots \\ b_n
\end{pmatrix}
=
\begin{pmatrix}
x_1 & \cdots & x_n
\end{pmatrix}
\begin{pmatrix}
\frac{\partial f_1}{\partial x_1}
& \cdots &
\frac{\partial f_d}{\partial x_1} \\
\vdots & \ddots & \vdots \\
\frac{\partial f_1}{\partial x_n}
& \cdots & 
\frac{\partial f_d}{\partial x_n}
\end{pmatrix}
\begin{pmatrix}
n_1 \\ \vdots \\ n_d
\end{pmatrix}
\in J.\qedhere$$
\end{proof}

\section{Examples of group actions}
\label{sec:groupexamples}
\begin{sit}
We now apply the results of \S\ref{sec:groupactions} to a number of 
coregular and cofree group actions.
For each $\vp:X\to S=X\slashslash G$, we determine when
$T^1_{X/S}$ is Cohen--Macaulay of codimension $2$, and determine the
liftable vector fields.
These examples come from classifications
that provide the number and degrees of
the generating invariants.

To check our hypotheses for $\vp$, however, it is necessary to choose specific
generating invariants.
For many of the examples below, we have used Macaulay2 \cite{M2} to
find all invariants of the given degrees%
\footnote{The vector space of degree $d$ invariants of a
linear representation of a connected group is just the
space of degree $d$ polynomials annihilated
by the linear vector fields corresponding to the Lie algebra action.},
make a choice of generating invariants to find an explicit form for
$\vp$, 
compute the dimension of the critical locus of $\vp$, and find the module
of liftable vector fields.
A different choice of generating invariants gives a 
different presentation of $\CC[X]^G$ as a polynomial ring, a new
$\vp'$ (equal to $\vp$ composed with a diffeomorphism in $S$),
and a different module of liftable vector fields.

Note also that there are many other examples in, e.g., \cite{Littelmann}.
\end{sit}

\subsection*{Special linear group}

\begin{example}
Let $\rho:\SL(2,\CC)\to\GL(V)$, $V=\CC x\oplus \CC y$, be 
the standard representation of $G=\SL(2,\CC)$.
Differentiating this representation gives the vector fields
\begin{equation}
\label{eqn:sl2diffops}
d\rho(e)=x\partial_y,\quad d\rho(f)=y\partial_x,\quad d\rho(h)=x\partial_x-y\partial_y
\end{equation}
on $V$, where $\sltwo=\CC\{e,f,h\}$.

Consider the 
$n$th symmetric power $X=\Sym^n(V)$ of $\rho$,
where $\Sym^n(V)$ has 
the $\CC$-basis $z_i=x^{n-i}y^i$ for $i=0,\ldots,n$.
Differentiating this $G$--representation shows that
$e$, $f$, and $h$ act on each $x^{n-i}y^i$ by the corresponding
differential operator in \eqref{eqn:sl2diffops}.
Let $\vp:X\to X\slashslash G=S$.

For $1\leq n\leq 4$, the resulting representation appears in the list of
cofree representations of \cite{Littelmann},
along with the dimension $g$ of the generic isotropy subgroup,
and the number ($=\dim(S)$) and degrees of the generating invariants.
For $n=1,2$, Proposition \ref{prop:coregulart1cm} does not
apply because $g=1$.

When $n=3$, then $X$ is the space of so-called  ``binary cubics'',
and
$g=0$, $\dim(S)=1$. As
a sole generating invariant one can take
$$f_1=-3z_1^2z_2^2       +4z_0z_2^3     +4z_1^3z_3     -6z_0 z_1 z_2 z_3+z_0^2z_3^2.$$
Since 
it is readily checked that $\varphi=(f_1)$ is smooth off a set of
codimension $2$, it follows from Proposition \ref{prop:coregulart1cm}
that $T^1_{X/S}$ is Cohen--Macaulay of
codimension $2$.  
We compute that 
any $\eta\in\Der_S(-\log s_1)$ will lift, 
so Theorem \ref{mthm} implies that $f_1$ itself
will define a free divisor.
Note that a linear change of coordinates takes $f_1$ to
the example 
\cite[Ex.\ 1.4(2) or \S6.4]{gmns}.

When $n=4$, the case of ``binary quartics'', then
$g=0$, $\dim(S)=2$, and
the generating invariants are
$$
f_1=3z_2^2-4z_1z_3+z_0z_4
\qquad
\mathrm{and}
\qquad
f_2=z_2^3-2z_1z_2z_3+z_0z_3^2+z_1^2z_4-z_0z_2z_4.
$$
The map $\varphi=(f_1,f_2)$ is smooth off a set of codimension $2$, so by 
Proposition \ref{prop:coregulart1cm}, the module $T^1_{X/S}$ is Cohen--Macaulay
of codimension $2$.
(As expected by Proposition \ref{prop:numericalcriterion},
$\dim(X)=5=\deg(f_1)+\deg(f_2)$.) 
The liftable vector fields are $\Der_S(-\log
(s_1^3-27s_2^2))$.
Since all reduced plane curve singularities are free divisors,
by Theorem \ref{mthm} any reduced plane curve containing 
$s_1^3-27s_2^2$ as a component lifts through this group action to a free divisor in 
$\Sym^4(V)\cong \CC^5$.
\end{example}

\begin{example}
Let $V$ be the standard representation of $G=\SL(3,\CC)$.
Then $X=\Sym^3(V)\cong \CC^{10}$, the space of ``ternary cubics'', has finite generic isotropy subgroup,
$S=X\slashslash G$ has dimension $2$, and the invariants $g_S$, $g_T$ have degree 4 and 6
(e.g., \cite[4.4.7, 4.5.3]{sturmfels}).
Then $\varphi=(g_S,g_T)$ is smooth off a set of
codimension $2$, and the liftable vector fields are exactly
$\Der_S(-\log (64s_1^3-s_2^2))$.
By Proposition \ref{prop:coregulart1cm} and
Theorem \ref{mthm}, any reduced plane curve singularity
which contains $64s_1^3-s_2^2$ as a component lifts via $\varphi$ to a free divisor in $X$.
\end{example}

\begin{example}
Let $V$ be the standard representation of $\SL(2,\CC)$, and
let $X=\Sym^2(V)\otimes \Sym^2(V)$, a representation of
$G=\SL(2,\CC)\times \SL(2,\CC)$.
Use the basis $y_{ij}=x_1^ix_2^{2-i}\otimes x_1^j x_2^{2-j}$,
$0\leq i,j\leq 2$, for $X$.
By \cite{Littelmann}, this cofree representation
has finite generic isotropy subgroup,
$S=X\slashslash G$ of dimension $3$, with invariants
$g_2$, $g_3$, and $g_4$, $\deg(g_i)=i$.
We compute generating invariants as
\begin{align*}
g_2=&
2y_{11}^2-2y_{12}y_{10}+y_{20}y_{02}-2y_{21}y_{01}+y_{22}y_{00},
\\
g_3=&y_{20}y_{11}y_{02}-y_{21}y_{10}y_{02}-y_{20}y_{12}y_{01}+y_{22}y_{10}y_{01}+y_{21}y_{12}y_{00}-y_{22}y_{11}y_{00},
\\
g_4=&-4y_{11}^4+8y_{12}y_{11}^2y_{10}-4y_{12}^2y_{10}^2+2y_{20}y_{12}
y_{10}y_{02}-4y_{21}y_{11}y_{10}y_{02}+2y_{22}y_{10}^2y_{02}\\
&-\tfrac{1}{2}
y_{20}^2y_{02}^2-4y_{20}y_{12}y_{11}y_{01}+8y_{21}y_{11}^2y_{01}-4
y_{22}y_{11}y_{10}y_{01}+2y_{21}y_{20}y_{02}y_{01}\\
&-4y_{21}^2
y_{01}^2 +2y_{22}y_{20}y_{01}^2+2y_{20}y_{12}^2y_{00}-4y_{21}y_{12}
y_{11}y_{00}+2y_{22}y_{12}y_{10}y_{00}\\
&+2y_{21}^2y_{02}y_{00}-3y_{22}
y_{20}y_{02}y_{00}+2y_{22}y_{21}y_{01}y_{00}-\tfrac{1}{2}y_{22}^2y_{00}^2.
\end{align*}
For $\vp=(g_2,g_3,g_4)$,
$\vp$ is smooth off a set of codimension $2$ and
the liftable vector fields are
$\Der(-\log \Delta)$ for
the free divisor defined by
$\Delta=s_1^6-10s_1^3s_2^2+4s_1^4s_3+27s_2^4-18s_1s_2^2s_3+5s_1^2s_3^2+2s_3^3.$
By
Proposition \ref{prop:coregulart1cm} and Theorem
\ref{mthm}, any
free divisor in $\CC^3$ containing $\Delta$ as a component lifts to a free divisor in $X\cong \CC^9$.
Note that $\Delta$ is equivalent to the classical swallowtail.
\end{example}

\begin{example}
Let $V$ be the standard representation of $\SL(2,\CC)$, and
let $X=\Sym^3(V)\otimes V$, a representation of 
$G=\SL(2,\CC)\times \SL(2,\CC)$.
On $X$, use the basis
$z_{ij}=x_1^ix_2^{3-i}\otimes x_j$, 
where $0\leq i\leq 3$, $1\leq j\leq 2$.
By \cite{Littelmann}, this cofree representation has
finite generic isotropy subgroup, $S=X\slashslash G$ of dimension $2$, with
invariants $g_2$ and $g_6$, $\deg(g_i)=i$.
We compute the invariants as
\begin{align*}
g_2=&
3z_{22}z_{11}-3z_{21}z_{12}-z_{32}z_{01}+z_{31}z_{02},
\\
g_6=&
27z_{22}^3z_{11}^3-81z_{21}z_{22}^2z_{11}^2z_{12}+81z_{21}^2z_{22}
z_{11}z_{12}^2-27z_{21}^3z_{12}^3-27z_{32}z_{22}^2z_{11}^2z_{01}
\\&
+27z_{32}
z_{21}z_{22}z_{11}z_{12}z_{01}+27z_{31}z_{22}^2z_{11}z_{12}z_{01}+9
z_{32}^2z_{11}^2z_{12}z_{01}
\\&
-27z_{31}z_{21}z_{22}z_{12}^2z_{01}-18z_{31}
z_{32}z_{11}z_{12}^2z_{01}+9z_{31}^2z_{12}^3z_{01}+9z_{32}z_{21}z_{22}^2
z_{01}^2
\\&
-9z_{31}z_{22}^3z_{01}^2+6z_{32}^2z_{22}z_{11}z_{01}^2-15z_{32}^2z_{21}
z_{12}z_{01}^2+9z_{31}z_{32}z_{22}z_{12}z_{01}^2
\\&
-\tfrac{2}{3}z_{32}^3z_{01}^3+27z_{32}
z_{21}z_{22}z_{11}^2z_{02}-9z_{32}^2z_{11}^3z_{02}-27z_{32}z_{21}^2z_{11}
z_{12}z_{02}
\\&
-27z_{31}z_{21}z_{22}z_{11}z_{12}z_{02}+18z_{31}z_{32}
z_{11}^2z_{12}z_{02}+27z_{31}z_{21}^2z_{12}^2z_{02}-9z_{31}^2z_{11}z_{12}^2z_{02}
\\&
-18z_{32}z_{21}^2z_{22}z_{01}z_{02}+18z_{31}z_{21}z_{22}^2z_{01}
z_{02}+9z_{32}^2z_{21}z_{11}z_{01}z_{02}
\\&
-21z_{31}z_{32}z_{22}z_{11}z_{01}
z_{02}+21z_{31}z_{32}z_{21}z_{12}z_{01}z_{02}-9z_{31}^2z_{22}z_{12}z_{01}
z_{02}
\\&
+2z_{31}z_{32}^2z_{01}^2z_{02}+9z_{32}z_{21}^3z_{02}^2-9z_{31}z_{21}^2z_{22}
z_{02}^2-9z_{31}z_{32}z_{21}z_{11}z_{02}^2
\\&
+15z_{31}^2z_{22}z_{11}z_{02}^2-6
z_{31}^2z_{21}z_{12}z_{02}^2-2z_{31}^2z_{32}z_{01}z_{02}^2+\tfrac{2}{3}z_{31}^3z_{02}^3.
\end{align*}
Then $\vp=(g_2,g_6)$ is smooth off a set of
codimension $2$, and the liftable vector fields are
$\Der(-\log \Delta)$, for the plane curve
$\Delta=(s_1^3-s_2)(2s_1^3-3s_2)$.
%
By Proposition \ref{prop:coregulart1cm} and Theorem \ref{mthm},
any reduced plane curve containing $\Delta$ among its components
lifts to a free divisor in $X\cong \CC^8$.
In particular, $(g_2^3-g_6)(2g_2^3-3g_6)$ defines a free divisor.
\end{example}

\subsection*{Special orthogonal group}

\begin{example}
Let $V$ be the standard representation of $G=\SO(n,\CC)$.
Consider the representation $\Sym^2(V)$, which we identify with the 
action of $G$ on the space $X=\Symm_n(\CC)$ of $n\times n$ symmetric matrices by
$A\cdot M=AMA^T$.
Since multiples of the identity are fixed by $G$,
$X$ decomposes as the direct sum of the trivial $1$-dimensional representation
(on $\CC\cdot I$ for the identity $I$) and a representation (on the traceless matrices)
which appears on the lists of 
\cite[Summary Table]{PV} and \cite{Littelmann} of irreducible representations.
As a result, we know that the generic stabilizer is finite,
the generating invariants are $g_1,\ldots,g_n$, 
with $\deg(g_i)=i$,
and $S=X\slashslash G$ has dimension $n$.

Since $G$ acts by conjugation,
it preserves
the characteristic polynomial $\det(t\cdot I-M)=t^n+h_1
t^{n-1}+\cdots+h_n$ of $M$.
When restricted to the subspace $D$ of diagonal matrices,
$h_i=(-1)^i\sigma_i$, where $\sigma_i$ is the $i$th elementary
symmetric polynomial in the diagonal entries;
it follows that each $h_{k+1}\notin \CC[h_1,\ldots,h_k]$, and hence
$g_i=h_i$ are generating invariants for $i=1,\ldots,n$.
Let $\vp=(g_n,\ldots,g_1)$; under the identification of $(s_1,\ldots,s_n)\in S$
with the monic degree $n$ polynomial $t^n+s_n t^{n-1}+\cdots+s_1\in \CC[t]$, 
$\vp(M)=\det(t\cdot I-M)$.

The derivative of $\vp$ may be
computed at points having a symmetric ``Jordan form'' described in
\cite[I, \S2--3]{gantmacher};
as each $A\in\Symm_n(\CC)$ is in a $G$-orbit of such a normal form and
$\vp$ is invariant under the $G$ action,
this calculation shows that the critical locus of $\vp$ is the
($\codim \geq 2$) set of symmetric
matrices for which the Jordan canonical form has at least two Jordan blocks
with the same eigenvalue.
Then by Proposition \ref{prop:coregulart1cm}, $T^1_{X/S}$ is
Cohen--Macaulay of codimension $2$.
Also, the discriminant is the locus of monic polynomials of degree
$n$ having a repeated root, i.e., the free divisor defined by the
(classical) discriminant $\Delta$ of the polynomial
$t^n+\sum_{k=0}^{n-1} s_{k+1} t^{k}$.

Observe that $\theta=\vp|_D=(\theta_1,\ldots,\theta_n)$ is a finite map with the same discriminant,
which may be understood as the quotient $\CC^n\to \CC^n\slashslash S_n$ under the
action of the symmetric group.
Using \cite[\S1.7]{zakalyukin}, generators
for $\Der_S(-\log \Delta)$ are of the form
$\eta_k=\sum_{\ell} \alpha_{k\ell} \frac{\partial}{\partial s_\ell}$,
where $\alpha_{k\ell}\circ \theta=(\nabla \theta_k,\nabla \theta_\ell)$,
$\nabla$ is the gradient, and $(\cdot,\cdot)$ is the dot product.
Each $\eta_k$ lifts across $\vp=(\vp_1,\ldots,\vp_n)$ to
what is almost $\nabla \vp_k$,
except with off-diagonal coefficients scaled by
$\frac{1}{2}$ when the coordinates $\{x_{ij}\}_{1\leq i\leq j\leq n}$ on $\Symm_n(\CC)$
are obtained by restricting the usual coordinates on $M_{n,n}$.
By Proposition \ref{prop:coregulart1cm} and Theorem \ref{mthm},
using $\vp$ to pull back any 
free divisor containing $\Delta$ as a component produces another free
divisor.

For instance, when $n=2$ we have
$$g_1=-x_{11}-x_{22},
\qquad
g_2=x_{11}x_{22}-x_{12}^2,
\qquad\textrm{and }
\Delta=s_2^2-4s_1.$$
For $n=3$,
$$\Delta=
s_2^2s_3^2-4s_1s_3^3-4s_2^3+18s_1s_2s_3-27s_1^2.$$
For $n=4$,
\begin{align*}
\Delta=&
s_2^2s_3^2s_4^2-4s_1s_3^3s_4^2-4s_2^3s_4^3+18s_1s_2s_3s_4^3-27s_1^2s_4^4-4s_2^2s_3^3+16s_1s_3^4
\\&
+18s_2^3s_3s_4
-80s_1s_2s_3^2s_4-6s_1s_2^2s_4^2+144s_1^2s_3s_4^2-27s_2^4+144s_1s_2^2s_3
\\&
-128s_1^2s_3^2-192s_1^2s_2s_4+256s_1^3.
\end{align*}

Now let $T\subseteq \Symm_n(\CC)$ consist of the traceless matrices.
Under $\vp$, $T$ maps to the subspace $Z\subset \CC[t]$ consisting of monic
polynomials of degree $n$ with the coefficient of $t^{n-1}$ equal to
zero.
Let $\vp':T\to Z$ be defined by restricting $\vp$.
By our calculations of $d\vp$,
it follows fairly easily that $\vp'$ is a submersion at $A\in T$ if
and only if $\vp$ is a submersion at $A$.
Accordingly, $C(\vp')=C(\vp)\cap T$, and
the discriminant of $\vp'$ is
defined by $\Delta'=\Delta |_Z$.
By the same argument used for $\vp$, the map $\vp'$ is smooth off a set of
codimension $\geq 2$.
The module of logarithmic vector fields
$\Der_S(-\log \Delta)$ contains an element of the form
$\cdots+n\frac{\partial}{\partial s_n}$, so it is easy to find
$\alpha_1,\ldots,\alpha_{n-1}\in \Der_S(-\log \Delta)$
which are also tangent to $Z$, and hence restrict to elements of
$\Der_Z(-\log \Delta')$.
Note that $\vp'|_{D\cap T}$ is a finite map with the same
discriminant; by \cite{arnold},
the restrictions of
$\alpha_1,\ldots,\alpha_{n-1}$ are a free basis for $\Der_Z(-\log
\Delta')$.
Since each $\alpha_i$ lifts via $\vp$ to a vector field which is
tangent to $T$, the restriction of each $\alpha_i$ lifts via $\vp'$ to a
vector field on $T$.
Thus, by Proposition \ref{prop:coregulart1cm} and Theorem
\ref{mthm}, using $\vp'$ to pull back any free divisor
having $\Delta'$ as a component produces another free divisor.
\end{example}

\subsection*{Orthogonal group}

\begin{sit}
Let the orthogonal group
$G=\Orthog(n,\CC)$ act on the space $V=M_{n,m}$ of complex $n\times m$ matrices by multiplication on the left.
By \cite[\S9.3]{PV}, the ring of invariants is generated by 
the $\binom{m+1}{2}$ inner
products of pairs of
columns, allowing repetition.
If $\Symm_m(\CC)$ denotes the space of $m\times m$ complex symmetric
matrices, these are equivalently the 
degree 2 polynomials given by the entries of
$\varphi:X=M_{n,m}\to S=\Symm_m(\CC)$ defined by
$\varphi(A)=A^T\cdot A$.
By \cite[\S9.4]{PV}, there are relations between these generators
precisely when $n<m$.

We thus restrict ourselves to $n\geq m$,
so that $V\slashslash G\cong \Symm_m(\CC)$ is coregular and the
quotient is given by $\vp$. 
We prove some basic properties. 
\end{sit}

\begin{lemma}
\label{lemma:onproperties}
Let $\varphi$ be as above, with $n\geq m$.
Then
\begin{enumerate}[\rm(i)]
\item
\label{cond:submersion}
For $A\in X$, $\varphi$ is a submersion at $A$ if and only if
$\rank(A)=m$. 
\item
\label{cond:crit}
The critical locus $C(\varphi)=\{A\in X:\rank(A)<m\}$ has codimension $n-m+1$.
The discriminant $\Delta\subset \Symm_m(\CC)$ is
the set of singular matrices, $\Delta=V(\det)$.
\item
\label{cond:stab}
The generic stabilizer of this action is trivial when $n=m$ and otherwise
isomorphic to $\Orthog(n-m,\CC)$. 
\item
\label{cond:lift}
All vector fields $\eta\in\Der_S(-\log \Delta)$ lift.
\end{enumerate}
\end{lemma}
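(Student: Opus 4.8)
The plan is to obtain the liftable vector fields from a group action through Lemma \ref{lemma:grouplift}, and then to identify the module they generate with $\Der_S(-\log\Delta)$. Let $H=\GL(m,\CC)$ act on $S=\Symm_m(\CC)$ by congruence, $\rho_S(B)(M)=BMB^T$, and on $X=M_{n,m}$ by $\rho_X(B)(A)=AB^T$; both are representations of $H$, and the identity $\varphi(AB^T)=(AB^T)^T(AB^T)=B(A^TA)B^T$ shows that $\varphi$ is $H$-equivariant. By Lemma \ref{lemma:grouplift}, every vector field obtained by differentiating $\rho_S$ lifts across $\varphi$. Differentiating yields the congruence fields $\eta_Y(M)=YM+MY^T$ for $Y\in\gl(m,\CC)$, and from $\det((I+tY)M(I+tY)^T)=\det(I+tY)^2\det(M)$ one reads off $\eta_Y(\det)=2\tr(Y)\det$. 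In particular each $\eta_Y$ is logarithmic for $\Delta=V(\det)$, so the $\calo_S$-module $D=\calo_S\cdot\{\eta_Y:Y\in\gl(m,\CC)\}$ satisfies $D\subseteq\Der_S(-\log\Delta)\cap L$, where $L=\ker(\delta_{KS})$ is the module of liftable fields.

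It remains to prove $\Der_S(-\log\Delta)\subseteq L$, and the natural route is to show that $D$ fills out $\Der_S(-\log\Delta)$. The congruence action has the rank strata of $\Symm_m(\CC)$ as its orbits: the open orbit is $\{\det\neq0\}$, and by \eqref{cond:crit} the smooth locus of $\Delta$ is the single orbit of rank-$(m-1)$ matrices. On the open orbit the $\eta_Y$ span every tangent space, so there $D=\Der_S(-\log\Delta)=T^0_S$; at a smooth point $M_0\in\Delta$ the fundamental fields $\eta_Y$ span $T_{M_0}\Delta$, while $\eta_{E_{11}}$ (with $\eta_{E_{11}}(\det)=2\det$) contributes the one remaining logarithmic generator, so a Nakayama argument gives $D=\Der_S(-\log\Delta)$ locally at $M_0$. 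Hence $D$ and $\Der_S(-\log\Delta)$ agree off the locus $\{\rank\leq m-2\}$, which has codimension $3$ in $\Symm_m(\CC)$; since $\Der_S(-\log\Delta)$ is reflexive, it is the reflexive hull of $D$.

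The main obstacle is the passage from ``$D$ lifts'' to ``all of $\Der_S(-\log\Delta)$ lifts.'' The cleanest outcome would be the \emph{global} equality $D=\Der_S(-\log\Delta)$, for then $\Der_S(-\log\Delta)\subseteq L$ is immediate because $L$ is an $\calo_S$-module and every $\eta_Y\in L$. Concretely, this asks that every syzygy of the cofactors $\adj(M)_{ij}=\partial\det/\partial x_{ij}$ arise from the relations $M\,\adj(M)=\det(M)\,I$, i.e.\ from the trace-free $\eta_Y$---a statement about the first syzygies of the submaximal minors of a generic symmetric matrix. I verified it for $m=2$ by a direct Koszul computation; in general I would try to deduce it from the codimension-three agreement above by showing that $D$ is itself reflexive (equivalently $S_2$), so that $D=D^{\vee\vee}=\Der_S(-\log\Delta)$. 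Failing a clean proof of reflexivity of $D$, one can instead argue that the obstruction $\delta_{KS}(\xi)\in T^1_{X/S}$ of a section $\xi\in\Der_S(-\log\Delta)$ vanishes off $\varphi^{-1}\{\rank\leq m-2\}$ and hence vanishes once $T^1_{X/S}$ is shown to have no associated primes in that high-codimension locus. I expect this reflexivity/unmixedness step to be the delicate point, the equivariance and orbit computations being routine.
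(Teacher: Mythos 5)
Your proposal only addresses part \eqref{cond:lift} of the lemma; parts \eqref{cond:submersion}--\eqref{cond:stab} (the submersion criterion, the identification $\Delta=V(\det)$, and the generic stabilizer), which the paper proves by direct computation with $d\varphi_{(A)}(B)=A^TB+B^TA$ and by exhibiting preimages of singular matrices, are nowhere established, even though your argument quotes \eqref{cond:crit} as an input. For \eqref{cond:lift} itself, your opening moves coincide with the paper's: the $\GL(m,\CC)$-equivariance of $\varphi$ with respect to $\rho_X(B)(A)=AB^T$ and $\rho_S(B)(M)=BMB^T$, plus Lemma \ref{lemma:grouplift}, shows that every congruence field $\eta_Y(M)=YM+MY^T$ lifts, and these fields are logarithmic for $\det$ since $\eta_Y(\det)=2\tr(Y)\det$.

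The genuine gap is the step you yourself flag as delicate: passing from ``the congruence fields lift'' to ``all of $\Der_S(-\log\Delta)$ lifts.'' Your Nakayama argument at rank-$(m-1)$ points is correct and does give $D=\Der_S(-\log\Delta)$ off the rank-$(m-2)$ locus (codimension $3$), whence $\Der_S(-\log\Delta)=D^{\vee\vee}$; but the needed \emph{global} equality $D=\Der_S(-\log\Delta)$ is equivalent to reflexivity of $D$, which you do not prove (a check for $m=2$ is not a proof). This generation statement is precisely the nontrivial algebraic input, and the paper obtains it by citing the free resolution of the submaximal-minors ideal of a generic symmetric matrix due to J\'ozefiak, in the form recorded by Goryunov--Mond: the linear fields of the congruence action generate $\Der_S(-\log\det)$. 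Your fallback route---killing the obstruction $\delta_{KS}(\xi)$ by showing $T^1_{X/S}$ has no associated primes in codimension $\geq 3$---also does not close the gap as stated: the lemma is asserted for all $n\geq m$, and the only tool in the paper for controlling $T^1_{X/S}$ (Proposition \ref{prop:coregulart1cm}) requires a finite generic stabilizer and a critical locus of codimension $\geq 2$, both of which fail outside the case $n=m+1$ (for $n=m$ the critical locus has codimension $1$ by \eqref{cond:crit}, and for $n\geq m+2$ the stabilizer $\Orthog(n-m,\CC)$ is positive-dimensional by \eqref{cond:stab}). So neither of your proposed completions yields the lemma in the stated generality; some form of the J\'ozefiak-type syzygy result, or an independent proof of it, is indispensable here.
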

\begin{proof}
Differentiating $t\mapsto \varphi(A+tB)$
at the origin 
shows that
$d\varphi_{(A)}(B)=A^TB+B^TA$.
If $\rank(A)=m$ and  
$C\in \Symm_m(\CC)$
that we identify with the tangent space $T_{\varphi(A)} \Symm_m(\CC)$,
then there exists a $K\in M(n,m,\CC)$ such that
$K^TA=\frac{1}{2}C$, and so
$d\varphi_{(A)}(K)=C$. 
If $\rank(A)<m$, then let $v$ be a nonzero column vector in $\ker(A)$.
For any $B$, $v^T(A^TB+B^TA)v=0$, but 
it is easy to produce some 
$C\in\Symm_m(\CC)$ with $v^TCv\neq 0$.
This proves \eqref{cond:submersion}.

The first part of \eqref{cond:crit} follows from 
\eqref{cond:submersion} and linear algebra.
Since $\rank(\varphi(A))\leq \rank(A)$, 
we have
$\Delta\subseteq V(\det)$.
If $B\in V(\det)$, then 
let $D=G^TBG$ be the diagonalization of $B$ as the matrix of a
symmetric bilinear form.
If $H=G^{-1}$, then
since $D$ has diagonal entries in $\{0,1\}$, $D=D^2$ and
$B=H^TDH=(DH)^T(DH)$.
Appending zeros to the bottom of the $m\times m$ $DH$ produces an
$A\in X$ with $\varphi(A)=B$ and $\rank(A)=\rank(B)$.
Thus $V(\det)\subseteq \Delta$.

\eqref{cond:stab} follows from computing the stabilizer at
$\begin{pmatrix} I \\ 0 \end{pmatrix}\in X$.

For \eqref{cond:lift}, observe that
$\GL(m,\CC)$ has representations $\rho_X$ and $\rho_S$ on $X$ and $S$
defined by
$$\rho_X(A)(B)=BA^T\qquad \textrm{and}\qquad\rho_S(A)(C)=ACA^T.$$
Since $\varphi$ is equivariant with respect
to these representations, the vector fields from $\rho_S$ lift by
Lemma \ref{lemma:grouplift}.
By a free resolution due to 
J\'ozefiak
(see \cite[\S3.2]{goryunovmond}),
the vector fields from $\rho_S$ generate the module
$\Der_S(-\log \det)$.
It follows that all elements of $\Der_S(-\log \det)$ are liftable. 
\end{proof}

For the purposes of applying
Proposition \ref{prop:coregulart1cm} and Theorem \ref{mthm},
the case where $n=m+1$ is particularly nice.

\begin{proposition}
\label{prop:on}
Let $G=\Orthog(m+1,\CC)$ act on $X=M_{m+1,m}$ by
multiplication on the left, and let
$\varphi:X\to S=X\slashslash G\cong\Symm_m(\CC)$ be the quotient map.
If $f$ defines a free divisor in $S$ which contains the hypersurface
of singular matrices in $\Symm_m(\CC)$,
then
$f\circ \varphi$ defines a free divisor in $X$.
\end{proposition}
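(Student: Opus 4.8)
The plan is to derive the proposition as a direct application of Theorem \ref{mthm}, since essentially all of the technical content has already been packaged into Lemma \ref{lemma:onproperties} and Proposition \ref{prop:coregulart1cm}; what remains is to check the two hypotheses \eqref{mthm.ks} and \eqref{mthm.t1xs} in the case $n=m+1$ and then invoke the theorem.

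First I would establish condition \eqref{mthm.t1xs}, that $T^1_{X/S}$ is Cohen--Macaulay of codimension $2$, by verifying the hypotheses of Proposition \ref{prop:coregulart1cm}. The representation is coregular because $S\cong\Symm_m(\CC)$ is an affine space, hence smooth. Specializing Lemma \ref{lemma:onproperties}\eqref{cond:stab} to $n=m+1$, the generic stabilizer is $\Orthog(1,\CC)=\{\pm 1\}$, a finite group of dimension $0$. Finally, by Lemma \ref{lemma:onproperties}\eqref{cond:submersion} and \eqref{cond:crit}, $\varphi$ is smooth exactly off the critical locus $C(\varphi)=\{A:\rank(A)<m\}$, whose codimension is $n-m+1=2$. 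Thus Proposition \ref{prop:coregulart1cm} applies and gives \eqref{mthm.t1xs}.

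Next I would verify condition \eqref{mthm.ks}, namely that every $\eta\in\Der_S(-\log f)$ lifts across $\varphi$. By Lemma \ref{lemma:onproperties}\eqref{cond:crit} the discriminant is $\Delta=V(\det)$, and by Lemma \ref{lemma:onproperties}\eqref{cond:lift} every element of $\Der_S(-\log\det)$ lifts. Since $f$ defines a free divisor containing the hypersurface of singular matrices as a component, $\det$ divides $f$, so the product-rule argument used in the proof of Lemma \ref{lemma:notreduced} yields the containment $\Der_S(-\log f)\subseteq\Der_S(-\log\det)$. Hence every logarithmic vector field along $f$ lifts, which is condition \eqref{mthm.ks}.

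With both conditions established, Theorem \ref{mthm} immediately gives that $f\circ\varphi$ is reduced and defines a free divisor on $X$. I do not expect a serious obstacle at the level of this proposition: the genuinely hard steps---identifying the critical locus and computing its codimension, and in particular producing lifts of a generating set of $\Der_S(-\log\det)$ via J\'ozefiak's free resolution---have all been carried out in Lemma \ref{lemma:onproperties}. The only point requiring a small remark is the containment $\Der_S(-\log f)\subseteq\Der_S(-\log\det)$, so the present proof is primarily a clean assembly of the already-available ingredients.
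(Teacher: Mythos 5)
Your proposal is correct and follows essentially the same route as the paper's own proof: both verify hypothesis \eqref{mthm.t1xs} via Proposition \ref{prop:coregulart1cm} using Lemma \ref{lemma:onproperties}\eqref{cond:crit} and \eqref{cond:stab}, and hypothesis \eqref{mthm.ks} via the containment $\Der_S(-\log f)\subseteq\Der_S(-\log\det)$ together with Lemma \ref{lemma:onproperties}\eqref{cond:lift}, before invoking Theorem \ref{mthm}. The only difference is that you spell out the justification of the containment (irreducibility of $\det$ plus the product-rule argument of Lemma \ref{lemma:notreduced}), which the paper leaves implicit.
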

\begin{proof}
We check the hypotheses of Theorem \ref{mthm}.

By Lemma \ref{lemma:onproperties}\eqref{cond:crit}
and \eqref{cond:stab},
the critical locus 
$C(\varphi)$ has codimension $2$ and the
generic stabilizer has dimension $0$.
Hence by Proposition \ref{prop:coregulart1cm}, $T^1_{X/S}$ is Cohen--Macaulay
of codimension $2$, giving us \eqref{mthm.t1xs}.

Since $f=0$ contains the singular matrices, 
$\Der_S(-\log f)\subseteq \Der_S(-\log \det)$,
and all of these vector fields lift by Lemma
\ref{lemma:onproperties}\eqref{cond:lift}.
Thus we have \eqref{mthm.ks}.
\end{proof}

Theorem \ref{altmthm} may also produce free divisors from the
square case ($n=m$), provided 
we can prove $T^0_{X/S}$ is free.

\begin{example}
\label{ex:square}
Let $G=O(m,\CC)$ act on $X=M_{m,m}$ by multiplication on the left,
with coregular quotient
$\vp:X\to S\cong \Symm_m(\CC)$.
By Lemma \ref{lemma:onproperties}\eqref{cond:crit},
$\dim(T^1_{X/S})=\dim(X)-1$.
Although the proof of Proposition \ref{prop:coregulart1cm}
shows $\rho:\fg\otimes\calox\to T^0_{X/S}$ is an
isomorphism off $C(\vp)=\supp(T^1_{X/S})$,
it no longer automatically follows that $\rho$ is an isomorphism
everywhere. 
For $m\leq 8$, 
Macaulay2 calculations show that 
$T^0_{X/S}$ is free and $\rho$ is an isomorphism.
(As predicted by Proposition \ref{prop:numericalcriterion},
$\dim(X)=m^2\neq \sum_{\nu=1}^{\binom{m+1}{2}} 2$.)
Lemma \ref{lemma:onproperties}\eqref{cond:lift}
identifies the liftable vector fields.
Thus, for a free divisor in $S$
containing the singular matrices in $\Symm_m(\CC)$,
conditions
\eqref{mthm.t0xsiscm}
and \eqref{mthm.ks}
of Theorem \ref{altmthm} are satisfied,
and \eqref{mthm.singloc} is easy to check.

For instance,
use coordinates
$\begin{pmatrix} s_{11} & s_{12} \\ s_{12} & s_{22} \end{pmatrix}$
for $\Symm_2(\CC)$
and
$\begin{pmatrix} x_{11} & x_{12} \\ x_{21} & x_{22} \end{pmatrix}$
for $M_{2,2}$.
Then the free divisor defined by $f=s_{11}(s_{11}s_{22}-s_{12}^2)$
satisfies
\eqref{mthm.singloc} of Theorem \ref{altmthm}, and so the reduction
of 
$f\vp=(x_{11}^2+x_{21}^2)(x_{11}x_{22}-x_{12}x_{21})^2$
defines a free divisor in $X$.
Indeed, a change of coordinates on $X$ takes
$\frac{f\vp}{\det}$
to the well-known example
$x_{11}x_{12}(x_{11}x_{22}-x_{12}x_{21})$.
Similar examples for higher $m$ have been exhibited 
by David Mond. 

\end{example}

\subsection*{Symplectic group}

\begin{sit}
Let $n$ be even and let $G=\Sp(n,\CC)\subseteq \GL(n,\CC)$ be the
symplectic group acting on the space $V=M_{n,m}$ by multiplication on the
left.
Let $\Omega=\begin{pmatrix} 0 & I \\ -I & 0 \end{pmatrix}$,
for $I$ the identity matrix, and
let $(u,v)=u^T\Omega v$ denote the skew-symmetric bilinear form preserved by
$G$.
By \cite[\S9.3]{PV}, the ring of invariants is generated by the
$\binom{m}{2}$ degree $2$ polynomials of the form $(u,v)$, where
$u$ and $v$ are columns.
If $\Sk_m(\CC)$ denotes the space of $m\times m$ complex skew-symmetric  
matrices, these are equivalently the polynomials given by the entries of
$\vp:X=M_{n,m}\to S=\Sk_m(\CC)$ defined by
$\vp(A)=A^T\Omega A$.
By \cite[\S9.4]{PV}, there are relations between these generators precisely
when $m\geq n+2$.

Thus assume that $1<m\leq n+1$,
so that $V\slashslash G\cong \Sk_m(\CC)$ is coregular and the quotient
is given by $\vp$.
Recall that the rank of
any $C\in\Sk_m(\CC)$ is always even, and that the
square of the
Pfaffian
$\Pf:\Sk_m(\CC)\to\CC$
is equal to the determinant function.
We prove some basic properties.
\end{sit}

\begin{lemma}
\label{lemma:spproperties}
Let $\vp$ be as above, with $1<m\leq n+1$.
\begin{enumerate}[\rm(i)]
\item
\label{cond:spsubmersion}
For $A\in X$, $\vp$ is a submersion at $A$ if and only if $\rank(A)\geq m-1$.
\item
\label{cond:spcrit}
$C(\vp)=\{A\in X:\rank(A)<m-1\}$ has codimension $2(n-m+2)$.
The discriminant of $\vp$ is $\Delta=\{C\in\Sk_m(\CC): \rank(C)<m-1\}$.
\item
\label{cond:spvfs}
$\Der_S(-\log \Delta)$ is generated by the linear vector fields
coming from the $\GL(m,\CC)$ action $A\cdot C=ACA^T$.
\item
\label{cond:splift}
All vector fields from $\Der_S(-\log \Delta)$ lift across $\vp$.
\item
\label{cond:spstab}
The dimension of the generic stabilizer is
$\frac{1}{2}\left((n-m)^2+m\right)$ when $1\leq m\leq \frac{n}{2}$,
and
$\frac{1}{2}\left((n-m)^2+n-m\right)$ when $\frac{n}{2}\leq m\leq n+1$.
\end{enumerate}
\end{lemma}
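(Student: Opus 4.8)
The plan is to treat the five parts in order, reusing the pattern of Lemma \ref{lemma:onproperties}. For \eqref{cond:spsubmersion} I would first compute the differential by expanding $\vp(A+tB)=(A+tB)^T\Omega(A+tB)$ and reading off
\[
d\vp_{(A)}(B)=A^T\Omega B+B^T\Omega A,
\]
which lands in $\Sk_m(\CC)$ because $\Omega^T=-\Omega$. Writing $M=B^T\Omega A$ and using that $B^T\Omega$ ranges over all of $M_{m,n}$ as $B$ varies, the image of $d\vp_{(A)}$ is exactly $\{M-M^T\}$ as $M$ ranges over the matrices whose rows lie in the row space of $A$. If $\rank(A)\geq m-1$ this row space is either all of $\CC^m$ or a hyperplane, and in both cases an explicit choice of the free entries of $M$ shows the skew-symmetrization fills $\Sk_m(\CC)$, so $\vp$ is submersive. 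Conversely, if $\rank(A)\leq m-2$ then $\ker(A)\subseteq\CC^m$ has dimension $\geq 2$; for independent $u,w\in\ker(A)$ one finds $u^T\,d\vp_{(A)}(B)\,w=0$ for every $B$, whereas some $C\in\Sk_m(\CC)$ has $u^TCw\neq 0$, so $d\vp_{(A)}$ cannot be onto. This gives \eqref{cond:spsubmersion}.

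For \eqref{cond:spcrit}, the critical locus is the generic determinantal variety $\{\rank(A)\leq m-2\}\subseteq M_{n,m}$, of codimension $(n-(m-2))(m-(m-2))=2(n-m+2)$. That $\Delta=\{\rank(C)<m-1\}$ (equivalently $\rank(C)\leq m-2$, since skew ranks are even) I would obtain by two inclusions: $\rank(\vp(A))\leq\rank(A)$ gives $\vp(C(\vp))\subseteq\Delta$, and conversely any skew $C$ of rank $2k\leq m-2$ can be written $C=A^T\Omega A$ with $\rank(A)=2k$ by realizing $C$ through a partial symplectic frame in $(\CC^n,\Omega)$, which is possible since $2k\leq m-2\leq n-1$.

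Parts \eqref{cond:spvfs} and \eqref{cond:splift} are the heart of the matter, and \eqref{cond:spvfs} is the step I expect to be the main obstacle. The fundamental vector fields of the $\GL(m,\CC)$ action $C\mapsto \xi C+C\xi^T$, for $\xi\in\gl(m,\CC)$, are tangent to $\Delta$ because $\GL(m,\CC)$ preserves each rank stratum, giving a map $\gl(m,\CC)\otimes\calos\to\Der_S(-\log\Delta)$. To see that these generate, I would invoke the explicit free resolution of the (sub-)maximal Pfaffian ideal of $\Delta$ due to Józefiak--Pragacz, the skew-symmetric analogue of the Józefiak resolution used in Lemma \ref{lemma:onproperties}\eqref{cond:lift} (cf. \cite[\S3.2]{goryunovmond}): its first syzygy module is spanned precisely by these linear fields. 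When $m$ is even this is the classical statement that the Pfaffian hypersurface is a linear free divisor whose logarithmic fields come from the $\gl(m,\CC)$ action; when $m$ is odd, $\Delta$ has codimension $3$ and one must use the full resolution. Granting \eqref{cond:spvfs}, part \eqref{cond:splift} is immediate from Lemma \ref{lemma:grouplift}: the identity $\vp(AB^T)=B\vp(A)B^T$ shows $\vp$ is equivariant for the $\GL(m,\CC)$ actions $A\mapsto AB^T$ on $X$ and $C\mapsto BCB^T$ on $S$, so every $\rho_S$-field lifts, and these generate $\Der_S(-\log\Delta)$.

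Finally, for \eqref{cond:spstab} I would argue by orbit--stabilizer. The stabilizer of a generic $A$ is the pointwise stabilizer in $\Sp(n,\CC)$ of its column space $U$, a generic subspace of dimension $\min(n,m)$. Identifying $\mathfrak{sp}(n,\CC)\cong\Symm_n(\CC)$ via $\xi\mapsto\Omega\xi$, the isotropy subalgebra $\{\xi:\xi|_U=0\}$ corresponds to the symmetric forms vanishing on $U$, whose dimension I would read off together with $\dim\Sp(n,\CC)=\binom{n+1}{2}$. The delicate point is that the symplectic type of the generic $U$, measured by the radical $U\cap U^{\perp}$, behaves differently according to whether $m\leq n/2$ or $m\geq n/2$; carrying out this case analysis, and verifying it against small cases, is what produces the two-part formula. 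I would be especially careful here, since the naive computation via $\dim X-\dim S$ is only valid when the generic fibre of $\vp$ is a single orbit, and the split at $m=n/2$ is precisely where that stability can fail.
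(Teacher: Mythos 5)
Your treatments of \eqref{cond:spsubmersion}, \eqref{cond:spcrit} and \eqref{cond:splift} coincide with the paper's: the same formula for $d\vp_{(A)}$, the same kernel obstruction when $\rank(A)<m-1$, the same determinantal codimension count and rank-realization of the discriminant, and the same equivariance $\vp(AB^T)=B\vp(A)B^T$ fed into Lemma \ref{lemma:grouplift}; the surjectivity step you leave as ``an explicit choice of the free entries of $M$'' is exactly what the paper isolates as Lemma \ref{lemma:rankconstruction}. The genuine gap is in \eqref{cond:spvfs} for $m$ odd. Your plan to read the answer off the J\'ozefiak--Pragacz resolution conflates two different modules: the first syzygies of $I=(P_1,\dots,P_m)$ are the relations $\sum_i a_iP_i=0$, which by the Buchsbaum--Eisenbud structure of this Gorenstein codimension-$3$ ideal are generated by the columns of the generic skew matrix itself; they are not derivations of $\calos$, and $\Der_S(-\log\Delta)=\{\eta:\eta(I)\subseteq I\}$ is not a term of, nor directly read off from, that resolution. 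Nor is there a result to cite: \cite[\S3.3]{goryunovmond} (not \S3.2, which is the symmetric case) concerns only the Pfaffian hypersurface, i.e.\ $m$ even, and says that the $\gl(m,\CC)$-fields \emph{generate} $\Der_S(-\log \Pf)$ --- note that this hypersurface is not a linear free divisor for $m\geq 4$, its singular locus having codimension $6$. For odd $m$ the paper supplies a genuine extra argument for which your proposal has no substitute: it constructs, for each $i,j$, a linear field $\xi_{ij}$ from the action with $\xi_{ij}(P_k)=\delta_{ik}P_j$, uses these to correct an arbitrary $\eta'\in\Der_S(-\log\Delta)$ to an $\eta$ annihilating every $P_i$, extends $\eta$ to $\Sk_{m+1}(\CC)$ by pulling back its coefficients along the projection deleting the last row and column, checks that this extension annihilates the $(m+1)\times(m+1)$ Pfaffian, applies the even case there, and restricts the resulting expression back to $\Sk_m(\CC)$.

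On \eqref{cond:spstab}, your identification of the isotropy algebra with $\{S\in\Symm_n: S|_U=0\}$ via $\xi\mapsto\Omega\xi$ is correct, and cleaner than the paper's block computation at the special point whose top $m\times m$ block is the identity; but the case split you anticipate from the radical $U\cap U^{\perp}$ does not occur: $\dim\{S\in\Symm_n: S|_U=0\}=\binom{n-\dim U+1}{2}$ depends only on $\dim U=\rank(A)=\min(n,m)$, not on the symplectic type of $U$. Carried out, your method yields $\frac12\bigl((n-m)^2+(n-m)\bigr)$ uniformly for $2\leq m\leq n+1$, i.e.\ the second branch of the stated formula for all $m$ in range. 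Be aware that this conflicts with the first branch: for $n=6$, $m=2$ your computation gives $10$, confirmed by the orbit count $\dim\Sp(6,\CC)-\dim\{(v_1,v_2): v_1^T\Omega v_2=c\}=21-11=10$, whereas $\frac12\bigl((n-m)^2+m\bigr)=9$. The discrepancy lies in the stated formula, not in your method (and it is harmless for the paper, which only uses the case $m=n+1$, where both branches give $0$); so do not contort the computation to reproduce the two-branch statement. Your caution about $\dim X-\dim S$ is also unnecessary here: the generic orbit has dimension $nm-\binom{m}{2}$, hence is dense in the generic fibre.
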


First we prove a lemma.
\begin{lemma}
\label{lemma:rankconstruction}
Let $B\in M_{m,n}$ have rank $\geq m-1$.
Then for any $C\in \Sk_m(\CC)$ there exists a $D\in M_{n,m}$ and
$E\in\Symm_m(\CC)$ such that
$C=BD+E$.
\end{lemma}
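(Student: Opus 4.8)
The plan is to reduce the existence of the decomposition $C = BD + E$ to a surjectivity statement and then verify it by passing to an adjoint. Since every $m\times m$ matrix splits uniquely into its symmetric and skew-symmetric parts, and $E$ ranges over all symmetric matrices, solving $C = BD + E$ for a given skew-symmetric $C$ is equivalent to producing $D\in M_{n,m}$ whose skew-symmetric part $\psi(D):=\tfrac12(BD - D^TB^T)$ equals $C$; one then simply takes $E = -\tfrac12(BD + D^TB^T)\in\Symm_m(\CC)$. Thus the lemma amounts to the surjectivity of the $\CC$-linear map $\psi:M_{n,m}\to\Sk_m(\CC)$.

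To prove surjectivity I would pass to the adjoint with respect to the trace pairing $\langle X,Y\rangle=\tr(X^TY)$. Because a symmetric matrix pairs trivially with a skew-symmetric one, for $C\in\Sk_m(\CC)$ we have $\langle\psi(D),C\rangle=\langle BD,C\rangle=\tr(D^TB^TC)=\langle D,B^TC\rangle$. Hence the adjoint of $\psi$ is the map $\Sk_m(\CC)\to M_{n,m}$, $C\mapsto B^TC$, and $\psi$ is surjective if and only if this map is injective.

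The heart of the matter is then the claim that $B^TC=0$ forces $C=0$ for $C\in\Sk_m(\CC)$. The equation $B^TC=0$ says that every column of $C$ lies in $\ker(B^T)\subseteq\CC^m$, a subspace of dimension $m-\rank(B)\leq 1$ by hypothesis. If $\rank(B)=m$ this kernel is trivial and $C=0$ at once. If $\rank(B)=m-1$, all columns of $C$ are multiples of a single vector, so $\rank(C)\leq 1$; but a skew-symmetric matrix has even rank, whence $\rank(C)=0$ and $C=0$. I expect the only genuine subtlety to be this $\rank(B)=m-1$ case: the naive image $\{BD\}$ of $D\mapsto BD$ is off by a codimension, and it is precisely the skew-symmetry of $C$ — equivalently the evenness of the rank of a skew-symmetric form, the phenomenon underlying the Pfaffian — that compensates and closes the argument.
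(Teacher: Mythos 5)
Your proof is correct, but it takes a genuinely different route from the paper's. You recast the lemma as surjectivity of the linear map $\psi(D)=\tfrac{1}{2}(BD-D^TB^T)$ from $M_{n,m}$ onto $\Sk_m(\CC)$ and then dualize with respect to the trace pairing: the adjoint is $C\mapsto B^TC$, and it is injective on $\Sk_m(\CC)$ because $B^TC=0$ forces every column of $C$ into $\ker(B^T)$, which has dimension at most one by the rank hypothesis, so $\rank(C)\leq 1$ and hence $C=0$ since skew-symmetric matrices have even rank. The paper instead argues directly and constructively: when $\rank(B)=m-1$ it picks $v\notin\mathrm{Image}(B)$, writes each column of $C$ as an element of $\mathrm{Image}(B)$ plus a multiple of $v$ to get $C=BA+vw^T$, splits $w=Bu+\lambda v$, and rearranges to exhibit the explicit solution $D=A-uv^T$, $E=Buv^T+(Buv^T)^T+\lambda vv^T$. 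Two points of comparison are worth noting. First, the paper's construction never uses the skew-symmetry of $C$ at all, so it actually proves the statement for arbitrary $C\in M_{m,m}$; this is equivalent to the lemma (absorb the symmetric part of $C$ into $E$), but it shows that your closing remark---that skew-symmetry is what closes the argument---is a feature of your dual formulation, where the even-rank phenomenon genuinely carries the content, rather than of the statement itself. Second, your argument is shorter and more conceptual but non-constructive, and it silently uses two standard facts that deserve a sentence over $\CC$: the trace form is a nondegenerate symmetric \emph{bilinear} (not Hermitian) form whose restriction to $\Sk_m(\CC)$ remains nondegenerate because $\Symm_m(\CC)$ and $\Sk_m(\CC)$ are orthogonal to each other, and for nondegenerate bilinear forms one still has $\dim U^{\perp}=\dim W-\dim U$, so that vanishing of the perpendicular of the image is equivalent to surjectivity; the existence of isotropic vectors over $\CC$ causes no trouble in this argument. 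What the paper's approach buys is an explicit pair $(D,E)$; what yours buys is a transparent explanation of exactly where the hypotheses enter.
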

\begin{proof}
The $\rank(B)=m$ case is clear, with $E=0$.
Let $\rank(B)=m-1$, and
let $v\notin \mathrm{Image}(B)$. 
By our rank assumption, every $z\in M_{m,1}$ is the sum of
an element of $\mathrm{Image}(B)$ and a multiple of $v$.
Applying this to each column of $C$,
there exists an $A\in M_{n,m}$
and a $w\in M_{m,1}$ such that
$C=BA+vw^T$.
Now write $w=Bu+\lambda v$,
for $u\in M_{n,1}$ and $\lambda\in \CC$.
Then as required,
\begin{equation*}
C=B(A-uv^T)+\left((Buv^T)+(Buv^T)^T+\lambda vv^T\right).\qedhere
\end{equation*}
\end{proof}

\begin{proof}[Proof of \ref{lemma:spproperties}]
Differentiating $t\mapsto A+tD$ at $0$ shows that
$d\vp_{(A)}(D)=A^T\Omega D+D^T\Omega A$.

If $\rank(A)\geq m-1$, then $\rank(A^T\Omega)\geq m-1$.  Let
$C\in\Sk_m(\CC)$, and by Lemma \ref{lemma:rankconstruction}
write
$\frac{1}{2}C=A^T\Omega D+E$, where $E\in\Symm_m(\CC)$.  Then
$d\vp_{(A)}(D)=C$.
If $\rank(A)<m-1$, then let $v,w$ be two linearly independent vectors in $\ker(A)$.
Although $v^T(d\vp_{(A)}(D))w=0$ for any $D$, there exists 
$C\in\Sk_m(\CC)$ such that $v^TCw\neq 0$:
if $v=\sum v_ie_i$ and $w=\sum w_ie_i$ are expressed in terms of a basis and
$E_{ij}$ is an elementary matrix (with one nonzero entry), then
$v^T(E_{ij}-E_{ij}^T)w=v_iw_j-v_jw_i$.
Since $v,w$ are linearly independent,
this proves \eqref{cond:spsubmersion}.

The first part of \eqref{cond:spcrit} follows from
\eqref{cond:spsubmersion} and linear algebra.
If $A\in C(\vp)$, then $\rank(A^T\Omega A)< m-1$, and hence
the discriminant $\Delta$
is contained in the claimed set.

The converse will follow by showing that if $C\in \Sk_m(\CC)$ has 
rank $2r$, then there exists an $A\in M_{n,m}$ of rank $2r$ with
$\vp(A)=C$.
By the standard form of
skew-symmetric bilinear forms, there exists a $K\in\GL(m,\CC)$
such that $K^TCK$ is block diagonal, with $r$ blocks of the form
$J=\begin{pmatrix} 0 & 1\\ -1 & 0 \end{pmatrix}$ and the remainder 
zero.
There exists a permutation matrix $P$ such that $P^T\Omega P$ is block
diagonal, with $\frac{n}{2}$ copies of $J$.
Let $B\in M_{n,m}$ be zero, except with a copy of the identity in
the upper left $2r\times 2r$ submatrix
(it fits when $m=n+1$ because $m$ is then odd and hence $2r\leq n$, and also fits when $m\leq
n$).
A calculation shows that
$B^T P^T\Omega P B=K^T C K$,
and hence $\vp(PBK^{-1})=C$ with $\rank(PBK^{-1})=2r$.

For \eqref{cond:spvfs}, since this action preserves all rank varieties in $S$,
such vector fields are in $\Der_S(-\log \Delta)$.
When $m$ is even, then $\Delta$ is defined by the Pfaffian $\Pf$.
By a free resolution due to J\'ozefiak--Pragacz 
(see \cite[\S3.3]{goryunovmond}),
the vector fields from the action generate
$\Der_S(-\log \Pf)$. 

When $m$ is odd, then $\Delta$ is defined by the ideal
$I=(P_1,\ldots,P_m)$,
where $P_i$ is the Pfaffian after deleting row $i$
and column $i$.
Let $\eta'\in \Der_S(-\log \Delta)$,
so that $\eta'(P_i)\in I$ for all $i$.
Calculations show that for all $i,j$, there exists a linear vector field
$\xi_{ij}$ coming from the action such that $\xi_{ij}(P_k)$
is $0$ if $i\neq k$ and $P_j$ if $i=k$.
%
%
%
%
An appropriate linear combination added to
$\eta'$ will produce an $\eta$ which annihilates
each of $P_1,\ldots,P_m$.
Let $\pi:\Sk_{m+1}(\CC)\to \Sk_m(\CC)$ be the projection which deletes
the last row and column.
If $\eta=\sum_{1\leq i<j\leq m} \alpha_{ij} \frac{\partial}{\partial
x_{ij}}$, then
let $\tilde{\eta}=\sum_{1\leq i<j\leq m} \alpha_{ij}\circ
\pi\frac{\partial}{\partial x_{ij}}$ be a vector field on
$\Sk_{m+1}(\CC)$.
A calculation shows that $\tilde{\eta}$ must annihilate
$\Pf$ on $\Sk_{m+1}(\CC)$.
By the even case, $\tilde{\eta}$ may be written in terms of
the linear vector fields coming from the
$\GL({m+1},\CC)$ action on $\Sk_{m+1}(\CC)$.
Since $\tilde{\eta}$ does not depend on
the last column,
the coefficients of the linear vector fields
may be restricted to functions on $\Sk_{m}(\CC)$
and the corresponding linear vector fields on $\Sk_m(\CC)$ used,
to express $\eta$ in terms of the linear vector fields coming from the
$\GL(m,\CC)$ action.
This proves \eqref{cond:spvfs}, and \eqref{cond:splift} follows just
as for Lemma \ref{lemma:onproperties}\eqref{cond:lift}.

For \eqref{cond:spstab}, the Lie algebra of the isotropy subgroup at
$P=\begin{pmatrix} I_m \\ 0 \end{pmatrix}$ when $m\leq n$,
or at
$P=\begin{pmatrix} I_n & 0 \end{pmatrix}$ when $m=n+1$,
is straightforward to compute by considering the 
cases $1\leq m\leq \frac{n}{2}$, $\frac{n}{2}\leq m\leq n$, and
$m=n+1$.
\end{proof}
%
%
When $m=n+1$, by Lemma \ref{lemma:spproperties},
Proposition \ref{prop:coregulart1cm}, and Theorem
\ref{mthm}, we have 

\begin{proposition}
Let $n$ be even.
Let $G=\Sp(n,\CC)\subseteq \GL(n,\CC)$ act on $X=M_{n,n+1}$ by multiplication on the left,
and let
$\vp:X\to S\slashslash G\cong \Sk_{n+1}(\CC)$ be the quotient map.
Let $\Delta$ be as in Lemma \ref{lemma:spproperties}.
If $f$ defines a free divisor in $S$ 
for which $\Der_S(-\log f)\subseteq \Der_S(-\log \Delta)$,
then $f\circ \vp$ defines a free divisor in $M_{n,n+1}$.
\qed
\end{proposition}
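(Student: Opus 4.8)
The plan is to verify the two hypotheses of Theorem \ref{mthm} for the lift of $f$ across $\vp$, exactly as in the proof of Proposition \ref{prop:on} for the orthogonal group. The entire argument is a specialization of Lemma \ref{lemma:spproperties} to the value $m=n+1$, together with an appeal to Proposition \ref{prop:coregulart1cm}.

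First I would check condition \eqref{mthm.t1xs}, that $T^1_{X/S}$ is Cohen--Macaulay of codimension $2$. Substituting $m=n+1$ into Lemma \ref{lemma:spproperties}\eqref{cond:spcrit} shows that the critical locus $C(\vp)$ has codimension $2(n-m+2)=2$, so $\vp$ is smooth outside a set of codimension $2$. Substituting $m=n+1$ into the second branch of \eqref{cond:spstab}, which is the applicable one since $\tfrac{n}{2}\le n+1$, gives generic stabilizer dimension $\tfrac{1}{2}\bigl((n-m)^2+n-m\bigr)=\tfrac{1}{2}(1-1)=0$, so the generic stabilizer is finite. With the critical locus of codimension $2$ and a finite generic stabilizer in hand, Proposition \ref{prop:coregulart1cm} applies directly and yields that $T^1_{X/S}$ is a Cohen--Macaulay $\calox$--module of codimension $2$.

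Next I would check condition \eqref{mthm.ks}, the vanishing of the Kodaira--Spencer map on $\Der_S(-\log f)$. By hypothesis $\Der_S(-\log f)\subseteq \Der_S(-\log \Delta)$, and by Lemma \ref{lemma:spproperties}\eqref{cond:splift} every vector field in $\Der_S(-\log \Delta)$ lifts across $\vp$; hence every element of $\Der_S(-\log f)$ lifts, which is exactly the statement that $\delta_{KS}$ vanishes on $\Der_S(-\log f)$. With both hypotheses verified, Theorem \ref{mthm} gives that $f\circ\vp$ defines a free divisor in $M_{n,n+1}$.

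I expect no genuine obstacle here, since all the substantive work---the rank criterion for submersivity, the identification of $\Delta$ as the locus $\{\rank<m-1\}$, the construction of lifts of $\Der_S(-\log\Delta)$ from the $\GL(m,\CC)$--action, and the stabilizer dimension count---was already carried out in Lemma \ref{lemma:spproperties}. The only point needing attention is purely numerical: confirming that the choice $m=n+1$ lands in the correct branch of the case distinction in \eqref{cond:spstab} and makes the codimension in \eqref{cond:spcrit} exactly $2$, as these are precisely the two facts that render Proposition \ref{prop:coregulart1cm} applicable.
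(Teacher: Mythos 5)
Your proposal is correct and follows exactly the paper's own route: the paper derives this proposition directly from Lemma \ref{lemma:spproperties}, Proposition \ref{prop:coregulart1cm}, and Theorem \ref{mthm}, checking precisely the two conditions you verify (codimension-$2$ critical locus plus finite generic stabilizer for \eqref{mthm.t1xs}, and liftability of $\Der_S(-\log \Delta)\supseteq\Der_S(-\log f)$ for \eqref{mthm.ks}). Your numerical checks at $m=n+1$ are also exactly the ones required.
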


\bibliographystyle{amsalpha}
\bibliography{refs}

\end{document}